\renewcommand\to{\mathchoice{\longrightarrow}{\rightarrow}{\rightarrow}{\rightarrow}}
\newcommand*{\longtwoheadrightarrow}{\ensuremath{\relbar\joinrel\twoheadrightarrow}}
\newcommand{\runin}[1]{\subsection{#1}}
\definecolor{special}{HTML}{4e8cbe}
\renewcommand{\leq}{\leqslant}
\renewcommand{\geq}{\geqslant}
\newtheoremstyle{main}
{}
{}
{\itshape}
{0pt}
{}
{}
{ }
{\textbf{\thmname{#1} \thmnumber{#2}}\thmnote{{ {(#3)}}}\textbf{.}}
\theoremstyle{main}
\newtheorem{mtheorem}{Theorem}
\newtheoremstyle{def-custom}
{}
{}
{}
{0pt}
{}
{}
{ }
{\textbf{\thmname{#1} {\thmnumber{#2}}}\thmnote{ (\textit{#3})}\textbf{.}}
\newtheoremstyle{def-custom-it}
{}
{}
{\itshape}
{0pt}
{}
{}
{ }
{\textbf{\thmname{#1} {\thmnumber{#2}}}\thmnote{ (\textit{#3})}\textbf{.}}
\theoremstyle{def-custom-it}
\newtheorem{theorem}{Theorem}[section]
\newtheorem{proposition}[theorem]{Proposition}
\newtheorem{corollary}[theorem]{Corollary}
\newtheorem{lemma}[theorem]{Lemma}
\newtheorem{fact}[theorem]{Fact}
\theoremstyle{def-custom}
\newtheorem{definition}[theorem]{Definition}
\newtheorem{remark}[theorem]{Remark}
\newtheorem{example}[theorem]{Example}
\newtheorem{assumption}[theorem]{Assumption}
\newcommand{\RV}{\mathrm{RV}}
\newcommand{\rv}{\operatorname{rv}}
\newcommand{\res}{\operatorname{res}}
\newcommand{\val}{v}
\newcommand{\residue}[1]{k_{#1}}
\newcommand{\valuegroup}[1]{\Gamma_{#1}}
\newcommand{\ac}{\operatorname{ac}}
\renewcommand{\O}{\mathcal{O}}
\newcommand{\m}{\mathfrak{m}}
\newcommand{\Wit}{\mathrm{W}}
\newcommand{\smax}[1]{\overline{#1}}
\newcommand{\invmax}[1]{\smax{#1}^{\mathrm{inv}}}
\newcommand{\pseudoconverges}{\implies}
\newcommand{\completion}[1]{\widehat{#1}}
\newcommand{\Frac}{\operatorname{Frac}}
\newcommand{\Res}[1]{\overline{#1}}
\newcommand{\Val}[1]{{#1}_{\Gamma}}
\newcommand{\Rv}[1]{{#1}_{\rv}}
\newcommand{\End}{\operatorname{End}}
\newcommand{\primary}{\mathrm{FE}}
\newcommand{\FE}[1]{\mathrm{FE}(#1)}
\newcommand{\inv}[1]{#1^{\mathrm{inv}}}
\newcommand{\core}[1]{{#1}_{\mathrm{inv}}}
\newcommand{\ext}[2]{#1\langle#2\rangle}
\newcommand{\extnot}[2]{#1[#2\rangle}
\newcommand{\polring}[1]{#1[X]_{\sigma}}
\newcommand{\shifted}[1]{\mathcal{S}(#1)}
\renewcommand{\L}{\mathfrak{L}}
\newcommand{\WTH}{\mathrm{hVFE}}
\newcommand{\WTHzero}{\WTH_{0}}
\newcommand{\WTHzeroac}{\WTHzero^{\mathrm{ac}}}
\newcommand{\WTHzerosl}{\WTHzero^{s,\iota}}
\newcommand{\Tsep}{\primary^{\lambda}}
\newcommand{\Lringsigma}{\L_{\mathrm{ring},\sigma}}
\newcommand{\Loagsigma}{\L_{\mathrm{oag},\sigma}}
\newcommand{\Tvdf}{\mathrm{VFE}}
\newcommand{\Tvdfzero}{\mathrm{VFE}_0}
\newcommand{\Lvfe}{\L_{\mathrm{vfe}}}
\newcommand{\Lsep}{\Lvfe^{\lambda}}
\newcommand{\Lsepenrich}{\Lvfe^{\lambda,+}}
\newcommand{\LK}{\Lringsigma}
\newcommand{\LKsep}{\LK^{\lambda}}
\newcommand{\Lsorts}{\L_{\RVsort,\valuesort}}
\newcommand{\Lsepac}{\Lvfe^{\lambda,\mathrm{ac}}}
\newcommand{\Lseclift}{\Lvfe^{\lambda,s,\iota}}
\newcommand{\denseacvf}{\mathrm{ACVF}_{\mathrm{dense}}}
\newcommand{\LDP}{\L_{\mathrm{DP}}}
\newcommand{\LDPlambda}{\L^*_{\mathrm{DP}}}
\newcommand{\Lringpairs}{\L_{\mathrm{ring,P}}^{\lambda}}
\newcommand{\Th}{\operatorname{Th}}
\newcommand{\sort}[1]{\mathbf{#1}}
\newcommand{\model}[1]{#1}
\newcommand{\RVsort}{\sort{RV}}
\newcommand{\mainsort}{\sort{VF}}
\newcommand{\residuesort}{\sort{k}}
\newcommand{\valuesort}{\sort{\Gamma}}
\newcommand{\id}{\mathrm{id}}
\renewcommand{\mathbb}[1]{\mathds{#1}}
\newcommand{\nonsurj}{${\lozenge}$}
\title{Model theory of valued fields with an endomorphism}
\author{Simone Ramello}
\begin{document}

\maketitle

\begin{abstract}
    We establish relative quantifier elimination for valued fields of residue characteristic zero enriched with a non-surjective valued field endomorphism, building on recent work of Dor and Halevi. In particular, we deduce relative quantifier elimination for the limit of the Frobenius action on separably closed valued fields of positive characteristic.
\end{abstract}

\section{Introduction}
\runin{Valued fields}
For the model-theoretically inclined, understanding valued fields has historically been a game of finding the correct invariants for their theories, reducing the possibly complicated valuational structure to something (at least a priori) easier to understand and classify. Given a valued field $(K,\val)$, one can naturally look at its value group $\valuegroup{K}$ and residue field $\residue{K}$ as possible (model-theoretic) invariants, in the sense that one could hope to determine the theory of $(K,\val)$ using the theories of $\valuegroup{K}$ and $\residue{K}$. This concretized in the celebrated Ax-Kochen/Ershov results (\cite{ax1965diophantine}, \cite{ershov1965elementary}), which establish that -- when $(K,\val)$ is henselian, and both $K$ and $\residue{K}$ have characteristic zero -- the theory of $(K,\val)$ is implied by the theories of $\valuegroup{K}$ (as an ordered group) and $\residue{K}$ (as a field).

\runin{Endomorphisms of valued fields} A great amount of work has since then blossomed, which seeks to generalize similar results in various directions. We tackle one such possible way of making the question harder, namely we enrich the structure of $(K,\val)$ with a valued field endomorphism $\sigma$. Beyond the abstract interest for such structures, we note that they arise naturally in several interesting contexts: for example, if we let $(K_p,\val_p)$ be either a separably closed or an algebraically closed valued field of characteristic $p$ for every $p$, then we can expand one language of valued fields of our choice to be able to consider the structures $(K_p, \val_p, \phi_p)$, where $\phi_p$ is a unary function that we interpret to be $\phi_p(x) = x^p$. If we take the ultraproduct along some non-principal ultrafilter $\mathcal U$ over the set of prime numbers, then, we end up with an algebraically closed valued field of residue characteristic zero equipped with some (possibly non-surjective) endomorphism that acts as an infinite non-standard Frobenius. These structures are now completely understood, first by work of Durhan\footnote{Published under the surname Azgın.} (\cite{azgin2010valued}), and then later by work of Dor and Hrushovski (\cite{dor2022specialization}) and Dor and Halevi (\cite{dor2023contracting}), building on the twisted Lang-Weil estimates (\cite{nonstandardfrob}, \cite{shuddhodan2021hrushovski}). These endomorphisms of valued fields are of a special kind, namely \textit{$\omega$-increasing}. At the other end of the spectrum sits the Witt-Frobenius, whose model theory we also understand thanks to \cite{belair2007model} and \cite{azgin2011elementary}. There, the endomorphism is surjective, so it is an automorphism; moreover, it is not just an automorphism of valued fields, but it is in fact valuation preserving, i.e. an \textit{isometry}. In full generality, Durhan and Onay in \cite{onay2013quantifier} prove a relative quantifier elimination for a natural class of valued fields with an automorphism, showing that their theories can be understood in terms of their leading terms structures $\RV$ together with the induced automorphism.

\runin{Beyond surjectivity} The most recent work of Dor and Halevi (\cite{dor2023contracting}) takes a step in the direction of removing the assumption of surjectivity of the endomorphism in question. They do so in the \textit{absolute} case, namely identifying the model companion for valued fields enriched with an $\omega$-increasing endomorphism. In turn, their work is based on the seminal results of Chatzidakis and Hrushovski (\cite{chatzidakis2004model}), which isolate the model companion for fields with an endomorphism. Building on their work, and that of Durhan and Onay, we remove the surjectivity assumption in the \textit{relative} case, proving a relative quantifier elimination down to the leading term structure.
\par On the difference side, following \cite{dor2023contracting} we need to work with models of $\primary$, namely difference fields $(K,\sigma)$ where $\sigma(K)$ is relatively algebraically closed in $K$. This is a technical assumption, which is clarified in \cref{FE-explained}.

\begin{mtheorem}[{\cref{embedding-theorem}}]
    Let $\WTHzero$ be the theory of non-inversive weakly $\sigma$-henselian valued difference fields $(K,\val,\sigma)$ of residue characteristic zero, with $\sigma(K)$ relatively algebraically closed in $K$. Then, modulo $\WTHzero$, every formula is equivalent to one where the quantifiers only range over variables from $\RVsort$ and $\valuesort$.
\end{mtheorem}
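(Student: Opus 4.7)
The plan is to prove this by the standard route for relative quantifier elimination: reduce to an embedding theorem, and prove the embedding theorem by a back-and-forth / step-by-step extension of partial isomorphisms. Concretely, one fixes $M,N \models \WTHzero$ with $N$ sufficiently saturated, a small $\Lvfe$-substructure $A \subseteq M$, and a partial $\Lvfe$-embedding $f\colon A \to N$ whose restriction to the $\RVsort$ and $\valuesort$ sorts is $\L_{\RVsort,\valuesort}$-elementary; the goal is to extend $f$ to an $\Lvfe$-embedding of $M$ into $N$ while preserving elementarity on $\RVsort \cup \valuesort$. Standard model-theoretic folklore then packages this into quantifier elimination down to $\RVsort$ and $\valuesort$.

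First I would reduce to the case that $A$ is closed inside $M$ under all "harmless" operations: in particular, that $A$ itself models $\primary$ (i.e.\ $\sigma(A)$ is relatively algebraically closed in $A$), that $A$ is weakly $\sigma$-henselian, and that $A$ inherits the correct $\RVsort$/$\valuesort$ data from $M$. This closure step is what the $\primary$ hypothesis is designed to make possible without introducing uncontrolled new leading-term data, and it is exactly what allows the strategy of Dor--Halevi to be imported here. Saturation of $N$ and the assumed elementarity on $\RVsort \cup \valuesort$ produce the corresponding closure on the $N$ side.

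The core of the argument is then to extend $f$ by one element $a \in M \setminus A$ at a time, splitting into the classical three cases: (i) $a$ generates an immediate extension of $A$ (no new $\RVsort$ or $\valuesort$ data); (ii) $a$ contributes genuinely new data in $\RVsort$ or $\valuesort$; (iii) $a$ is $\sigma$-algebraic over $A$ but not in $A$ (so the $\primary$-closure must be taken and the map extended along it). Case (ii) is handled by pulling back a realization from $N$ using saturation and the elementarity hypothesis on $\RVsort \cup \valuesort$, coupled with the fact that $\RVsort$ controls the valuation ring up to units. Case (iii) is exactly the case ruled out on $A$ itself by the $\primary$ assumption, so one enlarges $A$ once and returns to (i) or (ii). After each extension step, one re-closes under $\primary$ and weak $\sigma$-henselianity and iterates; this invariant is what makes the recursion terminate at $M$.

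I expect the main obstacle to lie in case (i), the immediate extensions, which is where weak $\sigma$-henselianity is doing real work: one must show that any pseudo-convergent $\sigma$-sequence from $A$ with an appropriate $\sigma$-polynomial condition admits a matching pseudo-limit in $N$, and adapt the Durhan--Onay pseudo-convergence machinery from the inversive/automorphism setting to the non-inversive one. The absence of a $\sigma^{-1}$ means that classical maneuvers which shift indices have to be replaced by arguments that only use $\sigma$ going forward, and the $\primary$-condition must be invoked to prevent spurious algebraic closures appearing in the $\sigma$-residues. Once this technical heart is in place, cases (ii) and (iii) are comparatively routine reductions in the style of Dor--Halevi, and the bookkeeping — ensuring that the $\RVsort \cup \valuesort$-elementarity of $f$ is preserved at every stage — follows from careful use of saturation together with the $\primary$-closure.
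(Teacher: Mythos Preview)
Your high-level strategy is correct and matches the paper's: reduce to an embedding lemma via Morleyization on $\RVsort \cup \valuesort$, then extend partial embeddings step by step through value-group, residue-field, and immediate extensions. However, you are missing the central technical thread of the non-inversive setting, and your description of the immediate case skips the actual difficulty.

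The point you do not mention is that $\WTHzero$ is formulated in the language $\Lsep$ containing $\lambda$-functions that encode $\sigma(K)$-linear (in)dependence; consequently, an $\Lsep$-substructure $A \leq K$ is precisely a valued difference subfield with $A \subseteq K$ \emph{$\sigma$-separable}, and every extension step must preserve this. This is not a side condition: it drives the whole proof. When extending the value group, the paper does not simply pick any element of the right value; it picks a \emph{generic} lying in the inversive core $\core{K}$, so that $\ext{A}{b} \subseteq K$ remains $\sigma$-separable (\cref{generics-exist}), and must then repair the residue field via a residually inversive hull (\cref{inversive-lift}) because adding a generic can break inversivity of $\residue{A}$. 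Your case (ii) glosses over all of this.

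In the immediate case, the obstacle is not adapting Durhan--Onay to forward-only $\sigma$; that part is handled by passing to $\inv{A}$ and back. The real issue is: given $x_1 \in A'$ that is $\sigma$-transcendental over $A$ with $A \subseteq \extnot{A}{x_1}$ immediate, you must find $y_1 \in L$ which is simultaneously a pseudolimit of the same pseudo-Cauchy sequence \emph{and} transcendental over $A \otimes_{\sigma(A)} \sigma(L)$, so that $\extnot{f(A)}{y_1} \subseteq L$ is $\sigma$-separable. The paper achieves this in two steps. First, after replacing $A$ by $\smax{A}$ (its $\sigma$-separably $\sigma$-algebraically maximal immediate extension, unique by \cref{separable-kaplansky}), the dependent-defect lemma (\cref{dependent-defect}) forces the approximating pseudo-Cauchy sequence to be of $\sigma$-transcendental type. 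Second, the set of pseudolimits in $L$ contains an open ball, while the set of elements transcendental over $f(A) \otimes_{\sigma(f(A))} \sigma(L)$ is dense, so their intersection is nonempty. Neither step appears in your sketch, and without them the immediate case does not go through. Finally, your case (iii) misreads $\primary$: it says $\sigma(A)$ is relatively algebraically closed in $A$, not that $A$ is $\sigma$-algebraically closed in $K$; there is no separate ``$\sigma$-algebraic but not in $A$'' case beyond what is already absorbed by passing to $\smax{A}$.
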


\runin{Consequences} Once relative quantifier elimination using $\RV$ is established, one can immediately deduce a series of transfer theorems for models of $\WTHzero$ in a variety of languages.

\begin{mtheorem}[\cref{relative-qe-ac} and \cref{relative-qe-sl}]
    Let $\WTHzeroac$ (resp. $\WTHzerosl$) be the theory of non-inversive weakly $\sigma$-henselian valued difference fields $(K,\val,\sigma)$ of residue characteristic zero, with $\sigma(K)$ relatively algebraically closed in $K$, which come equipped with a $\sigma$-equivariant angular component (resp. a section of the valuation and a lift of the residue field). Then, modulo $\WTHzeroac$ (resp. $\WTHzerosl$), every formula is equivalent to one where the quantifiers only range over variables from $\residuesort$ and $\valuesort$.
\end{mtheorem}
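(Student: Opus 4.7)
The plan is to bootstrap Theorem A (relative quantifier elimination down to $\RVsort$ and $\valuesort$) by reducing every $\RVsort$-quantifier to a pair of quantifiers over $\residuesort$ and $\valuesort$. The classical tool is the short exact sequence
\[
1 \longrightarrow \residue{K}^\times \longrightarrow \RV(K) \longrightarrow \valuegroup{K} \longrightarrow 0,
\]
which becomes definably split as soon as one has an angular component map, and remains $\sigma$-equivariantly split provided that $\ac$ itself is $\sigma$-equivariant. Once such a splitting is in place, every atomic formula on $\RVsort$ becomes equivalent to a quantifier-free combination of atomic formulas on $\residuesort$ and $\valuesort$, so the conclusion follows formally from Theorem A by substituting the translated $\RVsort$-formulas into the normal form it produces.

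For $\WTHzeroac$, the $\sigma$-equivariant angular component induces a $\sigma$-equivariant bijection
\[
\RV(K)\setminus\{0\} \; \longrightarrow \; \residue{K}^\times \times \valuegroup{K}, \qquad \rv(x) \longmapsto \bigl(\ac(x),\val(x)\bigr),
\]
which is quantifier-free definable using the symbols $\ac$, $\rv$, $\val$, and the group operations on $\residuesort$ and $\valuesort$. The element $0 \in \RVsort$ is handled as an isolated case. Under this bijection, the multiplicative structure on $\RV$ transports to the obvious componentwise operation, while the (partial) addition on $\RV$ transports to a quantifier-free definable relation on $\residue{K}^\times \times \valuegroup{K}$ expressible via $\ac$ and $\val$. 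Substituting this translation into the Theorem A normal form yields the first relative quantifier elimination.

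For $\WTHzerosl$, a $\sigma$-equivariant section $s : \valuegroup{K} \to K^\times$ of $\val$ together with a $\sigma$-equivariant lift $\iota : \residue{K} \hookrightarrow K$ produces a $\sigma$-equivariant angular component by $\ac(x) = \res\bigl(x \cdot s(\val(x))^{-1}\bigr)$, so the previous paragraph applies directly. Equivalently, the map $(a,\gamma) \mapsto \rv\bigl(\iota(a)\cdot s(\gamma)\bigr)$ gives a $\sigma$-equivariant quantifier-free definable bijection between $\residue{K}^\times\times\valuegroup{K}$ and $\RV(K)\setminus\{0\}$ in the expanded language, and the same substitution argument goes through.

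The only delicate point is the bookkeeping: one must check that the transported addition on $\residue{K}^\times\times\valuegroup{K}$ is genuinely quantifier-free in the combined language of $\residuesort$ and $\valuesort$ (it is, because the two summands are either comparable in valuation -- in which case the result is read off from the dominant summand -- or equal in valuation, in which case the answer is computed in $\residue{K}$ via $\ac$), and that $\sigma$-equivariance of $\ac$ (resp. of $s$ and $\iota$) is really what makes the splitting commute with $\sigma$ on the nose. Beyond these routine verifications the argument is entirely formal, so both statements should drop out of Theorem A together with the decomposition of $\RVsort$ described above.
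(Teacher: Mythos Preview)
Your treatment of $\WTHzeroac$ is correct and matches the paper's route. The paper makes one step explicit that you leave implicit: rather than directly ``substituting into the Theorem~A normal form,'' it first passes to the intermediate expansion $\Lsepenrich = \Lsep \cup \{\ac_{\rv}\colon \RVsort \to \RVsort\}$, invokes the \emph{resplendency} of Theorem~A to obtain relative QE there, and then uses the (quantifier-free) bi-interpretation between $(\RVsort,\ac_{\rv})$ and $\residuesort^\times \times \valuesort$ to transfer the result to $\Lsepac$. Your sketch conflates the languages $\Lsep$ and $\Lsepac$ (e.g.\ $\rv$ is not a symbol of $\Lsepac$, and $\ac$ is not a symbol of $\Lsep$), but once you route the argument through $\Lsepenrich$ this is exactly what you describe.

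For $\WTHzerosl$, however, your reduction to $\WTHzeroac$ does not close the gap. The issue is that $s\colon \valuesort \to \mainsort$ and $\iota\colon \residuesort \to \mainsort$ are maps \emph{into} the main sort, so they are not enrichments of $\RVsort$ and $\valuesort$ and hence are not covered by the resplendency of Theorem~A. Defining $\ac$ from $s$ gives you relative QE only for the $\Lsepac$-fragment of $\Lseclift$; but an arbitrary $\Lseclift$-formula can mention main-sort terms like $s(\gamma)$ or $\iota(a)$ directly (e.g.\ the formula $x = \iota(a)$ pins down a specific field element, which no $\Lsepac$-formula can do), and you have not explained why such formulas admit $\mainsort$-quantifier elimination. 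The paper does not claim this follows from the $\WTHzeroac$ case either: it cites a separate argument (from \cite{kesting}) in which the embedding lemma is re-run in the richer language, taking care that partial embeddings respect the given section and lift. Your ``same substitution argument goes through'' is precisely the place where a genuine additional argument is required.
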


From this we can deduce a transfer theorem for $\mathrm{NTP}_2$ along the strategy introduced in \cite{chernikov2014valued}. In particular, this gives a different proof of \cite[Corollary 8.6]{dor2023contracting}, namely that the theory that \cite{dor2023contracting} call $\widetilde{\mathrm{VFE}}_0$ is $\mathrm{NTP}_2$.

\begin{mtheorem}[\cref{ntp2-rv}]
    Let $(K,\val,\sigma) \vDash \WTHzero$. Then $(K,\val,\sigma)$ is $\mathrm{NTP}_2$ if and only if $(\RV_K,\Rv{\sigma})$ is $\mathrm{NTP}_2$.
\end{mtheorem}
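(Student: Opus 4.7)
The plan is to transport NTP$_2$ between the two structures via the relative quantifier elimination of \cref{embedding-theorem}, mirroring the strategy of Chernikov--Hils (\cite{chernikov2014valued}) in the endomorphism-free case.

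The forward direction is immediate: $(\RV_K, \Rv{\sigma})$ is a $\sigma$-invariant definable quotient of $(K^\times, v, \sigma)$, hence an interpretable reduct of $(K, v, \sigma)$ with its full induced structure, and NTP$_2$ is preserved under such reducts.

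For the converse, suppose $(K, v, \sigma) \not\models \mathrm{NTP}_2$ and fix an inp-pattern $(\phi_i(x; y_i), (b_{i,j})_{j<\omega}, k_i)_{i<\omega}$, with all variables in $\mainsort$ (other sorts being absorbed as interpretable quotients). Applying \cref{embedding-theorem} and routine reductions (disjunctive normal form plus Ramsey-extraction on each row) we may assume each $\phi_i(x; y_i)$ has the shape $\psi_i\bigl(\rv(t_{i,1}(x, y_i)), \ldots, \rv(t_{i,n_i}(x, y_i))\bigr)$ for $\sigma$-polynomials $t_{i,l}$ over a small submodel and an $\RVsort \cup \valuesort$-formula $\psi_i$. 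For each $\eta: \omega \to \omega$ pick a column witness $a_\eta$ and set $c_{\eta,i} := (\rv(t_{i,l}(a_\eta, b_{i,\eta(i)})))_l$; these tuples witness the satisfiability of $\psi_i$ in $\RV_K$, while the row $k_i$-inconsistencies transfer automatically, since the truth value of $\phi_i$ depends only on the $\rv$-data of the substituted VF-tuples. The resulting pattern in $(\RV_K, \Rv{\sigma})$ contradicts its being NTP$_2$.

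The main obstacle is the clean conversion of the $\rv$-terms $\rv(t_{i,l}(x, y_i))$ into an honest inp-pattern in $\RVsort$: the $\sigma$-polynomial $t_{i,l}$ intertwines $x$ and $y_i$, so one cannot naively replace $x$ by a fresh $\RVsort$-variable. The remedy, as in \cite{chernikov2014valued}, is to regard the tuple-valued map $x \mapsto \bigl(\rv(t_{i,l}(x, b_{i,j}))\bigr)_l$ as a definable function from $\mainsort$ to $\RVsort$, and to run the inp-pattern argument on its image, treating the corresponding tuple of $\rv$-imaginaries as the genuine free $\RVsort$-variable in the transferred pattern. Once \cref{embedding-theorem} is available, the adaptation of the Chernikov--Hils bookkeeping to $\sigma$-polynomials over $\WTHzero$ is mechanical.
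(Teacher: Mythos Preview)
Your converse has a genuine gap. After relative quantifier elimination each $\phi_i(x;y_i)$ is equivalent to $\psi_i\bigl(\rv(t_{i,1}(x,y_i)),\ldots,\rv(t_{i,n_i}(x,y_i))\bigr)$ with $\psi_i$ an $\RVsort$-formula, but the $\mainsort$-terms $t_{i,l}$ (which in $\Lsep$ are built with the $\lambda$-functions, not just $\sigma$-polynomials) mix $x$ and $y_i$ inextricably: $\rv(t(x,y))$ is not a function of $\rv(x)$ and $\rv(y)$. There is therefore no tuple of $\RVsort$-variables that can serve as the \emph{common} object variable of an inp-pattern in $(\RV_K,\Rv{\sigma})$. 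Your remedy takes $c_{\eta,i}$ as that variable, but it depends on the row index $i$, so the rows no longer share a free variable; and your claim that row-inconsistency ``transfers automatically'' is the wrong way round: $k_i$-inconsistency in $\mainsort$ only says that no $x\in K$ realises $k_i$ instances of row $i$, which does not rule out an abstract $\RVsort$-tuple satisfying the corresponding $\psi_i$'s, since most such tuples are not of the form $(\rv(t_{i,l}(x,b_{i,j})))_l$ for any $x$. The ``Chernikov--Hils bookkeeping'' you invoke is not a direct push-forward of the pattern to $\RVsort$, and the sentence acknowledging this as ``the main obstacle'' does not actually resolve it.

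The paper supplies the missing idea. One first proves that every \emph{quantifier-free} formula in the full language is NIP modulo $\Tvdfzero$ (\cref{lemma-nip}): replace $\sigma$ by the predicate for $\sigma(K)$ together with the pair-$\lambda$-functions, embed the resulting structure into a model of $\denseacvf$ via \cref{embed-in-acvf}, and use that $\denseacvf$ is NIP (\cref{acvfpairisnip}). This NIP lemma is precisely what makes the Chernikov--Hils machinery from \cite{chernikov2014valued} run: it controls the $\mainsort$-entanglement coming from the mixed terms $t_{i,l}$ along each row of the pattern, while $\mathrm{NTP}_2$ of $(\RV_K,\Rv{\sigma})$ bounds the contribution of the $\psi_i$. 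Your proposal skips this step entirely, and without it there is no mechanism to neutralise the $\mainsort$-content of the terms.
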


\runin{The road ahead} The paper is structured as follows.
\begin{itemize}
    \item \cref{preliminaries} introduces the necessary preliminaries in difference algebra and valued difference algebra, in particular focusing on the notion of $\sigma$-separability.
    \item \cref{henselianity} develops the theory of weak $\sigma$-henselianity and immediate extensions, proving the crucial \cref{separable-kaplansky}.
    \item \cref{auxiliary} takes care of the other major ingredient for an embedding lemma, namely how to extend value (difference) group and residue (difference) field.
    \item \cref{AKE} proves the embedding lemma and deduces a wealth of variants and model-theoretic consequences, including $\mathrm{AKE}$-type theorems and $\mathrm{NTP}_2$ transfer.
\end{itemize}
\section*{Acknowledgements}
First and foremost, I would like to thank my supervisors Martin Hils and Franziska Jahnke for their invaluable support, ideas, guidance, and patience as this project evolved, hit walls, and overall led us on an emotional rollercoaster. I would also like to thank the Parisian crowd, who hosted me for several wonderful weeks during which some of the core work of this paper was done: in particular Sylvy Anscombe, Zoé Chatzidakis, and Silvain Rideau-Kikuchi, three people whose deep insights are all (hopefully) fully and faithfully represented in this work. I would also like to thank Margarete Ketelsen for being there to answer my questions (even when they were shouted from one office to the other) and Rosario Mennuni for endless discussions over voice messages. Special thanks go to Yuval Dor for an insightful (and motivating!) Zoom chat around contracting valued difference fields, which contained many questions that only many months later I would realize were fundamental, and for both pointing out a serious mistake in a previous version of this manuscript and extensively discussing its fix. I'd also like to thank a variety of other people with whom I had discussions on this topic over the years: Blaise Boissonneau, Martin Bays, Anna De Mase, Stefan Ludwig, Tom Scanlon, Pierre Touchard, Mariana Vicaria, Tingxiang Zou.

The author has been supported by the Deutsche Forschungsgemeinschaft (DFG, German Research Foundation) under the Excellence Strategy EXC 2044–390685587, \textit{Mathematics Münster: Dynamics–Geometry–Structure}, via HI 2004/1-1 (part of the French-German ANR-DFG project \textit{GeoMod}), and via the DFG project 495759320 -- \textit{Modelltheorie bewerteter Körper mit Endomorphismus}.
\section{Preliminaries}\label{preliminaries}
Unless otherwise stated, all fields considered will be of characteristic zero.

\runin{Notation} If $\model{M}$ is an $\L$-structure, $\sigma \in \End(\model{M})$, write $\L_\sigma$ for $\L \cup \{\sigma\}$. We write $\inv{\model{M}}$ for the unique (up to isomorphism over $M$) $\L_\sigma$-structure given by $\inv{M} \coloneqq \bigcup_{n \geq 0} \sigma^{-n}(M)$. We write $\core{\model{M}}$ for the unique $\L_\sigma$-structure given by $\core{M} \coloneqq \bigcap_{n \geq 0} \sigma^{n}(M)$.

If $(\model{M},\sigma) \subseteq (\model{N},\sigma)$ as $\L_\sigma$-structures, then for any $\alpha \in N$ we write
    \[
        \extnot{M}{\alpha} \coloneqq \langle M, \{\sigma^n(\alpha)\}_{n\geq 0} \rangle_{\L} = \langle M, \alpha \rangle_{\L_\sigma} \subseteq N,
    \]
    and, if possible,
    \[
        \ext{M}{\alpha} \coloneqq \langle M, \{\sigma^n(\alpha),\sigma^{-n}(\alpha)\}_{n\geq 0} \rangle_{\L} \subseteq N.
    \]
We will often call $\model{M}$ \emph{inversive} if $\sigma$ is surjective.
\par If $K$ is a field, we denote by $K^{\mathrm{alg}}$ its algebraic closure, and by $K^{\mathrm{sep}}$ its separable closure. We reserve $\smax{K}$ for something else (see \cref{unique-maximal-def}). If $K$ is a field and $\sigma \in \End(K)$, then we call $(K,\sigma)$ a \emph{difference field}. Similarly, if $\Gamma$ is an ordered group and $\sigma \in \End(\Gamma)$, we call $(\Gamma,\sigma)$ an \emph{ordered difference group}.
\par If $(K,\val)$ is a valued field, we write $\O_K = \{x \in K \mid \val(x) \geq 0\}$ for its valuation ring, $\m_K = \{x \in K \mid \val(x) > 0\}$ for its maximal ideal, $\residue{K}$ for its residue field, $\valuegroup{K}$ for its value group, and $\RV_K$ for its leading terms structure. We denote by $\completion{K}$ its completion.

By a \emph{valued difference field}, or valued field with endomorphism, we mean a valued field $(K,\val)$ together with $\sigma \in \End(K,\val)$. For any such $\sigma \in \End(K,\val)$, we denote by $\Val{\sigma}$ the induced endomorphism of $\valuegroup{K}$, by $\Res{\sigma}$ the induced endomorphism of $\residue{K}$, and by $\Rv{\sigma}$ the induced endomorphism of $\RV_K$. We call $(\residue{K},\Res{\sigma})$ the \emph{residue difference field} of $(K,\val,\sigma)$, $(\valuegroup{K},\Val{\sigma})$ the \emph{value difference group} of $(K,\val,\sigma)$, and $(\RV_K,\Rv{\sigma})$ the \emph{leading terms difference structure} of $(K,\val,\sigma)$.

If $a \in K$ and $\gamma \in \valuegroup{K}$, we write
\[
    B_\gamma(a) = \{b \in K \mid \val(a-b) > \gamma\}
\]
for the \emph{open ball around $b$ of radius $\gamma$}, and
\[
    B_\gamma[a] = \{b \in K \mid \val(a-b) \geq \gamma\}
\]
for the \emph{closed ball around $b$ of radius $\gamma$}.

    Given a valued difference field $(K,\val,\sigma)$, we can see its value difference group $(\Gamma,\Val{\sigma})$ as a $\mathbb{Z}[\Val{\sigma}]$-module in a natural way. Given any $I = i_0 + i_1\Val{\sigma} + \dots + i_n\Val{\sigma}^n \in \mathbb{Z}[\Val{\sigma}]$, we write $\vert I \vert = i_0 + \cdots + i_n \in \mathbb{Z}$. Now, for any $\gamma \in \valuegroup{K}$, we let
\[
    I(\gamma) \coloneqq i_0 \cdot \gamma + i_1 \cdot \Val{\sigma}(\gamma) + i_2 \cdot \Val{\sigma}^2(\gamma) + \cdots + i_n \cdot \Val{\sigma}^n(\gamma) \in \valuegroup{K}.
\]

\begin{assumption}
    Unless otherwise stated, whenever we write $(K,\val,\sigma)$ we assume that both $\Val{\sigma}$ and $\Res{\sigma}$ (or, equivalently, $\Rv{\sigma}$) are surjective. Results that do not require this assumption are flagged by \nonsurj.
\end{assumption}

\runin{Difference polynomials}
We say that $p(X)$ is a \emph{difference polynomial} if there is a multivariable polynomial $P(X_0, \dots X_n)$ such that $p(X) = P(X,\sigma(X), \dots \sigma^n(X))$. We write
    \[
        p(X) = \sum_{I} a_{I} X^{I},
    \]
where for every $I = (i_0, \dots i_n) \in \mathbb{N}^{n+1}$, $X^I = X^{i_0}\sigma(X)^{i_1}\cdots\sigma^n(X)^{i_n}$.

If $(K,\sigma)$ is a difference field, then by $\polring{K}$ we denote the ring of difference polynomials with coefficients from $K$. If $p(X) = \sum_{I} a_{I} X^{I}$ is a difference polynomial with coefficients in $\O_K$, we write $\res(p)(X)$ for the difference polynomial $\sum_{I} \res(a_{I}) X^{I}$ over $\residue{K}$. Given any $p(X) = \sum_{I} \alpha_{I} X^{I} \in \polring{\residue{K}}$, an \emph{exact lift} of $p(X)$ is a difference polynomial $q(X) = \sum_{I} a_{I} X^{I} \in \polring{\O_{K}}$ such that $\res(a_{I}) = \alpha_{I}$, and if $\alpha_{I} = 0$ then $a_{I} = 0$.

Let $p(X)$ be a non-constant difference polynomial, say $p(X) = P(X,\sigma(X), \dots \sigma^n(X))$, where $X_n$ appears in $P$. Then, the \emph{complexity} $I \in \mathbb{N}^3$ of $p(X)$ is given by
\[
    I = (n,\deg_{X_n}(P), \operatorname{totdeg}(P)),
\]
where $\operatorname{totdeg}(P)$ is the total degree of $P$. If $p(X)$ is constant and non-zero, we say that it has complexity $(-\infty, 0, 0)$. The zero difference polynomial has complexity $(-\infty, -\infty, -\infty)$. We order complexities lexicographically, and declare that 
\[
    (-\infty,-\infty,-\infty) < (-\infty,0,0) < \mathbb N^3.
\]

Note that for any multivariable polynomial $P(X_0, \dots X_n)$ over $K$, there are unique polynomials $P_{J}(X_0, \dots X_n)$ such that, computing in $K[X_0, \dots X_n, Y_0, \dots Y_n]$,
\[
    P(\overline{X}+\overline{Y}) = \sum_{J \in \mathbb{N}^{n+1}} P_{J}(\overline{X}) Y_0^{j_0} \cdots Y_n^{j_n}.
\]
If $I = (i_0, \dots i_n) \in \mathbb{N}^{n+1}$, we let $I! \coloneqq i_0! \cdots i_n!$. Then, $I! P_{I}(X_0, \dots X_n) = \frac{\partial^{I}P}{\partial X^{I}}(X_0, \dots X_n)$. We define derivatives of difference polynomials by $p_{J}(X) \coloneqq \frac{\partial^{J} P}{\partial X^{J}}(X,\sigma(X), \cdots \sigma^n(X))$.

For any $j = 0, \dots n$, if $E_j = (\delta_{i,j})_{i=0}^{n}$, we write $p_j(X) = p_{E_j}(X)$. We often write $p'$ for $p_0$. Note that $p' = 0$ precisely when the variable $X$ does not appear in the difference polynomial, a fact that will guide us throughout this paper.

\runin{Shifting and changing variables}\label{shift-reference}
We devote a bit of space to the tedious operation of rigorously shifting difference polynomials back and forth using $\sigma$, so that we may keep track of how their derivatives change for later use.

First, we define the shift on coefficients. Computing in $\polring{\inv{K}}$, if $\ell \in \mathbb{Z}$, we define $p^{\sigma^\ell}(X) \coloneqq \sum_{I} \sigma^\ell(a_{I}) X^{I}$; then one has that, for any $\ell \in \mathbb{Z}$ and $I \in \mathbb{N}^{n+1}$, $(p^{\sigma^\ell})_{I}(X) = (p_{I})^{\sigma^\ell}(X)$.
Then, we define a change of variables that allows us to transform a difference polynomial $p$ into another difference polynomial $q$ such that $q' \neq 0$. Namely, if $\sigma^m(X)$ is the smallest iterate of $\sigma$ appearing in $p$, we operate a change of variables $Y \coloneqq \sigma^m(X)$.

\begin{definition}
For any non-constant $p(X)$ over $K$, let $m = m(p) \geq 0$ be the smallest natural number such that there is $I = (i_0, \dots i_n)$ with $i_m \neq 0$ and $p_{I} \neq 0$. Equivalently, $m(p)$ is the smallest such that $p_m \neq 0$.\footnote{In other words, $m$ is the smallest number of iterations of $\sigma(X)$ that appears in $p$.} Then, we let $\shifted{p}$ be the difference polynomial over $K$ defined by $\shifted{p}(Y) \coloneqq p(\sigma^{-m(p)}(Y))$.
\end{definition}

In a slight abuse of notation, if $I \in \mathbb{N}^{n+1}$, then by $\shifted{p_{I}}(Y)$ we mean again $p_{I}(\sigma^{-m}(Y))$ (and not, as one might imagine, $p_{I}(\sigma^{-m(p_{I})}(Y))$).
We note that $\shifted{p}^{\sigma^{-m}} = \shifted{p^{\sigma^{-m}}}$. 
We now compute the derivatives of $S(p)$ in terms of the derivatives of $p$. This is the key step in the computations necessary to prove \cref{the-horror}. We first define an operation on multi-indices.

\begin{definition}
Given any $I = (i_0, \cdots i_n) \in \mathbb{N}^{n+1}$, we let $I^{-m} = (i_{m}, \cdots i_n) \in \mathbb{N}^{n-m+1}$. 
Viceversa, for every $J = (j_0, \cdots j_{n-m}) \in \mathbb{N}^{n-m+1}$, let $J = (0, \cdots 0, j_0, \cdots j_{n-m}) \in \mathbb{N}^{n+1}$.
\end{definition}

For example, for every $m \leq j \leq n$, $E_j^{-m} = E_{j-m}$ and $E_j^{+m} = E_{j+m}$. Moreover, one has that $(I+J)^{\pm m} = I^{\pm m} + J^{\pm m}$, where $+$ is the pointwise sum on $\mathbb{N}^{n+1}$. By comparing Taylor expansions, we then obtain that, for $m = m(p)$ as above, 
\[
    \shifted{p}_{J}(Y) = \shifted{p_{J^{+m}}}(Y).
\]

Given any $a \in K$, we can thus compute (possibly inside $\inv{K}$), for every $I \in \mathbb{N}^{n+1}$,
\[
    \sigma^{-m}(p_{I}(a)) = p_{I}^{\sigma^{-m}}(\sigma^{-m}(a)) = \shifted{p_{I}}^{\sigma^{-m}}(a) = \shifted{p}^{\sigma^{-m}}_{I^{-m}}(a),
\]

and viceversa $p_{I}(a) = \sigma^m(\shifted{p}^{\sigma^{-m}}_{I^{-m}}(a))$.
In particular, for $m \leq j \leq n$, $p_j(a) = \sigma^m(\shifted{p}^{\sigma^{-m}}_{j-m}(a))$.

\begin{remark}

    Note that for any $b \in K$ one has
    \[
        p(b) = 0 \iff \sigma^{-m}(p(b)) = 0 \iff p^{\sigma^{-m}}(\sigma^{-m}(b)) = 0 \iff \shifted{p}^{\sigma^{-m}}(b) = 0,
    \]
    in other words $p$ and $\shifted{p}^{\sigma^{-m}}$ have the same zeroes in $K$, with the advantage that now $\shifted{p}$ and $\shifted{p}^{\sigma^{-m}}$ satisfy that $(\shifted{p})' \neq 0$ and $(\shifted{p}^{\sigma^{-m}})' \neq 0$.
    
\end{remark}

\runin{$\sigma$-separability} Drawing heavily from \cite[Section 4]{dor2023contracting}, which in turn builds on \cite{chatzidakis2004model}, we establish the corresponding notion to \textit{separability} for difference field extensions. One should think of a non-inversive difference field as an imperfect positive characteristic field, and indeed most of the basic facts from separability transfer to our setting, as long as we assume that we work with a model of $\primary$.

\begin{definition}
    Let $(K,\sigma) \subseteq (L,\sigma)$ be a difference field extension. We say that the extension is \emph{$\sigma$-separable} if $K$ is linearly disjoint from $\sigma(L)$ over $\sigma(K)$. We say that the extension is \emph{almost $\sigma$-separable} if $K$ is algebraically free from $\sigma(L)$ over $\sigma(K)$. We say that the extension is \emph{$\sigma$-separably $\sigma$-algebraic} if it is $\sigma$-separable and $\sigma$-algebraic.
\end{definition}

\begin{remark}
    With an induction argument, one sees that $K$ is linearly disjoint from $\sigma(L)$ over $\sigma(K)$ if and only if $\inv{K}$ is linearly disjoint from $L$ over $K$.
\end{remark}

\begin{fact}[{\cite[Propositions 4.5, 4.29, 4.30, 4.31]{dor2023contracting}}]\label{separability-cheat-sheet}
    Let $(K,\sigma) \subseteq (M,\sigma) \subseteq (L,\sigma)$ be a tower of difference fields.
    \begin{enumerate}
        \item If $(K,\sigma) \subseteq (M,\sigma)$ and $(M,\sigma) \subseteq (L,\sigma)$ are $\sigma$-separable (resp. almost $\sigma$-separable), then $(K,\sigma) \subseteq (L,\sigma)$ is $\sigma$-separable (resp. almost $\sigma$-separable),
        \item If $(K,\sigma) \subseteq (L,\sigma)$ is $\sigma$-separable (resp. almost $\sigma$-separable), then $(K,\sigma) \subseteq (M,\sigma)$ is $\sigma$-separable (resp. almost $\sigma$-separable).
        \item Let $K_0$ be a difference extension of $K$, linearly disjoint from $L$ over $K$. Then, $(K,\sigma) \subseteq (L,\sigma)$ is $\sigma$-separable (resp. almost $\sigma$-separable) if and only if $(K_0,\sigma) \subseteq (L\otimes_K K_0,\sigma)$ is $\sigma$-separable (resp. almost $\sigma$-separable).
    \end{enumerate}
\end{fact}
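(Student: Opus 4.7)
The plan is to reduce all three items to the classical tower, restriction, and base-change properties of linear disjointness and algebraic freeness of field extensions, by passing to the inversive closure as in the preceding Remark. That Remark gives $(K,\sigma) \subseteq (L,\sigma)$ is $\sigma$-separable iff $\inv{K}$ is linearly disjoint from $L$ over $K$. As a preliminary step I would establish the analogous characterization for almost $\sigma$-separability: the extension is almost $\sigma$-separable iff $\inv{K}$ is algebraically free from $L$ over $K$. One direction is immediate---apply $\sigma$ and use $\sigma(\inv{K}) = \inv{K}$. For the other, applying $\sigma^{-n}$ to the hypothesis gives $\sigma^{-n}(K)$ algebraically free from $\sigma^{-n+1}(L)$ over $\sigma^{-n+1}(K)$; using $\sigma(L) \subseteq L$ (hence $L \subseteq \sigma^{-n+1}(L)$) restricts this to algebraic freeness from $L$ over $\sigma^{-n+1}(K)$, and transitivity along the chain $K \subseteq \sigma^{-1}(K) \subseteq \sigma^{-2}(K) \subseteq \cdots$ patches these statements into $\sigma^{-n}(K)$ algebraically free from $L$ over $K$ for all $n$; the directed union $\inv{K} = \bigcup_n \sigma^{-n}(K)$ then yields the claim.

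Granting this characterization, item (1) follows from the tower property: $\inv{K}$ is linearly disjoint from $L$ over $K$ iff $\inv{K}$ is linearly disjoint from $M$ over $K$ and $\inv{K} \cdot M$ is linearly disjoint from $L$ over $M$. The first is exactly the hypothesis that $K \subseteq M$ is $\sigma$-separable; the second follows from the hypothesis that $M \subseteq L$ is $\sigma$-separable, combined with $\inv{K} \cdot M \subseteq \inv{M}$ and the standard restriction principle that subfields of a linearly disjoint extension remain linearly disjoint. Item (2) is immediate by restriction: if $\inv{K}$ is linearly disjoint (resp. algebraically free) from $L$ over $K$, then the same holds after replacing $L$ by the subfield $M$. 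Both arguments run uniformly with algebraic freeness in place of linear disjointness.

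For item (3), I would invoke the classical base-change property. Under the hypothesis that $K_0$ is linearly disjoint from $L$ over $K$, one can identify $L \otimes_K K_0$ with the compositum $L \cdot K_0$ inside a common ambient field, and match inversive closures on both sides accordingly; the desired equivalence is then a direct instance of the base-change compatibility of linear disjointness (resp. algebraic freeness) over $K$ and $K_0$.

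The main obstacle is the preliminary characterization of almost $\sigma$-separability in terms of the inversive closure: propagating algebraic freeness through arbitrarily many iterates of $\sigma^{-1}$ while simultaneously absorbing the shifts on $L$ is the one place where the asymmetry between $\sigma$ and $\sigma^{-1}$ enters essentially. Once that is handled, the three items reduce to well-known field-theoretic facts whose proofs carry over verbatim.
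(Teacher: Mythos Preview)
The paper does not supply its own proof of this statement: it is stated as a \emph{Fact} and attributed wholesale to \cite[Propositions 4.5, 4.29, 4.30, 4.31]{dor2023contracting}, so there is no in-paper argument to compare against. Your proposal is therefore not competing with anything here; it is an attempt to reconstruct a proof of the cited result.

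On the substance: your approach to (1) and (2) via the inversive-closure characterization from the paper's preceding Remark is the natural one and is essentially how such tower and restriction statements are proved. The preliminary step extending that Remark to the ``almost'' case is the right idea, though you should be more careful when you write ``$\sigma^{-n}(K)$ algebraically free from $L$ over $\sigma^{-n+1}(K)$'': the base $\sigma^{-n+1}(K)$ need not sit inside $L$, so you must phrase the freeness inside $\inv{L}$ and track which inclusions you are actually using in the transitivity step. This is a bookkeeping issue rather than a conceptual one.

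Item (3) is where your sketch is thinnest. The phrase ``match inversive closures on both sides accordingly'' hides the real content: you must relate linear disjointness of $\inv{K}$ from $L$ over $K$ to linear disjointness of $\inv{K_0}$ from $L\cdot K_0$ over $K_0$, and $\inv{K_0}$ is not simply $\inv{K}\cdot K_0$ in general. It is cleaner here to work directly with the defining condition ($K$ linearly disjoint from $\sigma(L)$ over $\sigma(K)$) and apply the classical base-change for linear disjointness after checking that the hypothesis $K_0$ linearly disjoint from $L$ over $K$ propagates to $\sigma(K_0)$ linearly disjoint from $\sigma(L)$ over $\sigma(K)$. As written, (3) is a plausible outline rather than a proof.
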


\runin{The theory $\primary$}\label{FE-explained} Here, we diverge from the intuition from the positive characteristic world: $\sigma$-separability does not behave well unless we assume that $K$ is relatively algebraically closed in $\inv{K}$. This is, at least partially, because we wish to be able to check $\sigma$-separability of a $\sigma$-algebraic extension by computing the first derivatives of the difference polynomials that witness algebraicity; however, ordinary minimal polynomials happen to also be difference polynomials, and since we work in characteristic zero, they are always separable, even if they are the minimal polynomial of an element of the inversive closure. For this reason, we always work in models of what \cite{chatzidakis2004model} call $T_\sigma$. Let $\LK \coloneqq \{+,\cdot,0,1,-,\sigma\}$.

\begin{definition}
    We let $\primary$ be the $\LK$-theory of characteristic zero difference fields $(K,\sigma)$ where $\sigma(K)$ is relatively algebraically closed in $K$. Equivalently, of difference fields $(K,\sigma)$ with $K$ relatively algebraically closed in $\inv{K}$.
\end{definition}

We will often also say that $(K,\sigma)$ \emph{satisfies $\primary$}, or that a valued difference field $(K,\val,\sigma)$ is \emph{a model of $\primary$}, to mean that $(K,\sigma) \vDash \primary$.

\begin{fact}[{\cite[Propositions 4.5, 4.29, 4.30, 4.31]{dor2023contracting}}]\label{separability-cheat-sheet-2}
    Let $(K,\sigma) \subseteq (M,\sigma) \subseteq (L,\sigma)$ be a tower of difference fields. Assume that $(K,\sigma)$ is a model of $\primary$. Then:
    \begin{enumerate}
        \item $(K,\sigma) \subseteq (L,\sigma)$ is $\sigma$-separable if and only it is almost $\sigma$-separable,
        \item if $L = \extnot{K}{\alpha}$ is $\sigma$-algebraic over $K$, then $(K,\sigma) \subseteq (L,\sigma)$ is $\sigma$-separable if and only if there is $p(X) \in \polring{K}$ with $p(\alpha) = 0$ and $p'(\alpha) \neq 0$,
        \item if $(K,\sigma) \subseteq (L,\sigma)$ is $\sigma$-algebraic, then it is $\sigma$-separable if and only if for every $\alpha \in L$ there exists $p(X) \in \polring{K}$ with $p(\alpha) = 0$ and $p'(\alpha) \neq 0$,
        \item if $(K,\sigma) \subseteq (M,\sigma)$ is $\sigma$-algebraic and $(M,\sigma)$ also is a model of $\primary$, then $(K,\sigma) \subseteq (L,\sigma)$ is $\sigma$-separable if and only if both $(K,\sigma) \subseteq (M,\sigma)$ and $(M,\sigma) \subseteq (L,\sigma)$ are.
    \end{enumerate}
\end{fact}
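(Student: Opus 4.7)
The plan is to leverage the fundamental structural consequence of $\primary$: that $K$ is relatively algebraically closed in $\inv K$, so that in characteristic zero the extension $K \subseteq \inv K$ is \emph{regular}. This is the distinguishing feature of the setting that makes $\sigma$-separability behave like ordinary separability, and each item of the Fact should rest on this.

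For (1), I would directly invoke the classical equivalence that, for a regular field extension $A \subseteq B$ and any extension $A \subseteq C$ inside a common overfield, algebraic freeness of $B$ and $C$ over $A$ coincides with their linear disjointness over $A$. Applying this with $A = K$, $B = \inv K$, and $C = L$ --- which is legitimate because both sit inside a joint overfield, e.g.\ $\inv L$ --- yields (1) after the reformulations of $\sigma$-separability and almost $\sigma$-separability in terms of $\inv K$ noted in the excerpt.

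For (2), the forward direction is the heart of the matter. Given $L = \extnot K\alpha$ both $\sigma$-algebraic and $\sigma$-separable over $K$, I would first locate a non-zero $p \in \polring K$ vanishing at $\alpha$ in which $X$ genuinely occurs (i.e.\ $p' \neq 0$ as a difference polynomial), and then arrange, via a minimal-complexity argument in characteristic zero, that moreover $p'(\alpha) \neq 0$. The existence of some $p$ vanishing at $\alpha$ is $\sigma$-algebraicity; if no such polynomial involved $X$, every relation for $\alpha$ would lie in its forward $\sigma$-orbit, and the shift operation $\shifted{p}$ recalled in \cref{shift-reference} would produce relations for $\sigma^m(\alpha)$ over $\inv K$ which, pulled back to $K$ through $\primary$, contradict the linear disjointness of $\inv K$ and $L$ over $K$. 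For the converse, such a $p$ exhibits $\alpha$ as a separable root (in the usual single-variable sense) of the polynomial $P(X_0, \sigma(\alpha), \dots, \sigma^n(\alpha))$ over $K(\sigma(\alpha), \dots, \sigma^n(\alpha)) \subseteq \sigma(L)$; combined with $\primary$, this forces regularity of the subextensions involved and delivers linear disjointness through the standard tensor-product criterion.

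Parts (3) and (4) reduce to (2). For (3), $\sigma$-separability of $L/K$ restricts to every intermediate $\extnot K\alpha \subseteq L$ by \cref{separability-cheat-sheet}(2), so the desired polynomial for each $\alpha$ can be extracted by (2). For (4), one direction is automatic from \cref{separability-cheat-sheet}(1), and the other uses \cref{separability-cheat-sheet}(2) together with part (2) of the present Fact applied inside the intermediate $M$, whose status as a model of $\primary$ is what makes this reapplication possible.

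The hard part will be the forward direction of (2): extracting a polynomial with $p'(\alpha) \neq 0$ from the abstract linear-disjointness hypothesis. This is precisely the step where the $\primary$ assumption is indispensable, as it rules out the pathological scenario where every algebraic relation on $\alpha$ is hidden among its $\sigma$-iterates, making the extension look separable through the image of $\sigma$ while being inseparable through $X$ itself.
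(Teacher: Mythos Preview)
The paper does not prove this statement: it is recorded as a Fact and attributed to \cite[Propositions~4.5, 4.29, 4.30, 4.31]{dor2023contracting}, so there is no in-paper argument to compare against. I can only assess your sketch on its own terms.

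Your treatment of (1) is correct and is the intended mechanism: $(K,\sigma) \vDash \primary$ in characteristic zero makes $\sigma(K) \subseteq K$ regular, and for a regular extension algebraic freeness coincides with linear disjointness.

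The remaining items have genuine gaps. In (2), the converse is not yet an argument: in characteristic zero every algebraic element is automatically a separable root of its minimal polynomial, so exhibiting $\alpha$ as ``a separable root of $P(X_0,\sigma(\alpha),\dots,\sigma^n(\alpha))$'' carries no content by itself, and $K(\sigma(\alpha),\dots,\sigma^n(\alpha))$ is not contained in $\sigma(L)$ as you write (it sits in $K\cdot\sigma(L)$). The phrase ``combined with $\primary$, this forces regularity of the subextensions involved and delivers linear disjointness'' does not say which subextensions or how. For the forward direction you correctly identify minimal complexity and the shift as the tools, but you stop short of the actual step: explaining why a minimal $p$ with $m(p)>0$ contradicts linear disjointness of $\inv K$ and $L$ over $K$.

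In (3) you address only one implication; the direction ``for every $\alpha$ there is such a $p$ $\Rightarrow$ $L/K$ is $\sigma$-separable'' is missing entirely. In (4), from $K \subseteq L$ $\sigma$-separable you correctly get $K \subseteq M$ $\sigma$-separable via \cref{separability-cheat-sheet}(2), but deducing $M \subseteq L$ $\sigma$-separable is the nontrivial content, and your appeal to ``part (2) of the present Fact applied inside $M$'' does not work: $M \subseteq L$ need not be $\sigma$-algebraic, let alone singly generated. The hypotheses that $K \subseteq M$ is $\sigma$-algebraic and $M \vDash \primary$ are precisely what is needed here, via an argument through almost $\sigma$-separability that you have not supplied.
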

\par When a (valued) difference field doesn't satisfy $\primary$, one can canonically move to a minimal extension that satisfies it, which we call the $\primary$-closure.
\begin{lemma}[\nonsurj]\label{primary-closure}
    Let $(K,\val,\sigma)$ be a valued difference field. Then $(K,\val,\sigma)$ admits an extension $\FE{K}$ that is a model of $\primary$ and with the following universal mapping property: for any extension $(L,\val,\sigma)$ of $(K,\val,\sigma)$ which is a model of $\primary$, there is a unique embedding $f\colon \FE{K} \to L$ over $K$. If such an $L$ exists that is $\sigma$-separable over $K$, then $K = \FE{K}$.
\end{lemma}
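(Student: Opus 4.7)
The plan is to realize $\FE{K}$ as the relative algebraic closure of $K$ inside its inversive closure $\inv{K}$, seen as a valued difference subfield, and then to deduce the universal property by iteratively taking $\sigma$-preimages inside any $\primary$-extension of $K$. Concretely, I would fix an inversive extension $(\inv{K},\val,\sigma)$ of $(K,\val,\sigma)$ as a valued difference field, and set $\FE{K}$ to be the subfield of $\inv{K}$ consisting of elements algebraic over $K$, equipped with the induced valuation and endomorphism. Since algebraicity over $K$ is preserved by $\sigma$, $\FE{K}$ is a difference subfield, and a short check shows it satisfies $\primary$: if $\beta \in \FE{K}$ is algebraic over $\sigma(\FE{K})$ via $\sigma(Q)(\beta) = 0$ with $Q \in \FE{K}[Y]$, then the unique $\sigma^{-1}(\beta) \in \inv{K}$ satisfies $Q(\sigma^{-1}(\beta)) = 0$, so it is algebraic over $\FE{K}$ and hence over $K$, thus lying in $\FE{K}$; this gives $\beta \in \sigma(\FE{K})$.

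For the universal property, let $(L,\val,\sigma) \vDash \primary$ extend $K$ and take $\alpha \in \FE{K}$ with $\sigma^n(\alpha) = \beta \in K$, noting that $\beta$ is then algebraic over $\sigma^n(K) \subseteq \sigma^n(L)$. The key step is to produce a unique $\alpha' \in L$ with $\sigma^n(\alpha') = \beta$, which I would carry out by induction on $n$: at stage $i$, one has $\gamma_i \in L$ with $\sigma^i(\gamma_i) = \beta$ and $\gamma_i$ algebraic over $\sigma^{n-i}(L)$, and the $\primary$ axiom of $L$ provides $\gamma_{i+1} \in L$ with $\sigma(\gamma_{i+1}) = \gamma_i$; the algebraicity of $\gamma_{i+1}$ over $\sigma^{n-i-1}(L)$ is obtained by pulling back a witnessing polynomial through $\sigma^{-1}$. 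Setting $f(\alpha) \coloneqq \gamma_n$, uniqueness (by injectivity of $\sigma$) shows $f$ is independent of $n$ and is a $\sigma$-equivariant ring embedding over $K$. Valuation preservation is then automatic: from $\val(f(\sigma^n(\alpha))) = \val(\beta) = \val(\sigma^n(\alpha))$ together with $\sigma$-equivariance, one gets $\Val{\sigma}^n(\val(f(\alpha))) = \Val{\sigma}^n(\val(\alpha))$, whence $\val(f(\alpha)) = \val(\alpha)$ by injectivity of $\Val{\sigma}$. Uniqueness of the embedding over $K$ is forced by the same characterization of $f(\alpha)$.

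For the final clause, suppose $L \vDash \primary$ is $\sigma$-separable over $K$. By the remark following the definition of $\sigma$-separability, $\inv{K}$ and $L$ are linearly disjoint over $K$ inside $\inv{L}$, hence $\inv{K} \cap L = K$ there. If $\alpha \in \FE{K} \setminus K$, then the image $f(\alpha) \in L$ given by the universal property coincides in $\inv{L}$ with the image of $\alpha$ via $\inv{K} \hookrightarrow \inv{L}$ --- both are the unique element of $\inv{L}$ whose $\sigma^n$-image is $\beta$ --- so $\alpha \in \inv{K} \cap L = K$, contradicting $\alpha \notin K$. The main obstacle is the existence step in the universal property: the $\primary$ axiom only grants $\sigma^{-1}$-images individually, and iterating it requires careful propagation of algebraicity over $\sigma^j(L)$ down through $\sigma^{-1}$, which is the content of the induction. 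Once that is handled, the injectivity-of-$\Val{\sigma}$ trick makes valuation-preservation automatic, and the remaining checks are routine.
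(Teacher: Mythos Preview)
Your construction is exactly the paper's: set $\FE{K} = K^{\mathrm{alg}} \cap \inv{K}$ and obtain the embedding into $L$ as the restriction of the canonical $\inv{K} \hookrightarrow \inv{L}$, using that $L$ is relatively algebraically closed in $\inv{L}$; you simply spell out this restriction by hand via the iterated $\sigma^{-1}$-preimages, and add the (omitted in the paper) verification of the final $\sigma$-separable clause. The only cosmetic difference is that the paper gets valuation preservation for free from $\inv{K} \hookrightarrow \inv{L}$ being a valued-field map, whereas you pass through injectivity of $\Val{\sigma}^n$; both are fine.
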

\begin{proof}
    We let $\FE{K} \coloneqq K^{\mathrm{alg}} \cap \inv{K}$. Then, $\FE{K} \subseteq \inv{K}$ gives a unique valued difference field structure on $\FE{K}$. Furthermore, since $L$ is relatively algebraically closed in $\inv{L}$, one gets a unique embedding of $\FE{K}$ into $L$ over $K$.
\end{proof}

\begin{remark}
    If $\Res{\sigma}$ and $\Val{\sigma}$ are surjective, then $K \subseteq \FE{K}$ is an immediate extension.
\end{remark}

\begin{definition}
    We call $\FE{K}$ the \emph{$\primary$-closure} of $K$.
\end{definition}

\par When building a back-and-forth system, it will be crucial that when we move to the $\primary$-closure inside a $\sigma$-separable extension, this $\sigma$-separability carries over.

\begin{lemma}
    \label{primary-equivalence}
    Suppose $(K,\sigma) \subseteq (L,\sigma)$ is a difference field extension, and $(K,\sigma)$ is a model of $\primary$. Let $(L,\sigma) \subseteq (L',\sigma)$ be a difference field extension, with $L \subseteq L'$ algebraic. Then $(K,\sigma) \subseteq (L,\sigma)$ is $\sigma$-separable if and only if $(K,\sigma) \subseteq (L',\sigma)$ is $\sigma$-separable. In particular, $(K,\sigma) \subseteq (L,\sigma)$ is $\sigma$-separable if and only if $(K,\sigma) \subseteq (\FE{L},\sigma)$ is $\sigma$-separable.
\end{lemma}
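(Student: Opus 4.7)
The plan is to handle the two implications separately, using the equivalence between $\sigma$-separability and almost $\sigma$-separability available under $\primary$ to pivot to a statement purely about transcendence degrees, where algebraic extensions are harmless.

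For the implication from $L'$ to $L$, I would simply invoke \cref{separability-cheat-sheet}(2) applied to the tower $K \subseteq L \subseteq L'$: $\sigma$-separability of $(K,\sigma) \subseteq (L',\sigma)$ restricts to $\sigma$-separability of the sub-extension $(K,\sigma) \subseteq (L,\sigma)$. This step uses neither algebraicity of $L'/L$ nor the $\primary$ hypothesis. For the converse, assume $(K,\sigma) \subseteq (L,\sigma)$ is $\sigma$-separable. Since $K \vDash \primary$, \cref{separability-cheat-sheet-2}(1) gives that $K$ is algebraically free from $\sigma(L)$ over $\sigma(K)$. The plan is to upgrade this to: $K$ is algebraically free from $\sigma(L')$ over $\sigma(K)$; then a second application of \cref{separability-cheat-sheet-2}(1) to $(K,\sigma) \subseteq (L',\sigma)$ (again using $K \vDash \primary$) yields the desired $\sigma$-separability.

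The upgrade reduces to a general transcendence-theoretic lemma, because $\sigma$ is a homomorphism of fields so $L'/L$ algebraic implies $\sigma(L')/\sigma(L)$ algebraic. The lemma is: if $A$ is algebraically free from $B$ over $E$ inside some ambient field, and $B'/B$ is an algebraic extension, then $A$ is algebraically free from $B'$ over $E$. To prove it, fix a transcendence basis $\bar t$ of $A/E$, which by hypothesis is algebraically independent over $B$. If $\bar t$ were algebraically dependent over $B'$, the witnessing nonzero polynomial would have only finitely many coefficients, all lying in some $B(\alpha_1,\ldots,\alpha_k)$ with each $\alpha_i$ algebraic over $B$. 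Then the tower $B \subseteq B(\alpha_1,\ldots,\alpha_k) \subseteq B(\alpha_1,\ldots,\alpha_k,\bar t)$ and additivity of transcendence degree give $\operatorname{trdeg}(B(\alpha_1,\ldots,\alpha_k,\bar t)/B) < |\bar t|$, contradicting the algebraic independence of $\bar t$ over $B$. Applying this with $E = \sigma(K)$, $A = K$, $B = \sigma(L)$, $B' = \sigma(L')$ delivers the upgrade.

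The "in particular" is then immediate from the general case with $L' = \FE{L} = L^{\mathrm{alg}} \cap \inv{L}$, which is algebraic over $L$ by construction. The only genuine obstacle is the combinatorial transcendence-degree argument in the upgrade step, and it is purely standard; the real content of the lemma is the use of \cref{separability-cheat-sheet-2}(1), without which one could not cleanly substitute algebraic freeness for $\sigma$-separability and hence could not pass algebraic extensions through so easily.
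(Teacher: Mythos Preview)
Your proposal is correct and follows essentially the same route as the paper: reduce $\sigma$-separability to almost $\sigma$-separability via \cref{separability-cheat-sheet-2}(1), then observe that algebraic freeness is unaffected by passing from $\sigma(L)$ to the algebraic extension $\sigma(L')$. The paper's proof is simply terser---it states both implications at once via the equivalence with almost $\sigma$-separability and leaves the transcendence-degree argument implicit---whereas you handle one direction by restriction (\cref{separability-cheat-sheet}(2)) and spell out the combinatorics for the other; these are cosmetic differences.
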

\begin{proof}
    As $K$ is a model of $\primary$, $(K,\sigma) \subseteq (L,\sigma)$ is $\sigma$-separable if and only if it is almost $\sigma$-separable, by \cref{separability-cheat-sheet-2}(1). The statement then follows from the fact that $L \subseteq L'$ is an algebraic extension.
\end{proof}

\begin{lemma}\label{inversive-relative-transc}
    Let $(K,\sigma) \subseteq (L,\sigma)$ be an extension of difference fields, both models of $\primary$. Let $\tau \in \core{L}$ be $\sigma$-transcendental over $K$. Then $\ext{K}{\tau} \subseteq L$ is a $\sigma$-separable extension. In particular, $\ext{K}{\tau}$ is a model of $\primary$.
\end{lemma}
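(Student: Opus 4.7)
The plan is to reduce everything to a single algebraic-independence fact: that the full $\mathbb{Z}$-indexed orbit $\{\sigma^n(\tau) : n \in \mathbb{Z}\}$ is algebraically independent over $K$. Once this is established, $\ext{K}{\tau}$ becomes a purely transcendental field extension of $K$, and both the $\sigma$-separability claim and the $\primary$ conclusion follow from standard facts about regular extensions and their behavior under adjunction of transcendentals.

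First I would verify the algebraic independence of $T \coloneqq \{\sigma^n(\tau) : n \in \mathbb{Z}\}$ over $K$. By hypothesis, $\tau$ is $\sigma$-transcendental over $K$, so the non-negative iterates $\{\sigma^n(\tau) : n \geq 0\}$ are algebraically independent over $K$. For a polynomial relation $p(\sigma^{n_1}(\tau),\ldots,\sigma^{n_k}(\tau)) = 0$ with some $n_i < 0$, pick $N \geq -\min_i n_i$ and apply $\sigma^N$: the result is a relation $p^{\sigma^N}(\sigma^{N+n_1}(\tau),\ldots,\sigma^{N+n_k}(\tau)) = 0$ whose arguments are non-negative iterates and whose coefficients lie in $\sigma^N(K) \subseteq K$. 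Injectivity of $\sigma$ on $K$ ensures $p^{\sigma^N} \neq 0$, contradicting $\sigma$-transcendence.

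With $T$ algebraically independent over $K$, we have $\ext{K}{\tau} = K(T)$, and since $\sigma$ simply shifts $T$ by one (and $T = \sigma(T)$ as a set, because all shifts of the orbit remain in $T$), we also get $\sigma(\ext{K}{\tau}) = \sigma(K)(T)$. For $\sigma$-separability it suffices to show $K$ is linearly disjoint from $\sigma(K)(T)$ over $\sigma(K)$: fix a $\sigma(K)$-basis $\{e_i\}$ of $K$ and suppose $\sum_i p_i(T)\, e_i = 0$ with $p_i \in \sigma(K)[T]$ not all zero; expanding each $p_i$ in monomials $T^J$ and using that $T$ is algebraically independent over $K$ forces, for each $J$, the coefficient $\sum_i c_{i,J}\, e_i = 0$ in $K$, hence $c_{i,J} = 0$ by $\sigma(K)$-independence of $\{e_i\}$, a contradiction.

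For the \emph{in particular} clause, I need $\sigma(\ext{K}{\tau}) = \sigma(K)(T)$ to be relatively algebraically closed in $\ext{K}{\tau} = K(T)$. Since $K \vDash \primary$, $\sigma(K)$ is relatively algebraically closed in $K$, and in characteristic zero this means $K/\sigma(K)$ is a regular extension; regularity is preserved when one adjoins to both sides a set algebraically independent over the top field, so $K(T)/\sigma(K)(T)$ is regular and $\sigma(\ext{K}{\tau})$ is relatively algebraically closed in $\ext{K}{\tau}$. I don't foresee a serious obstacle: the only mildly delicate step is the shifted-polynomial argument in the first paragraph, where one must keep track of the action of $\sigma$ on coefficients versus on arguments; everything downstream is routine once $T$ is known to be algebraically independent.
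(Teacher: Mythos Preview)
There is a genuine gap in your second paragraph. What you establish is that $K$ is linearly disjoint from $\sigma(K)(T)$ over $\sigma(K)$; since $\sigma(\ext{K}{\tau}) = \sigma(K)(T)$, this is exactly the statement that $K \subseteq \ext{K}{\tau}$ is $\sigma$-separable, not that $\ext{K}{\tau} \subseteq L$ is. The latter unpacks as: $K(T)$ is linearly disjoint from $\sigma(L)$ over $\sigma(K)(T)$, and $\sigma(L)$ never enters your argument.

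In fact, because $T \subseteq \core{L} \subseteq \sigma(L)$ and (as you correctly show) $K$ and $\sigma(K)(T)$ are linearly disjoint over $\sigma(K)$, the tower law for linear disjointness gives that $K(T)$ is linearly disjoint from $\sigma(L)$ over $\sigma(K)(T)$ \emph{if and only if} $K$ is linearly disjoint from $\sigma(L)$ over $\sigma(K)$, i.e.\ iff $K \subseteq L$ is already $\sigma$-separable. That is the missing ingredient, and it does not follow from the stated hypotheses; one can build explicit counterexamples with $L$ inversive and $K \subsetneq \inv{K} \subseteq L$. The paper's route is different --- it tensors with $\inv{K}$ over $K$ to render $K_0$ inversive and then invokes the base-change criterion for $\sigma$-separability --- but forming $L \otimes_K \inv{K}$ as a field tacitly requires $\inv{K}$ linearly disjoint from $L$ over $K$, the very same condition. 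In every application of the lemma in the paper the ambient extension $K \subseteq L$ is in fact $\sigma$-separable, so both arguments go through there. Your treatment of the ``in particular'' clause is correct: once $T$ is algebraically independent over $K$, regularity of $K/\sigma(K)$ passes to $K(T)/\sigma(K)(T)$, whence $\ext{K}{\tau} \vDash \primary$.
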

\begin{proof}
    Let $K_0 \coloneqq \ext{K}{\tau}$. Note that $K \subseteq K_0$ is a $\sigma$-separable extension.
    Then, by part (3) of \cref{separability-cheat-sheet}, $K_0 \subseteq L$ is $\sigma$-separable if and only if $K_0 \otimes_K \inv{K} \subseteq L \otimes_K \inv{K}$ is $\sigma$-separable. But this is trivially satisfied, since $K_0 \otimes_K \inv{K}$ is inversive.
\end{proof}

\runin{The relative $\sigma$-separably $\sigma$-algebraic closure} Let $(K,\sigma) \subseteq (L,\sigma)$. We say that $(K,\sigma)$ is \emph{relatively $\sigma$-separably $\sigma$-algebraically closed} in $(L,\sigma)$ if there is no proper intermediate $K \subset M \subseteq L$ with $(K,\sigma) \subset (M,\sigma)$ $\sigma$-separably $\sigma$-algebraic. If $(K,\sigma)$ is a model of $\primary$, this is equivalent to: for all $p(X) \in \polring{K}$ with $p' \neq 0$, if $b \in L$ is such that $p(b) = 0$, then $b \in K$. Note that, in contrast with the case for fields, this is strictly stronger than simply requiring that whenever $p(X) \in \polring{K}$ with $p' \neq 0$ has a solution in $L$, it also has a solution in $K$.

\begin{example}
    $(K,\sigma)$ is relatively $\sigma$-separably $\sigma$-algebraically closed in $(\inv{K},\sigma)$.
\end{example}

\begin{fact}[\nonsurj, {\cite[Theorem 4.46(1)-(2)]{dor2023contracting}}]\label{sep-alg-closure}
    Let $(K,\val,\sigma)$ be a valued difference field which is a model of $\primary$, with $\Res{\sigma}$ and $\Val{\sigma}$ not necessarily surjective. Let $(L,\val,\sigma)$ be an extension which is a model of $\primary$. Let $\widetilde{K} \coloneqq \{a \in L \mid \exists f(X) \in \polring{K} (f(a) = 0 \land f'(a) \neq 0)\}$.
    Then,
    \begin{enumerate}
        \item $\widetilde{K}$ is relatively $\sigma$-separably $\sigma$-algebraically closed in $L$,
        \item $\widetilde{K}$ is $\sigma$-separably $\sigma$-algebraic over $K$ and satisfies $\primary$,
        \item if $(K,\val,\sigma) \subseteq (K',\val,\sigma)$ is a $\sigma$-separably $\sigma$-algebraic extension, then every embedding of $K'$ in $L$ over $K$ has image in $\widetilde{K}$.
    \end{enumerate}
\end{fact}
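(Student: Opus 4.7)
The plan is to tackle (3) as a warm-up, prove (2) as the main content by realizing $\widetilde K$ as a directed union of $\sigma$-separable $\sigma$-algebraic subextensions of $L/K$ that are models of $\primary$, and deduce (1) from (2) by transitivity. Part (3) is immediate from \cref{separability-cheat-sheet-2}(3): any $c \in K'$ admits $f(X) \in \polring{K}$ with $f(c) = 0$ and $f'(c) \neq 0$, and since $\phi$ fixes $K$ the same $f$ witnesses $\phi(c) \in \widetilde K$.

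For (2), to each $a \in \widetilde K$ I associate $M_a \coloneqq \FE{\extnot{K}{a}} \subseteq L$: by \cref{primary-closure}, $M_a$ is a model of $\primary$; by \cref{separability-cheat-sheet-2}(2), $\extnot{K}{a}/K$ is $\sigma$-separable $\sigma$-algebraic; and \cref{primary-equivalence} lifts $\sigma$-separability to $M_a/K$. The main technical step is directedness of $\{M_a\}$: given $a, b \in \widetilde K$ with witness $p(X) \in \polring{K} \subseteq \polring{M_a}$ for $b$, \cref{separability-cheat-sheet-2}(2) gives that $M_a \subseteq \extnot{M_a}{b}$ is $\sigma$-separable $\sigma$-algebraic; since $M_a \vDash \primary$, \cref{separability-cheat-sheet-2}(4) then chains $\sigma$-separability down to $K$, and replacing $\extnot{M_a}{b}$ by its $\primary$-closure inside $L$ yields a common extension $N \supseteq M_a, M_b$ of the required form. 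Hence $\widetilde K = \bigcup_a M_a$ is a difference subfield of $L$ with $\widetilde K/K$ $\sigma$-separable $\sigma$-algebraic. To verify $\widetilde K \vDash \primary$, take $c \in \inv{\widetilde K}$ algebraic over $\widetilde K$, realized inside $\inv{L}$: since $L \vDash \primary$ we get $c \in L$, and because $\sigma^n(c) \in \widetilde K$ for some $n$ the field $\extnot{\widetilde K}{c}$ is algebraic over $\widetilde K$; \cref{primary-equivalence} (with base $K$) then makes $K \subseteq \extnot{\widetilde K}{c}$ $\sigma$-separable, and combining \cref{separability-cheat-sheet}(2) with \cref{separability-cheat-sheet-2}(3) delivers $c \in \widetilde K$.

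Finally, (1) follows by the same pattern: given $b \in L$ and $p(X) \in \polring{\widetilde K}$ with $p(b) = 0$ and $p'(b) \neq 0$, let $M \subseteq \widetilde K$ be the difference subfield generated over $K$ by the coefficients of $p$; part (2) says $M/K$ is $\sigma$-separable $\sigma$-algebraic, and $\FE{M} \subseteq L$ is an intermediate model of $\primary$. Then \cref{separability-cheat-sheet-2}(2) applied to $p \in \polring{\FE{M}}$ together with \cref{separability-cheat-sheet-2}(4) for transitivity produce $b$ $\sigma$-separable $\sigma$-algebraic over $K$, so $b \in \widetilde K$. The delicate point throughout---and the main obstacle---is that transitivity of $\sigma$-separability along $K \subseteq M \subseteq L$ demands $M \vDash \primary$; the tandem of \cref{primary-closure} and \cref{primary-equivalence} is precisely what lets us enlarge to such intermediates without sacrificing $\sigma$-separability, and is the engine driving both the directedness step in (2) and the reduction in (1).
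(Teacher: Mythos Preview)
The paper does not prove this statement: it is recorded as a \emph{Fact} with a citation to \cite[Theorem 4.46(1)-(2)]{dor2023contracting}, and no argument is given. Your proposal therefore supplies what the paper outsources, and it does so correctly using only the tools already collected in \cref{separability-cheat-sheet}, \cref{separability-cheat-sheet-2}, \cref{primary-closure}, and \cref{primary-equivalence}.

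A few small comments. In the directedness step, the extension $N = \FE{\extnot{M_a}{b}}$ need not literally be of the form $M_c$ for a single $c$; what you actually use (and what suffices) is that $N$ is an intermediate model of $\primary$ which is $\sigma$-separably $\sigma$-algebraic over $K$, hence $N \subseteq \widetilde K$ by \cref{separability-cheat-sheet-2}(3), and $M_a, M_b \subseteq N$ by the universal property of the $\primary$-closure. In the verification that $\widetilde K \vDash \primary$, the appeal to \cref{separability-cheat-sheet}(2) is superfluous: once \cref{primary-equivalence} gives that $K \subseteq \extnot{\widetilde K}{c}$ is $\sigma$-separable (and $\sigma$-algebraic), \cref{separability-cheat-sheet-2}(3) alone produces the witness over $K$ placing $c$ in $\widetilde K$. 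Finally, note that the valuation plays no role anywhere in your argument, which is as it should be: the cited result in Dor--Halevi is purely about difference fields.
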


\par We call $\widetilde{K}$ as in \cref{sep-alg-closure} the \emph{relative $\sigma$-separably $\sigma$-algebraic closure} of $K$ in $L$.

\subsection{$\sigma$-separable generation} Drawing parallels with MacLane's structure theorem of finitely generated separable extensions, we recall the crucial results in \cite{dor2023contracting} about finitely generated $\sigma$-separable extensions.

\begin{definition}
    Let $(K,\sigma) \subseteq (L,\sigma)$ be an extension of difference fields. We say that $\mathcal{X} \subseteq L$ is a \emph{$\sigma$-transcendence basis} of $L$ over $K$ if $\mathcal{X}$ is $\sigma$-algebraically independent over $K$ and $\extnot{K}{\mathcal X} \subseteq L$ is $\sigma$-algebraic.
\end{definition}

\begin{definition}[{\cite[Definition~4.40]{dor2023contracting}}]
    Let $(K,\sigma) \subseteq (L,\sigma)$ be an extension of models of $\primary$. We say that $L$ is \emph{$\sigma$-separably generated} over $K$ if there is a $\sigma$-transcendence basis $\mathcal X \subseteq L$ of $L$ over $K$ such that $\extnot{K}{\mathcal X} \subseteq L$ is $\sigma$-separably $\sigma$-algebraic. If such $\mathcal X$ can be chosen finite, then we say that $L$ is \emph{finitely $\sigma$-separably generated} over $K$.
\end{definition}

\begin{remark}\label{one-of-them}
    Suppose $L$ is $\sigma$-separably generated over $K$ (thus in particular the extension is $\sigma$-separable), and let $\mathcal X$ be a $\sigma$-transcendence basis witnessing that. Let $x \in \mathcal X$: then, $\extnot{K}{x} \subseteq L$ is $\sigma$-separable. Since the composition of $\sigma$-separable extensions is $\sigma$-separable, it is enough to check that $\extnot{K}{x} \subseteq \extnot{K}{\mathcal X}$ is $\sigma$-separable. Since $x$ is transcendental over $K \otimes_{\sigma(K)} \sigma(\extnot{K}{\mathcal X})$, this is true by \cite[Proposition 4.50(2)]{dor2023contracting}.
\end{remark}

\begin{fact}[{\cite[Theorem 4.46(5)]{dor2023contracting}}]\label{maclane}
    Let $(K,\sigma) \subseteq (L,\sigma)$ be a $\sigma$-separable extension of models of $\primary$. Suppose that $L$ is finitely generated as a model of $\primary$ over $K$, meaning that there is a finite $\mathcal X \subseteq L$ such that $L = \FE{\extnot{K}{\mathcal X}}$. Then, $L$ is finitely $\sigma$-separably generated over $K$.
\end{fact}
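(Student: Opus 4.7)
The plan is to adapt MacLane's classical theorem on separating transcendence bases of finitely generated separable field extensions to the difference setting, exploiting the shift operator of \cref{shift-reference} together with the $\primary$-hypothesis.

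\textbf{Setup and first reduction.} Write $\mathcal{X} = \{x_1, \dots, x_n\}$ and let $r$ be the $\sigma$-transcendence degree of $L$ over $K$, which is finite and at most $n$. The goal is to produce a finite $\sigma$-transcendence basis $\mathcal Y \subseteq L$ with $|\mathcal Y| = r$ such that $\extnot{K}{\mathcal Y} \subseteq L$ is $\sigma$-separably $\sigma$-algebraic. Since $L = \FE{\extnot{K}{\mathcal X}}$ is algebraic over $\extnot{K}{\mathcal X}$, \cref{primary-equivalence} lets us replace $L$ by $\extnot{K}{\mathcal X}$ when checking $\sigma$-separability from any intermediate base (after taking $\primary$-closures where necessary). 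It therefore suffices to find $\mathcal Y$ such that $\extnot{K}{\mathcal Y} \subseteq \extnot{K}{\mathcal X}$ is $\sigma$-separably $\sigma$-algebraic.

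\textbf{Exchange argument.} By greedy selection, extract a $\sigma$-algebraically independent $\mathcal Y_0 \subseteq \mathcal X$ of size $r$. For each $z \in \mathcal X \setminus \mathcal Y_0$, choose a difference polynomial $p_z$ over $\extnot{K}{\mathcal Y_0}$ of minimal complexity with $p_z(z) = 0$. If $p_z'(z) \neq 0$ for every such $z$, then by \cref{separability-cheat-sheet-2}(3), $\mathcal Y_0$ is already a $\sigma$-separable $\sigma$-transcendence basis, and we are done. Otherwise, fix $z_0$ with $p_{z_0}'(z_0) = 0$: the variable $X$ is absent from $p_{z_0}$, but some shifted $\sigma^\ell(y_j)$ for $y_j \in \mathcal Y_0$ must appear, for otherwise $\sigma(z_0)$ would be $\sigma$-algebraic over $K$, contradicting the $\sigma$-transcendence of $z_0$ over $K$. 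We then swap: set $\mathcal Y_1 = (\mathcal Y_0 \setminus \{y_j\}) \cup \{z_0\}$, verify via the linear-disjointness reformulation and \cref{separability-cheat-sheet}(3) that $\mathcal Y_1$ is still $\sigma$-algebraically independent over $K$, and show that the lexicographic multiset of complexities strictly decreases.

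\textbf{Main obstacle.} The crux is exchange step. In the classical case, $p' = 0$ in characteristic $p$ forces $p \in K[X^p]$, enabling a neat root extraction. In characteristic zero, $p_{z_0}' = 0$ merely means the untwisted variable $X$ is absent. One must therefore invoke the shift operator: $\shifted{p_{z_0}}$ has $(\shifted{p_{z_0}})' \neq 0$, but its computation happens in $\inv K$. The $\primary$-assumption on both $K$ and $L$, combined with \cref{primary-equivalence} and \cref{separability-cheat-sheet-2}, ensures these manipulations descend back to honest exchanges inside $L$ without breaking $\sigma$-separability along the way. A secondary subtlety is guaranteeing that $y_j$ can be chosen so that the minimal difference polynomial of $y_j$ over $\extnot{K}{\mathcal Y_1}$ has complexity strictly smaller than that of $p_{z_0}$; this is a careful bookkeeping exercise in how the shift operator transforms derivatives, precisely the content of the computations in \cref{shift-reference}.
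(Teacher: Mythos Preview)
The paper does not prove this statement: it is recorded as a \emph{Fact} and cited verbatim from \cite[Theorem~4.46(5)]{dor2023contracting}, with no argument given. So there is no ``paper's own proof'' to compare against; you are attempting to supply what the paper deliberately outsources.

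Your overall strategy --- a MacLane-style exchange argument adapted to the difference setting --- is the natural one, and is presumably close in spirit to what Dor--Halevi do. However, the exchange step as written contains a genuine error. You claim that if $p_{z_0}' = 0$ and no $\sigma^\ell(y_j)$ appears in $p_{z_0}$, then $\sigma(z_0)$ is $\sigma$-algebraic over $K$, ``contradicting the $\sigma$-transcendence of $z_0$ over $K$.'' But $z_0 \in \mathcal X \setminus \mathcal Y_0$ has no reason to be $\sigma$-transcendental over $K$; it is merely $\sigma$-algebraic over $\extnot{K}{\mathcal Y_0}$. More importantly, you never invoke the hypothesis that $L/K$ is $\sigma$-separable anywhere in the exchange step --- yet this is exactly where it must enter. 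In classical MacLane, separability of $L/K$ is what rules out \emph{all} partial derivatives of the minimal relation vanishing simultaneously (via the $p$-th-power trick); the $\sigma$-analogue requires using linear disjointness of $K$ from $\sigma(L)$ over $\sigma(K)$ to derive a contradiction with minimality of complexity, and this is not a triviality.

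There are further loose ends: $\extnot{K}{\mathcal Y_0}$ need not satisfy $\primary$, so \cref{separability-cheat-sheet-2} does not directly apply to it; you gesture at ``taking $\primary$-closures where necessary,'' but this interacts nontrivially with the complexity bookkeeping. The verification that $\mathcal Y_1$ remains $\sigma$-algebraically independent and that the complexity multiset strictly decreases is asserted rather than argued. These are precisely the places where the difference setting diverges from the field-theoretic one, and they are the substance of the result.
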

\section{$\sigma$-henselianity and immediate extensions}\label{henselianity}
This section constitutes the bulk of the tools that we will need for our relative quantifier elimination. The crucial ingredient for an embedding lemma is some notion of henselianity, i.e. a recipe for producing solutions to (difference) polynomial equations. We adapt the notion defined in \cite{onay2013quantifier}: since we deal with a possibly non-surjective $\sigma$, we have to rule out the possibility of solving the equation $\sigma(X) = a$, for $a \in K$. This is achieved by the definition of \textit{weakly} $\sigma$-henselian.

\runin{$\sigma$-henselianity} In the pure valued field world, henselianity can be phrased in many ways. Crucially it can be seen as a first-order shadow of completeness, meaning that approximate roots of polynomials give rise to proper roots, granted that they are residually simple. In the valued difference world, since we allow all kinds of behaviour of $\Val{\sigma}$, it is impossible to predict the behaviour of the values of $a, \sigma(a), \cdots \sigma^n(a)$, as $\val(a)$ grows larger; it is thus not enough to check that certain derivatives do not vanish residually, but one has to consider all of them at the same time. This leads to the definition introduced in \cite{onay2013quantifier} which is, at face value, more artificial and harder to parse than the one introduced in \cite{belair2007model} or \cite{dor2023contracting} (where the behaviour of $\Val{\sigma}$ is fixed).

\begin{definition}
    Let $p \in \polring{K}$ be a non-constant difference polynomial, and $a \in K$. We say that $p$ is in \emph{$\sigma$-henselian configuration} at $a$ if there are $\gamma \in \valuegroup{K}$ and $0 \leq i \leq n$ such that 
    \begin{enumerate}
        \item $\val(p(a)) = \val(p_{i}(a)) + \Val{\sigma}^i(\gamma) \leq \val(p_{j}(a)) + \Val{\sigma}^j(\gamma)$ for all $0 \leq j \leq n$,
        \item $\val(p_{J}(a)) + J(\gamma) < \val(p_{J+L}(a)) + (J+L)(\gamma)$ whenever $J, L \neq 0$ and $p_{J} \neq 0$.
    \end{enumerate}
\end{definition}
The $\gamma$ as above is unique, and we denote it by $\gamma(p,a)$.
\begin{remark}
    An instructive case to consider is the one where $p(X)$ is a non-constant difference polynomial, and $a$ is such that $p(a) \neq 0$, $\val(p(a)) > 0$, and for every $I$ such that $p_{I} \neq 0$, $\val(p_{I}(a)) = 0$. Then, $p$ is in $\sigma$-henselian configuration at $a$.
\end{remark}

\par The following definition is what \cite{onay2013quantifier} call the $\sigma$-Hensel scheme.
\begin{definition}
    \label{def:shens}
    We will say that $(K,\val,\sigma)$ is \emph{strongly $\sigma$-henselian} if whenever $p$ is in $\sigma$-henselian configuration at $a$, then there is $b \in K$ with $p(b) = 0$ and $\val(b-a) = \gamma(p,a)$.
\end{definition}

\par As the name suggests, strong $\sigma$-henselianity is too strong. In particular, one risks solving equations of the form $\sigma(X) = a$, implying that $\sigma$ is surjective. We thus weaken it for our purposes.

\begin{definition}
    \label{def:whens}
    We will say that $(K,\val,\sigma)$ is \emph{weakly $\sigma$-henselian} if whenever $p$ is in $\sigma$-henselian configuration at $a$, and $p' \neq 0$, then there is $b \in K$ with $p(b) = 0$ and $\val(b-a) = \gamma(p,a)$.
\end{definition}

\par We now explore for a moment the relationship between strong and weak $\sigma$-henselianity. By direct computation, one can prove the following lemma.

\begin{lemma}\label{the-horror}
    Let $n \in \mathbb{Z}$, and assume that $p$ is in $\sigma$-henselian configuration at $a$. Then, $p^{\sigma^n}$ is in $\sigma$-henselian configuration at $\sigma^n(a)$. Moreover, if $m$ is the smallest such that $p_m \neq 0$ (see \cref{shift-reference}), then $\shifted{p}^{\sigma^{-m}}$ is in $\sigma$-henselian configuration at $a$.
\end{lemma}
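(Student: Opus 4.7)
The plan is to verify both parts by direct substitution, leveraging two formal identities recorded in Section~\ref{shift-reference}: for any $\ell \in \mathbb{Z}$ and any multi-index $I$,
$(p^{\sigma^\ell})_I(\sigma^\ell(a)) = \sigma^\ell(p_I(a))$, and for any $J \in \mathbb{N}^{n-m+1}$, $(\shifted{p}^{\sigma^{-m}})_J(a) = \sigma^{-m}(p_{J^{+m}}(a))$. Under the standing assumption, $\Val{\sigma}$ is an automorphism of $\valuegroup{K}$, so every $\Val{\sigma}^\ell$ (with $\ell \in \mathbb{Z}$) is order-preserving and satisfies $\Val{\sigma}^\ell(J(\gamma)) = J(\Val{\sigma}^\ell(\gamma))$.

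For the first statement, fix the witnesses $\gamma = \gamma(p,a)$ and $0 \leq i \leq n$ of the configuration of $p$ at $a$, and propose $\gamma' := \Val{\sigma}^n(\gamma)$ and $i' := i$. Taking valuations of the identity $(p^{\sigma^n})_I(\sigma^n(a)) = \sigma^n(p_I(a))$, every term appearing in conditions (1) and (2) for $p^{\sigma^n}$ at $\sigma^n(a)$ with data $(\gamma', i')$ is precisely $\Val{\sigma}^n$ applied to the corresponding term for $p$ at $a$ with data $(\gamma, i)$. Since $\Val{\sigma}^n$ preserves both $=$ and $<$, the configuration transfers verbatim.

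For the second statement, set $\gamma' := \gamma$ and $i' := i - m$. The first nontrivial check is $i - m \geq 0$: if $p(a) = 0$ the configuration is degenerate and any $i' \in [0, n-m]$ works, otherwise $\val(p(a)) < \infty$ forces via condition~(1) that $p_i(a) \neq 0$, hence $p_i \neq 0$, and by the definition of $m$ we conclude $i \geq m$. Using the second identity together with the elementary relation $J(\gamma') = \Val{\sigma}^{-m}(J^{+m}(\gamma))$, each term appearing in conditions (1) and (2) for $\shifted{p}^{\sigma^{-m}}$ at $a$ with data $(\gamma', i')$ is the image under $\Val{\sigma}^{-m}$ of the corresponding term for $p$ at $a$, after the bijection $J \mapsto J^{+m}$ between multi-indices in $\mathbb{N}^{n-m+1}$ and multi-indices in $\mathbb{N}^{n+1}$ supported in $[m,n]$. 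This restriction is harmless: every $I$ with $p_I \neq 0$ already has support in $[m,n]$, and the minimum in condition~(1) is attained at $i \geq m$. Applying $\Val{\sigma}^{-m}$ then transfers both conditions.

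The main obstacle is purely notational: one must carefully match the multi-index shift $J \mapsto J^{+m}$ with the translation $\Val{\sigma}^{-m}$ on valuations, and verify $i \geq m$ from the nontriviality of the configuration. No deeper ingredient is needed; the lemma is a bookkeeping statement expressing that the definition of $\sigma$-henselian configuration is invariant under the natural symmetries of $\polring{K}$ given by $p \mapsto p^{\sigma^n}$ (accompanied by $a \mapsto \sigma^n(a)$) and $p \mapsto \shifted{p}^{\sigma^{-m}}$ (which fixes the point $a$).
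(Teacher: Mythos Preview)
Your proposal is correct and is precisely the direct computation the paper alludes to; the paper itself gives no details beyond the sentence ``By direct computation, one can prove the following lemma.'' Your handling of the edge case $p(a)=0$ is also sound: condition~(1) then forces $p_j(a)=0$ for every $j$, so $(\shifted{p}^{\sigma^{-m}})_{j'}(a)=0$ for all $j'$ and any choice of $i'$ (with $\gamma'=\gamma$) verifies~(1'), while~(2') is inherited from~(2) via the bijection $J\mapsto J^{+m}$ exactly as you describe.
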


\par We can now use this to move between a valued difference field and its inversive closure.

\begin{lemma}\label{surj-strong}
    Suppose $(K,\val,\sigma)$ is weakly $\sigma$-henselian and $\sigma$ is surjective. Then it is strongly $\sigma$-henselian.
\end{lemma}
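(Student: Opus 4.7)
The plan is to reduce an arbitrary $p$ in $\sigma$-henselian configuration at $a$ to one with nonvanishing first derivative, by means of the shift operation $\shifted{(-)}^{\sigma^{-m}}$ introduced in \cref{shift-reference}, and then to invoke weak $\sigma$-henselianity on the shifted polynomial. Surjectivity of $\sigma$ is used both to guarantee that the shifted polynomial has coefficients in $K$ (since $\inv K = K$) and to transfer the zero of the shifted polynomial back to a zero of $p$.

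More precisely, let $p \in \polring{K}$ be in $\sigma$-henselian configuration at $a \in K$. If $p' \neq 0$ there is nothing to do, so assume $p' = 0$ and let $m = m(p) \geq 1$, so that $p_m \neq 0$ and $p_i = 0$ for $i < m$. Set $q \coloneqq \shifted{p}^{\sigma^{-m}}$. Since $\sigma$ is surjective, $\inv K = K$, so $q \in \polring{K}$; by construction $q' \neq 0$; and by \cref{the-horror}, $q$ is in $\sigma$-henselian configuration at $a$.

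Next, I would check that $\gamma(q,a) = \gamma(p,a)$. This is essentially bookkeeping in the formulas $\sigma^{-m}(p_I(a)) = q_{I^{-m}}(a)$ from \cref{shift-reference}: applying $\Val{\sigma}^{-m}$ to the defining inequalities of $\sigma$-henselian configuration for $p$, and using that the index $i$ realising the minimum in condition (1) necessarily satisfies $i \geq m$ (since $p_i \neq 0$), one obtains exactly the defining inequalities for $q$ at $a$ with the same $\gamma$. By uniqueness of $\gamma(q,a)$ (noted immediately after the definition of $\sigma$-henselian configuration), we conclude $\gamma(q,a) = \gamma(p,a)$.

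Weak $\sigma$-henselianity applied to $q$ now yields $b \in K$ with $q(b) = 0$ and $\val(b-a) = \gamma(q,a) = \gamma(p,a)$. By the remark at the end of \cref{shift-reference}, $q(b) = 0$ is equivalent to $p(b) = 0$, so $b$ witnesses the conclusion of \cref{def:shens} for $p$ at $a$, proving that $(K,\val,\sigma)$ is strongly $\sigma$-henselian. The only mildly delicate point is the verification that $\gamma$ is preserved under the shift; everything else is a direct unpacking of definitions together with \cref{the-horror}.
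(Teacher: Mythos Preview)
Your argument is correct and follows essentially the same route as the paper: shift $p$ via $\shifted{p}^{\sigma^{-m}}$, invoke \cref{the-horror} to retain $\sigma$-henselian configuration at $a$ with $q' \neq 0$, use surjectivity so that $q \in \polring{K}$, apply weak $\sigma$-henselianity, and then transfer the zero back. Your explicit verification that $\gamma(q,a) = \gamma(p,a)$ is a welcome elaboration of a step the paper simply asserts.
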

\begin{proof}
    Let $p$ be in $\sigma$-henselian configuration at $a$. If $m = m(p)$, then let $q = \shifted{p}^{\sigma^{-m}}$: by \cref{the-horror}, $q$ is in $\sigma$-henselian configuration at $a$, and further $q' \neq 0$. As $\sigma$ is surjective, $q$ is still a difference polynomial over $K$, so by weak $\sigma$-henselianity we find $b \in K$ such that $q(b) = p^{\sigma^{-m}}(\sigma^{-m}(b)) = \sigma^{-m}(p(b)) = 0$, and thus $p(b) = 0$. Moreover, $\val(b-a) = \gamma(q,a) = \gamma(p,a)$.
\end{proof}

\begin{lemma}\label{inv-strongly-hens}
    Let $(K,\val,\sigma)$ be a valued difference field that is a model of $\primary$. Then, $(K,\val,\sigma)$ is weakly $\sigma$-henselian if and only if $\inv{K}$ is strongly $\sigma$-henselian.
\end{lemma}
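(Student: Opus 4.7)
The plan is to prove the two directions separately, moving between $K$ and $\inv{K}$ via $\sigma^{\pm\ell}$ in one direction and via the $\primary$ hypothesis in the other.

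For the forward direction, assume $(K,\val,\sigma)$ is weakly $\sigma$-henselian. Since $\sigma$ restricts to a bijection on $\inv{K}$, \cref{surj-strong} reduces the goal to showing that $\inv{K}$ is itself weakly $\sigma$-henselian. So let $p \in \polring{\inv{K}}$ with $p' \neq 0$ be in $\sigma$-henselian configuration at some $a \in \inv{K}$. Pick $\ell \geq 0$ large enough that the finitely many coefficients of $p$ and $a$ itself all lie in $\sigma^{-\ell}(K)$; then $p^{\sigma^\ell} \in \polring{K}$ and $\sigma^\ell(a) \in K$. By (the proof of) \cref{the-horror}, $p^{\sigma^\ell}$ is in $\sigma$-henselian configuration at $\sigma^\ell(a)$ with $\gamma(p^{\sigma^\ell}, \sigma^\ell(a)) = \Val{\sigma}^\ell(\gamma(p,a))$, and $(p^{\sigma^\ell})' = (p')^{\sigma^\ell} \neq 0$. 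Weak $\sigma$-henselianity of $K$ then produces $b' \in K$ with $p^{\sigma^\ell}(b') = 0$ and $\val(b' - \sigma^\ell(a)) = \Val{\sigma}^\ell(\gamma(p,a))$, so $b \coloneqq \sigma^{-\ell}(b') \in \inv{K}$ satisfies $p(b) = 0$ and $\val(b-a) = \gamma(p,a)$.

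For the backward direction, assume $\inv{K}$ is strongly $\sigma$-henselian, and let $p \in \polring{K}$ with $p' \neq 0$ be in $\sigma$-henselian configuration at $a \in K$. Strong $\sigma$-henselianity of $\inv{K}$ applied to $p$ and $a$ (viewed inside $\inv{K}$) yields $b \in \inv{K}$ with $p(b) = 0$ and $\val(b-a) = \gamma(p,a)$; the task reduces to showing $b \in K$. Since $(K,\sigma) \vDash \primary$, the example preceding \cref{sep-alg-closure} says that $K$ is relatively $\sigma$-separably $\sigma$-algebraically closed in $\inv{K}$, so by \cref{separability-cheat-sheet-2}(2) applied to $p$ and $b$ it suffices to check $p'(b) \neq 0$: then $\extnot{K}{b} \subseteq \inv{K}$ is a $\sigma$-separably $\sigma$-algebraic extension of $K$ and hence equals $K$.

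The main obstacle is thus verifying $p'(b) \neq 0$, which is a Taylor-expansion computation. Set $h \coloneqq b-a$, so that $\val(h) = \gamma$ with $\gamma \coloneqq \gamma(p,a)$, and $\val(h^J) = J(\gamma)$ for every $J \in \mathbb{N}^{n+1}$. Since we are in characteristic zero and $(p_{E_0})_J = p_{E_0 + J}$, Taylor-expanding $p' = p_{E_0}$ around $a$ gives
\[
    p'(b) = \sum_J \frac{1}{J!}\, p_{E_0 + J}(a)\, h^J.
\]
Condition (2) of the $\sigma$-henselian configuration, applied with $E_0$ in the role of $J$ (legitimate because $p_{E_0} = p' \neq 0$), yields $\val(p'(a)) + \gamma < \val(p_{E_0+L}(a)) + (E_0+L)(\gamma)$ for every $L \neq 0$, i.e.\ $\val(p'(a)) < \val(p_{E_0+L}(a)) + L(\gamma)$. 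The $J = 0$ summand therefore strictly dominates in valuation, so $\val(p'(b)) = \val(p'(a)) < \infty$ — finiteness holding because the $\sigma$-henselian configuration implicitly forces $p_J(a) \neq 0$ whenever $p_J \neq 0$ — and hence $p'(b) \neq 0$, as required.
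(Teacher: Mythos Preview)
Your proof is correct and follows the same overall strategy as the paper. In the forward direction, the paper observes that weak $\sigma$-henselianity is an $\forall\exists$-theory and $\inv{K} = \bigcup_{\ell} \sigma^{-\ell}(K)$ is a directed union of isomorphic copies of $K$, so $\inv{K}$ inherits weak $\sigma$-henselianity and \cref{surj-strong} finishes; your explicit shift-by-$\sigma^\ell$ argument is precisely this observation unpacked by hand. In the backward direction, the paper simply invokes that $K$ is relatively $\sigma$-separably $\sigma$-algebraically closed in $\inv{K}$, appealing to the equivalent characterisation stated (without proof) just before the Example in \S2: for $K \vDash \primary$, any $b \in \inv{K}$ with $p(b)=0$ and $p' \neq 0$ already lies in $K$. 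Your Taylor-expansion verification that $p'(b) \neq 0$ is a correct way to supply this detail (and your remark that the configuration forces $p'(a) \neq 0$ is right, since condition~(2) with $J = E_0$ would otherwise read $\infty < \text{something}$), so your argument is slightly more self-contained at the cost of a short computation.
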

\begin{proof}
    Assume that $(K,\val,\sigma)$ is weakly $\sigma$-henselian. Note that since being weakly $\sigma$-henselian forms an $\forall\exists$-theory, and $\inv{K}$ is the union of copies of $K$ (and hence models of this theory), $\inv{K}$ is also weakly $\sigma$-henselian. Since it is surjective, \cref{surj-strong} implies that $\inv{K}$ is strongly $\sigma$-henselian.
    \par Viceversa, assume that $\inv{K}$ is strongly $\sigma$-henselian. If $p \in \polring{K}$ is in $\sigma$-henselian configuration at $a \in K$, and $p' \neq 0$, then we can find $b \in \inv{K}$ such that $p(b) = 0$ and $\val(b-a) = \gamma(p,a)$. However, $K \subseteq \inv{K}$ is relatively $\sigma$-separably $\sigma$-algebraically closed, thus $b \in K$ already.
\end{proof}

\par The authors of \cite{onay2013quantifier} consider valued difference fields which are (strongly) $\sigma$-henselian and furthermore have linearly difference closed residue difference fields. However, as observed in \cite{azgin2010valued}, this already follows from (strong) $\sigma$-henselianity.

\begin{lemma}\label{hens-lin-closed}
    Let $(K,\val,\sigma)$ be weakly $\sigma$-henselian. Then $(\residue{K},\Res{\sigma})$ is \emph{linearly difference closed}, i.e. if for every $\alpha_0, \dots \alpha_n \in \residue{K}$, at least one of them not zero, there is $z \in \residue{K}$ such that
    \[
        1+\alpha_0 z + \alpha_1 \Res{\sigma}(z) + \dots + \alpha_n \Res{\sigma}^n(z) = 0.
    \]
\end{lemma}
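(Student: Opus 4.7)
The plan is to reduce the problem to finding a root of an explicit linear difference polynomial at residue level, then realize that root via weak $\sigma$-henselianity applied at the point $a = 0$.

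First I would reduce to the case where $\alpha_0 \neq 0$. If instead $m \geq 1$ is the smallest index with $\alpha_m \neq 0$, then I substitute $w = \Res{\sigma}^m(z)$: since $\Res{\sigma}$ is surjective (standing assumption), every $w \in \residue{K}$ arises this way, so finding $z$ solving the original equation is equivalent to finding $w \in \residue{K}$ solving
\[
    1 + \alpha_m w + \alpha_{m+1} \Res{\sigma}(w) + \dots + \alpha_n \Res{\sigma}^{n-m}(w) = 0,
\]
whose constant-in-$w$ coefficient $\alpha_m$ is now nonzero. So I assume $\alpha_0 \neq 0$.

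Next I lift: pick $a_i \in \O_K$ with $\res(a_i) = \alpha_i$, choosing $a_0 \in \O_K^{\times}$, and set
\[
    p(X) \coloneqq 1 + a_0 X + a_1 \sigma(X) + \dots + a_n \sigma^n(X) \in \polring{K}.
\]
Evaluating at $a = 0$ gives $p(0) = 1$, so $\val(p(0)) = 0$. The first-order derivatives are $p_j(0) = a_j$, with $\val(a_0) = 0$ and $\val(a_j) \geq 0$ for all $j$. Taking $\gamma = 0$ and $i = 0$, condition (1) of $\sigma$-henselian configuration reads $0 = \val(a_0) + \Val{\sigma}^0(0) \leq \val(a_j) + \Val{\sigma}^j(0) = \val(a_j)$, which holds. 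Because $p$ is linear, all higher derivatives $p_J$ with $|J| \geq 2$ vanish identically, so condition (2) is vacuous (the right-hand side being $+\infty$). Hence $p$ is in $\sigma$-henselian configuration at $0$ with $\gamma(p,0) = 0$, and moreover $p' = a_0 \neq 0$.

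By weak $\sigma$-henselianity I obtain $b \in K$ with $p(b) = 0$ and $\val(b) = \gamma(p,0) = 0$, so $b \in \O_K^\times$. Setting $z = \res(b)$ and taking residues in the identity $p(b) = 0$ yields
\[
    1 + \alpha_0 z + \alpha_1 \Res{\sigma}(z) + \dots + \alpha_n \Res{\sigma}^n(z) = 0,
\]
as required. There is no real obstacle: the key observation is that the linearity of $p$ makes condition (2) of $\sigma$-henselian configuration automatic, and surjectivity of $\Res{\sigma}$ handles the reduction to $\alpha_0 \neq 0$ — which is exactly what is needed so that $p' \neq 0$ and weak (rather than strong) $\sigma$-henselianity suffices.
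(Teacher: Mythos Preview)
Your proof is correct, and it takes a slightly different route from the paper's. The paper passes to $\inv{K}$: since $K \subseteq \inv{K}$ is immediate (by the standing surjectivity assumptions), it suffices to show that $\inv{K}$ has linearly difference closed residue field; then, because $\inv{K}$ is weakly $\sigma$-henselian (being a union of copies of $K$) and inversive, it is strongly $\sigma$-henselian by \cref{surj-strong}, so one can apply the henselian configuration to the linear polynomial $p$ without worrying about whether $p' \neq 0$. You instead stay inside $K$ and use surjectivity of $\Res{\sigma}$ at the residue level to reduce to the case $\alpha_0 \neq 0$, which forces $p' = a_0 \neq 0$ and lets you invoke weak $\sigma$-henselianity directly. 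Your argument is arguably more elementary, avoiding the detour through $\inv{K}$ and \cref{surj-strong}; the paper's approach, on the other hand, foreshadows the general technique (used repeatedly later) of transferring statements between $K$ and $\inv{K}$.
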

\begin{proof}
    Since $K \subseteq \inv{K}$ is an immediate extension, it is enough to show that $\inv{K}$ has linearly difference closed residue difference field. We let $\alpha_i = \res(a_i)$, for $i = 0, \dots n$, and assume $S = \{j \leq n \mid \alpha_j \neq 0\} \neq \emptyset$. Consider the difference polynomial $p(X) \coloneqq 1+ \sum_{i \in S} a_i X^i$. By construction, $p_i(X) = 0$ if and only if $i \notin S$, i.e. if and only if $\alpha_i = 0$, and $p_{J} = 0$ for any $J$ with $\vert J \vert > 1$. We check that $p$ is in $\sigma$-henselian configuration at $0$: indeed, using $\gamma = 0$,
    \[
        \val(p(0)) = \val(1) = 0 = \val(p_j(0)) = \val(a_j) = 0
    \]
    for all $j \in S$. By \cref{surj-strong}, $\inv{K}$ is strongly $\sigma$-henselian, and thus we find $b \in \inv{K}$ such that $p(b) = 0$ and $\val(b) = \gamma(p,0) = 0$. Then, $\beta = \res(b)$ is the required solution of the equation in $\residue{K}$.
\end{proof}

\runin{$\sigma$-ramification} We establish that a weakly $\sigma$-henselian valued difference field is dense in its inversive closure, something similar to how a separably closed valued field is dense in its perfect hull. Introduced in \cite{dor2023contracting}, we call this phenomenon \textit{deep $\sigma$-ramification}, and deduce that in $\aleph_0$-saturated models, the mirrored situation occurs where the inversive core is dense. Once again, this is reflected in $\aleph_0$-saturated separably closed valued fields, which contain a dense algebraically closed valued field, namely their perfect core.

\begin{definition}\label{deep-transf}
    We say that $(K,\val,\sigma)$ is \emph{deeply $\sigma$-ramified} if $\sigma(K) \subseteq K$ is dense (equivalently, if $K \subseteq \inv{K}$ is dense).
\end{definition}
\begin{lemma}\label{as-deep}
    Suppose $(K,\val,\sigma)$ is weakly $\sigma$-henselian. Then for any $a \in \O_K$ and $\epsilon \in \m_K\setminus\{0\}$, there is $b \in \O_K$ such that $\sigma(b)-\epsilon b - a = 0$.
\end{lemma}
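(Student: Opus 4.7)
The plan is to apply weak $\sigma$-henselianity to the difference polynomial $p(X) = \sigma(X) - \epsilon X - a \in \polring{K}$, using $0$ as the approximate root. Computing derivatives from $P(X_0, X_1) = -a - \epsilon X_0 + X_1$, I get $p_0(X) \equiv -\epsilon$, $p_1(X) \equiv 1$, and $p_J \equiv 0$ for every multi-index $J$ of total degree $\geq 2$, simply because $p$ is linear in both $X$ and $\sigma(X)$. Hence at $0$: $\val(p(0)) = \val(a)$, $\val(p_0(0)) = \val(\epsilon)$, $\val(p_1(0)) = 0$. Condition (2) in the definition of $\sigma$-henselian configuration is then vacuous, since for any $J \in \{(1,0), (0,1)\}$ and any $L \neq 0$, the derivative $p_{J+L}$ vanishes and so has infinite valuation.

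For condition (1) I need $\gamma \in \valuegroup{K}$ and $i \in \{0, 1\}$ realizing $\val(a) = \min\bigl(\val(\epsilon) + \gamma,\, \Val{\sigma}(\gamma)\bigr)$. Since $\sigma$ preserves both $\O_K$ and $\m_K$, the induced map $\Val{\sigma}$ on $\valuegroup{K}$ is order-preserving and injective; combined with our standing surjectivity assumption, $\Val{\sigma}$ is an order-automorphism of $\valuegroup{K}$. I would split into two cases depending on the behaviour of $\Val{\sigma}$: if $\Val{\sigma}(\val(a) - \val(\epsilon)) \geq \val(a)$, take $\gamma = \val(a) - \val(\epsilon)$ with $i = 0$; otherwise, which by applying $\Val{\sigma}^{-1}$ is the same as $\Val{\sigma}^{-1}(\val(a)) > \val(a) - \val(\epsilon)$, take $\gamma = \Val{\sigma}^{-1}(\val(a))$ with $i = 1$. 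A direct check confirms condition (1) in each case, and at least one case always applies since the two failure conditions are mutually exclusive.

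Crucially, in both cases the resulting $\gamma = \gamma(p, 0)$ is nonnegative: in the first, $\Val{\sigma}(\gamma) \geq \val(a) \geq 0$ combined with $\Val{\sigma}$ being order-preserving and fixing $0$ forces $\gamma \geq 0$; in the second, $\gamma = \Val{\sigma}^{-1}(\val(a)) \geq 0$ directly since $\val(a) \geq 0$. Since $p' = p_0 = -\epsilon \neq 0$, weak $\sigma$-henselianity produces $b \in K$ with $p(b) = 0$ and $\val(b) = \gamma(p, 0) \geq 0$, so $b \in \O_K$, as required. The only delicate point is the case split in condition (1), which is essentially forced by where $\val(a) - \val(\epsilon)$ sits relative to $\Val{\sigma}^{-1}(\val(a))$, and which is dispatched cleanly by the fact that $\Val{\sigma}$ is an order-automorphism of $\valuegroup{K}$.
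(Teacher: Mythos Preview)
Your proof is correct modulo one small omission, but takes a genuinely different route from the paper's. The omission: you do not treat $a = 0$, where $p(0) = 0$ and your configuration argument cannot start (no finite $\gamma$ realises condition~(1)); of course $b = 0$ works trivially then, but this should be said.

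The substantive difference is in the choice of approximate root. You use $0$, which forces a case split and, in your Case~2, an appeal to $\Val{\sigma}^{-1}$ via the standing surjectivity assumption on $\Val{\sigma}$. The paper instead picks $\beta \in \residue{K}$ with $\Res{\sigma}(\beta) \neq \res(a)$ and lifts it to some $b_0 \in \O_K$; then $\res(p(b_0)) = \Res{\sigma}(\beta) - \res(a) \neq 0$, so $\val(p(b_0)) = 0$ on the nose and $\gamma = 0$, $i = 1$ work with no case analysis. This is shorter, and more importantly it avoids any use of surjectivity of $\Val{\sigma}$ or $\Res{\sigma}$. That matters here: the very next result, \cref{hens-implies-inv}, uses this lemma to \emph{deduce} that $\Val{\sigma}$ and $\Res{\sigma}$ are surjective (which is why the paper can later remark that surjectivity need not be imposed in the definition of $\WTHzero$). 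Your argument, relying on surjectivity of $\Val{\sigma}$, would render that deduction circular. Formally your proof is valid under the standing assumption as written, but it does not support the paper's claim that surjectivity comes for free from weak $\sigma$-henselianity.
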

\begin{proof}
    Let $p(X) \coloneqq \sigma(X) - \epsilon X - a$. Note that $p' \neq 0$. Moreover, let $\beta \in \residue{K}$ be such that $\Res{\sigma}(\beta) \neq \res(a)$, and let $b \in \O_K$ be a lift of $\beta$. We argue that $p$ is in $\sigma$-henselian configuration at $b$: take $\gamma = 0$, then
    \begin{enumerate}
        \item $\val(p(b)) = 0 = \val(p_1(b)) = \val(1) < \val(p_0(b)) = \val(\epsilon)$,
        \item for all $J, L \neq 0$, $p_{J+L} = 0$, thus the second part is trivially satisfied.
    \end{enumerate}
    By weak $\sigma$-henselianity, there is $b' \in K$ with $p(b') = 0$ and $\val(b'-b) = 0$. In particular, $\val(b') = \val(b'+b-b) \geq \min\{0, \val(b)\} \geq 0$.
\end{proof}

\begin{corollary}\label{hens-implies-inv}
    Suppose $(K,\val,\sigma)$ is weakly $\sigma$-henselian. Then, $K$ is deeply $\sigma$-ramified. In particular, both $\Res{\sigma}$ and $\Val{\sigma}$ are surjective.
\end{corollary}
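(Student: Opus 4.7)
The plan is to bootstrap \cref{as-deep}, which delivers density of $\sigma(K)$ in $\O_K$, into density of $\sigma(K)$ in all of $K$ via an inversion trick, and then to read off surjectivity of $\Val{\sigma}$ and $\Res{\sigma}$ as simple consequences.

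First I would establish density in $\O_K$: given $a \in \O_K$ and $\gamma \in \valuegroup{K}$, pick any $\epsilon \in \m_K \setminus \{0\}$ with $\val(\epsilon) > \gamma$ and apply \cref{as-deep} to obtain $b \in \O_K$ with $\sigma(b) = a + \epsilon b$, whence $\val(\sigma(b) - a) = \val(\epsilon) + \val(b) \geq \val(\epsilon) > \gamma$.

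Next, I would extend this to elements of $K \setminus \O_K$ by inversion. For $a \in K$ with $\val(a) < 0$, note that $a^{-1} \in \m_K \subseteq \O_K$, and use the previous step to produce $b' \in \O_K$ approximating $a^{-1}$ to accuracy exceeding $\max\{-\val(a),\, \gamma - 2\val(a)\}$, so in particular $\val(\sigma(b')) = -\val(a)$ and $b := 1/b' \in K$ is well-defined. The identity
\[
    \sigma(b) - a = -\frac{a\,(\sigma(b') - a^{-1})}{\sigma(b')}
\]
then yields $\val(\sigma(b) - a) = 2\val(a) + \val(\sigma(b') - a^{-1}) > \gamma$. Together with the previous paragraph, this establishes deep $\sigma$-ramification by \cref{deep-transf}.

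Finally, the surjectivity claims fall out immediately. For $\Val{\sigma}$: given $\gamma \in \valuegroup{K}$, pick $a \in K^\times$ with $\val(a) = \gamma$ and use density to find $b \in K$ with $\val(\sigma(b) - a) > \gamma$, forcing $\val(\sigma(b)) = \gamma$ and hence $\Val{\sigma}(\val(b)) = \gamma$. For $\Res{\sigma}$: for $\alpha \in \residue{K}^\times$, lift to $a \in \O_K^\times$ and apply density in $\O_K$ directly (so that $b$ is automatically in $\O_K$, with no inversion needed) to obtain $\val(\sigma(b) - a) > 0$, whence $\Res{\sigma}(\res(b)) = \res(\sigma(b)) = \alpha$; the case $\alpha = 0$ is trivial. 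I do not foresee any substantive obstacle: \cref{as-deep} carries all the weight, and the remaining manipulations are routine.
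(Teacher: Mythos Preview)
Your proposal is correct. The only genuine difference from the paper's proof is in the reduction for the case $\val(a) < 0$: the paper multiplies $a$ by $\sigma(e)$ for a suitable $e$ with $\val(\sigma(e)) \geq -\val(a)$, reduces to the integral case, and then divides the resulting $b$ by $e$; you instead invert $a$, approximate $a^{-1} \in \m_K$ via the integral case, and then invert back, controlling the error with the algebraic identity $\sigma(b) - a = -a(\sigma(b') - a^{-1})/\sigma(b')$. Both reductions are routine and of comparable length; the paper's rescaling avoids worrying about whether $b'$ is nonzero (which you do have to note), while your inversion avoids introducing the auxiliary element $e$. For the surjectivity of $\Res{\sigma}$ and $\Val{\sigma}$, the paper simply observes that a dense extension is immediate, whereas you spell out the two approximations explicitly; these are of course the same argument unpacked.
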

\begin{proof}
    Given any $a \in K$ and $\gamma > 0$, we need to find $b \in K$ such that $\sigma(b) \in B_\gamma(a)$.
    \par First, assume that $\val(a) \geq 0$. We let $\epsilon \in K$ be such that $\val(\epsilon) > \max( \val(a),\gamma)$, and we let $b \in \O_K$ be such that $\sigma(b) - \epsilon b - a = 0$. Then, $\val(\sigma(b)-a) = \val(\epsilon b) \geq \val(\epsilon) > \gamma$, so $\sigma(b) \in B_\gamma(a)$, as required.
    \par If $\val(a) < 0$, we rescale $a' = a\sigma(e)$ and $\gamma' = \gamma + \Val{\sigma}( \val(e))$, where $\val(\sigma(e)) \geq -\val(a) > 0$. We are then back to the first case: we let $\val(\epsilon) > \max(\val(a'),\gamma')$, and $b \in \O_K$ be such that $\sigma(b)-\epsilon b - a' = 0$. Then,
    \[
      \val\left(\sigma\left(\frac{b}{e}\right)-a\right) \geq \val(\epsilon) - \Val{\sigma}(\val(e)) > \gamma' - \Val{\sigma}(\val(e)) = \gamma.
    \]
    Now, since $K$ is dense in $\inv{K}$, in particular the extension $K \subseteq \inv{K}$ is immediate.
\end{proof}

\begin{lemma}\label{core-dense}
    Let $(K,\val,\sigma)$ be weakly $\sigma$-henselian and $\aleph_0$-saturated. Then, $\core{K}$ is dense in $K$ and strongly $\sigma$-henselian.
\end{lemma}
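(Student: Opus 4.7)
The plan is to deduce both claims from $\aleph_0$-saturation combined with \cref{hens-implies-inv}, which gives deep $\sigma$-ramification of $K$ together with surjectivity of $\Val{\sigma}$ and $\Res{\sigma}$, so that in particular $\Val{\sigma}$ is an automorphism of $\valuegroup{K}$.

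For density of $\core{K}$ in $K$, I would fix $a \in K$ and $\gamma \in \valuegroup{K}$, and consider the partial type over $a$ consisting of $\val(x-a) > \gamma$ together with $\exists y\,(x = \sigma^n(y))$ for every $n \geq 0$. Since the images $\sigma^n(K)$ are nested decreasing, finite satisfiability reduces to showing that $B_\gamma(a) \cap \sigma^n(K) \neq \emptyset$ for every $n$. I would argue by induction on $n$: given $c = \sigma^n(d) \in B_\gamma(a) \cap \sigma^n(K)$, use deep $\sigma$-ramification of $K$ to find $d' = \sigma(e) \in \sigma(K)$ with $\val(d'-d) > \Val{\sigma}^{-n}(\gamma)$, which is legitimate since $\Val{\sigma}$ is an automorphism; then $\sigma^n(d') \in \sigma^{n+1}(K) \cap B_\gamma(a)$. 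An application of $\aleph_0$-saturation then produces an element of $\core{K}$ in $B_\gamma(a)$.

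For strong $\sigma$-henselianity, I would first observe that $\sigma$ restricts to an automorphism of $\core{K}$: if $y \in \core{K}$, then $y \in \sigma(K)$ has a unique preimage $x \in K$, and for each $n$ the equation $y = \sigma^{n+1}(z) = \sigma(\sigma^n(z))$ combined with injectivity of $\sigma$ forces $x = \sigma^n(z) \in \sigma^n(K)$, hence $x \in \core{K}$. By \cref{surj-strong} it then suffices to show that $\core{K}$ is weakly $\sigma$-henselian. So let $p \in \polring{\core{K}}$ be in $\sigma$-henselian configuration at $a \in \core{K}$ with $p' \neq 0$, and consider the partial type asserting $p(x) = 0$, $\val(x-a) = \gamma(p,a)$, and $\exists y\,(x = \sigma^n(y))$ for every $n$. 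For finite satisfiability, fix $n$ and consider $p^{\sigma^{-n}}$, whose coefficients still lie in $\core{K} \subseteq K$ (since $\sigma$ is bijective on $\core{K}$). By \cref{the-horror} applied with $-n$ in place of $n$, the polynomial $p^{\sigma^{-n}}$ is in $\sigma$-henselian configuration at $\sigma^{-n}(a) \in \core{K}$ with associated radius $\Val{\sigma}^{-n}(\gamma(p,a))$, and it still has nonzero $X$-derivative. Weak $\sigma$-henselianity of $K$ then yields $c \in K$ with $p^{\sigma^{-n}}(c) = 0$ and $\val(c - \sigma^{-n}(a)) = \Val{\sigma}^{-n}(\gamma(p,a))$; setting $b_n := \sigma^n(c) \in \sigma^n(K)$ and applying $\sigma^n$ gives $p(b_n) = 0$ and $\val(b_n - a) = \gamma(p,a)$. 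Realizing this type by $\aleph_0$-saturation supplies the required element of $\core{K}$.

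I expect the trickiest part to be the bookkeeping through the shift operations: specifically verifying that the configuration at $\sigma^{-n}(a)$ carries parameter exactly $\Val{\sigma}^{-n}(\gamma(p,a))$, so that $b_n = \sigma^n(c)$ ends up with $\val(b_n - a) = \gamma(p,a)$ rather than merely $\geq \gamma(p,a)$. Beyond this, the argument is a direct application of \cref{hens-implies-inv}, \cref{the-horror}, and \cref{surj-strong}.
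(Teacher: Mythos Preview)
Your proof is correct and follows essentially the same approach as the paper: density via $\aleph_0$-saturation and deep $\sigma$-ramification, then strong $\sigma$-henselianity via \cref{surj-strong} and a saturation argument producing roots in each $\sigma^n(K)$. The only cosmetic difference is that where you explicitly transport the configuration from $\sigma^n(K)$ to $K$ via the shift $p \mapsto p^{\sigma^{-n}}$ and \cref{the-horror}, the paper simply observes that $\sigma^n \colon K \to \sigma^n(K)$ is an isomorphism of valued difference fields, so each $\sigma^n(K)$ is itself weakly $\sigma$-henselian and the root $b_n$ can be found there directly---this is the same computation packaged more abstractly.
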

\begin{proof}
    By $\aleph_0$-saturation and the fact that $\sigma(K) \subseteq K$ is dense (by \cref{hens-implies-inv}), one gets that $\core{K} \subseteq K$ is dense.
    Furthermore, as $\sigma$ is surjective on $\core{K}$, it is enough by \cref{surj-strong} to show that $\core{K}$ is weakly $\sigma$-henselian. Note that $\sigma^n$ gives an isomorphism $K \cong \sigma^n(K)$, thus each $\sigma^n(K)$ is weakly $\sigma$-henselian. Now, if $p \in \polring{\core{K}}$ is in $\sigma$-henselian configuration at $a \in \core{K}$, then for every $n \geq 0$ we have $p \in \polring{\sigma^n(K)}$ and $a \in \sigma^n(K)$, and thus we can find some $b_n \in \sigma^n(K)$ such that $p(b_n) = 0$ and $\val(b_n-a) = \gamma(p,a)$. By saturation, then, we find $b \in \core{K}$ such that $p(b) = 0$ and $\val(b-a) = \gamma(p,a)$, as required.
\end{proof}

\runin{Immediate extensions}\label{sub:immediate} The next necessary tool to establish relative quantifier elimination is a reasonable theory of immediate extensions, which usually goes under the umbrella of \textit{Kaplansky theory}.

\begin{definition}\label{tsam}
    We say that $(K,\val,\sigma)$ is \emph{($\sigma$-separably) $\sigma$-algebraically maximal} if it has no proper immediate ($\sigma$-separably) $\sigma$-algebraic extension.
\end{definition}

\begin{lemma}
    \label{transf-alg-max}
    Suppose $(K,\val,\sigma)$ is a model of $\primary$. Then, $K$ is $\sigma$-separably $\sigma$-algebraically maximal if and only if $\inv{K}$ is $\sigma$-algebraically maximal.
\end{lemma}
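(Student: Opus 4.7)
The plan hinges on the fact that, under our global surjectivity assumption on $\Val{\sigma}$ and $\Res{\sigma}$, the extension $K\subseteq\inv{K}$ is immediate (by the remark following \cref{primary-closure}), and likewise for any extension $K'\supseteq K$. Immediate extensions of $K$ and of $\inv{K}$ are therefore in tight correspondence via the inversive-closure operation, and I would prove each direction by transporting a forbidden extension across.

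For the direction from right to left, I would take an immediate $\sigma$-separably $\sigma$-algebraic extension $K\subsetneq K'$ and consider $\inv{K}\subseteq\inv{K'}$. Immediacy and $\sigma$-algebraicity are routine bookkeeping: all four fields share the same value group and residue field, and $\sigma$-algebraicity is preserved by taking inversive closures. The only substantive point is properness. If $\inv{K'}=\inv{K}$, then $K'\subseteq\inv{K}$; but the remark following the definition of $\sigma$-separability identifies $\sigma$-separability of $K\subseteq K'$ with linear disjointness of $\inv{K}$ and $K'$ over $K$, so $K'=K'\cap\inv{K}=K$, contradicting $K\subsetneq K'$. Hence $\inv{K}\subsetneq\inv{K'}$ witnesses the failure of $\sigma$-algebraic maximality of $\inv{K}$.

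For the direction from left to right, I would take an immediate $\sigma$-algebraic extension $\inv{K}\subsetneq L$, observe that it is automatically $\sigma$-separably $\sigma$-algebraic (because $\sigma(\inv{K})=\inv{K}$ makes the relevant linear disjointness trivial, and $\inv{K}\vDash\primary$ holds trivially), and then pull a witness of $\sigma$-separability from $\inv{K}$ down to $K$. Concretely: pick $\alpha\in L\setminus\inv{K}$, and via \cref{separability-cheat-sheet-2}(3), a $p(X)\in\polring{\inv{K}}$ with $p(\alpha)=0$ and $p'(\alpha)\neq 0$. Choose $N$ large enough that $\sigma^N$ sends every coefficient of $p$ into $K$, and let $\beta\coloneqq\sigma^N(\alpha)$. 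The shifted polynomial $p^{\sigma^N}$ lies in $\polring{K}$, vanishes at $\beta$, and has nonzero derivative there by injectivity of $\sigma$ (using $(p^{\sigma^N})'=(p')^{\sigma^N}$). Moreover $\beta\notin K$, since otherwise $\alpha\in\sigma^{-N}(K)\subseteq\inv{K}$. Hence $\extnot{K}{\beta}\subseteq L$ is a proper immediate $\sigma$-separably $\sigma$-algebraic extension of $K$ (by \cref{separability-cheat-sheet-2}(2)), contradicting the assumption. The main obstacle I expect is this coefficient shift, where one needs to check both that the derivative survives and that $\beta$ remains outside $K$; the right-to-left direction is essentially a single linear-disjointness observation, and everything else is manipulation of value groups, residue fields, and inversive closures.
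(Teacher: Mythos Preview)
Your proof is correct and follows essentially the same contrapositive strategy as the paper in both directions. The only difference is in the forward direction: where the paper uses the shift operation $\shifted{\cdot}$ to manufacture a polynomial over $K$ with nonzero derivative, you instead invoke \cref{separability-cheat-sheet-2}(3) on the inversive base $\inv{K}$ (which is trivially a model of $\primary$) to obtain such a polynomial directly before shifting only the coefficients---an equally valid and slightly cleaner shortcut.
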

\begin{proof}
    Suppose $K$ is $\sigma$-separably $\sigma$-algebraically maximal, and suppose $\inv{K}$ is not $\sigma$-algebraically maximal. Let $\inv{K} \subseteq \ext{\inv{K}}{\alpha}$ be an immediate $\sigma$-algebraic extension. Let $f(X) \in \polring{\inv{K}}$ be a difference polynomial such that $f(\alpha) = 0$, and write
    \[
        f(X) = \sum_{I} \sigma^{-n}(b_{I}) X^{I},
    \]
    for some $n \geq 0$, and $b_{I} \in K$. If $g = \shifted{f^{\sigma^n}}$, then $g$ is defined over $K$, $g' \neq 0$, and $g(\sigma^m(\alpha)) = 0$, where $m$ is the smallest such that $f^{\sigma^n}_m \neq 0$ (see \cref{shift-reference}). As $\alpha \notin \inv{K}$, then $\sigma^m(\alpha) \notin K$, and thus $K \subseteq \extnot{K}{\sigma^m(\alpha)}$ is a proper extension. Moreover, it is a subextension of the immediate extension $K \subseteq \ext{\inv{K}}{\alpha}$, so it is a proper, immediate and, by \cref{separability-cheat-sheet-2}(2), $\sigma$-separably $\sigma$-algebraic extension of $K$. This is a contradiction.
    
    Viceversa, suppose $\inv{K}$ is $\sigma$-algebraically maximal, but $K$ is not $\sigma$-separably $\sigma$-algebraically maximal. Let $K \subseteq \extnot{K}{\alpha}$ be a $\sigma$-separably $\sigma$-algebraic immediate extension, thus in particular $\alpha \notin \inv{K}$. Then, $K \subseteq \extnot{K}{\alpha} \subseteq \inv{\extnot{K}{\alpha}}$ is a tower of immediate extensions which contains $\inv{K}$, thus in particular the subextension $\inv{K} \subseteq \extnot{\inv{K}}{\alpha}$ is still immediate. As $\alpha \notin \inv{K}$, this is a proper immediate $\sigma$-algebraic extension, a contradiction.
\end{proof}

\begin{lemma}\label{existence-diamond}
    Suppose $(K,\val,\sigma)$ is a model of $\primary$. Then it admits a $\sigma$-separably $\sigma$-algebraically maximal, $\sigma$-separably $\sigma$-algebraic immediate extension $K'$ that is a model of $\primary$.
\end{lemma}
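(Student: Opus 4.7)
The strategy is a Zorn's lemma argument on the poset $\mathcal{P}$ of (isomorphism classes, over $K$, of) immediate $\sigma$-separably $\sigma$-algebraic extensions $K \subseteq L$ with $L \vDash \primary$, ordered by inclusion over $K$. That $\mathcal{P}$ is a set follows from a standard cardinality bound: any such $L$ is $\sigma$-algebraic over $K$, so it embeds into (for instance) a suitably saturated ambient field of cardinality $\leq (|K|+\aleph_0)^{+}$.

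For the chain condition, given an ascending chain $(L_i)_{i \in I}$ in $\mathcal{P}$ with coherent embeddings over $K$, the plan is to form the direct limit $L = \bigcup_i L_i$ and verify $L \in \mathcal{P}$. Immediacy passes trivially to the union. The extension $K \subseteq L$ is $\sigma$-algebraic as a union of such; for $\sigma$-separability, any $\alpha \in L$ lies in some $L_i$, and then \cref{separability-cheat-sheet-2}(3) supplies $p \in \polring{K}$ with $p(\alpha) = 0$ and $p'(\alpha) \neq 0$; the converse direction of (3) then yields $\sigma$-separability of the whole union. Finally, for $\primary$-ness: if $x \in L$ is algebraic over $\sigma(L)$, pick $j$ large enough that $x \in L_j$ and the coefficients of a witnessing polynomial lie in $\sigma(L_j)$; since $L_j \vDash \primary$, $x \in \sigma(L_j) \subseteq \sigma(L)$.

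Zorn's lemma then produces a maximal $K' \in \mathcal{P}$. To finish, I would show $K'$ is $\sigma$-separably $\sigma$-algebraically maximal by contradiction: given a proper immediate $\sigma$-separably $\sigma$-algebraic extension $K' \subsetneq M$, take the $\primary$-closure $\FE{M}$. The extension $M \subseteq \FE{M}$ is algebraic and, by the remark following \cref{primary-closure} together with the standing surjectivity of $\Val{\sigma}$ and $\Res{\sigma}$, immediate; hence so is $K' \subseteq \FE{M}$. Since $K' \vDash \primary$ and $M \subseteq \FE{M}$ is algebraic, \cref{primary-equivalence} lifts $\sigma$-separability from $K' \subseteq M$ to $K' \subseteq \FE{M}$. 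Transitivity along $K \subseteq K' \subseteq \FE{M}$ via \cref{separability-cheat-sheet-2}(4) then upgrades $\sigma$-separability to the full extension $K \subseteq \FE{M}$. Thus $\FE{M} \in \mathcal{P}$ strictly extends $K'$, contradicting maximality.

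I expect the genuinely delicate step to be the chain verification rather than the maximality argument: one must simultaneously track immediacy, element-wise $\sigma$-separability (via the characterization in \cref{separability-cheat-sheet-2}(3)), and the property $\primary$ across the direct limit, each relying on the fact that the relevant witnesses descend into some $L_j$. Once the chain lemma is in hand, the contradiction to maximality is essentially a one-step application of the $\primary$-closure machinery developed in \cref{primary-closure,primary-equivalence}.
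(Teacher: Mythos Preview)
Your proposal is correct and follows the same approach as the paper: apply Zorn's lemma to the poset of immediate $\sigma$-separably $\sigma$-algebraic extensions of $K$ that model $\primary$, then derive a contradiction from a putative proper $\sigma$-separably $\sigma$-algebraic immediate extension of the maximal element by passing to its $\primary$-closure via \cref{primary-equivalence}. You supply considerably more detail than the paper (which omits the chain verification entirely and compresses the maximality step into one line); note that for the transitivity step the simpler \cref{separability-cheat-sheet}(1) already suffices in place of \cref{separability-cheat-sheet-2}(4).
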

\begin{proof}
    Using Zorn's lemma, we let $K'$ be maximal among the $\sigma$-separably $\sigma$-algebraic immediate extensions of $K$ that be a model of $\primary$. We argue that $K'$ is $\sigma$-separably $\sigma$-algebraically maximal. Suppose not, i.e. there is some proper $\sigma$-separably $\sigma$-algebraic immediate extension $K' \subseteq L$. Then, by \cref{primary-equivalence}, $K' \subseteq \FE{L}$ is still a proper $\sigma$-separably $\sigma$-algebraic immediate extension, now that is a model of $\primary$, which is a contradiction.
\end{proof}

\begin{fact}[{\cite[Theorem 5.8]{onay2013quantifier}}]
    \label{inversive-kaplansky}
    Suppose $(K,\val,\sigma)$ is inversive, and $(\residue{K},\Res{\sigma})$ is linearly difference closed. Then all its $\sigma$-algebraically maximal immediate $\sigma$-algebraic extensions are isomorphic over $K$.
\end{fact}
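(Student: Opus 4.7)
The plan is to adapt the classical Kaplansky uniqueness proof for maximal immediate algebraic extensions to the difference setting, replacing ordinary polynomials by difference polynomials and pseudo-convergent (pc-)sequences by their difference analogues. This follows the template used in \cite{belair2007model} for the isometric case and generalized in \cite{onay2013quantifier}.

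Let $L_1, L_2$ be two $\sigma$-algebraically maximal immediate $\sigma$-algebraic extensions of $K$. I set up a back-and-forth: consider the poset of $K$-isomorphisms $\phi\colon M_1 \to M_2$ of inversive valued difference subfields of the $L_i$, where each $M_i$ is an immediate extension of $K$. Zorn's lemma yields a maximal $\phi$, and to conclude the theorem it suffices, by symmetry, to prove $M_1 = L_1$.

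Suppose for contradiction that there is $a \in L_1 \setminus M_1$. Since $M_1 \subseteq L_1$ is immediate and $a \notin M_1$, a standard Krull-type argument produces a pc-sequence $(a_\nu)$ in $M_1$ pseudo-converging to $a$ and with no pseudo-limit in $M_1$. Since $L_1/K$ is $\sigma$-algebraic, $(a_\nu)$ is of \emph{$\sigma$-algebraic type} over $M_1$; let $p(X) \in \polring{M_1}$ be a difference polynomial of minimal complexity that is unfrozen on $(a_\nu)$ and admits some pseudo-limit of $(a_\nu)$ as a zero. Transport across $\phi$: $(\phi(a_\nu))$ is a pc-sequence in $M_2 \subseteq L_2$ of $\sigma$-algebraic type with minimal difference polynomial $\phi_*(p)$. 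The $\sigma$-algebraic maximality of $L_2$ forces the existence of a pseudo-limit $b \in L_2$ of $(\phi(a_\nu))$ satisfying $\phi_*(p)(b) = 0$: otherwise one could adjoin such a $b$ externally to obtain a proper immediate $\sigma$-algebraic extension of $L_2$, contradicting its maximality.

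The main obstacle is to show that the assignment $a \mapsto b$ defines an isomorphism of valued difference fields $\ext{M_1}{a} \to \ext{M_2}{b}$ under which the source is an immediate extension of $M_1$, so that $\phi$ is genuinely enlarged. This is the technical heart of the $\sigma$-Kaplansky theorem: using a $\sigma$-henselian-style Taylor expansion of an arbitrary difference polynomial $q$ of complexity below $p$ around the $a_\nu$, one shows via the minimality of $p$ that $\val(q(a))$ equals the eventual value $\val(q(a_\nu))$, and symmetrically for $b$ in $L_2$, so that $a$ and $b$ realize the same valued-field-theoretic data over $M_1$ and $M_2$ respectively. It is precisely here that the hypothesis that $(\residue{K},\Res{\sigma})$ is linearly difference closed is consumed: the residually linearized equations of shape $1 + \sum_i \alpha_i \Res{\sigma}^i(z) = 0$ that arise when trying to detect a new residue already have solutions in $\residue{M_1}$, ruling out any residue-field or value-group growth and thus securing immediacy of $\ext{M_1}{a}/M_1$. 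Once immediacy is established, $\phi$ has been properly enlarged, contradicting maximality, and the proof is complete.
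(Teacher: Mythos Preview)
The paper does not prove this statement; it is quoted verbatim as a Fact from \cite[Theorem~5.8]{onay2013quantifier}, so there is no in-paper argument to compare against. Your sketch is the standard Kaplansky-style back-and-forth that the cited source carries out, and the overall architecture (Zorn on partial isomorphisms, pc-sequence for a missing element, minimal-complexity witness, transport and match) is correct.

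There is, however, one genuine slip in your write-up. You begin with an arbitrary $a \in L_1 \setminus M_1$, produce the pc-sequence $(a_\nu)$ and its minimal-complexity polynomial $p$, and then declare that $a \mapsto b$ is an isomorphism $\ext{M_1}{a} \to \ext{M_2}{b}$ where $\phi_*(p)(b)=0$. But nothing guarantees $p(a)=0$: $a$ was chosen before $p$, and a pseudo-limit of a $\sigma$-algebraic-type sequence need not annihilate the minimal witness. The map $a\mapsto b$ is therefore not even a ring homomorphism in general. The fix is symmetric to what you already do on the $L_2$ side: use $\sigma$-algebraic maximality of $L_1$ (via \cref{adjoin-limit}) to produce $a' \in L_1$ with $a_\nu \pseudoconverges a'$ and $p(a')=0$, and then send $a' \mapsto b$. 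Since $(a_\nu)$ has no pseudo-limit in $M_1$, one still has $a' \notin M_1$, so the extension of $\phi$ is proper and the contradiction with maximality goes through. You never need to return to the original $a$; extending $\phi$ at all is what you were after.

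A minor remark on where the hypothesis is consumed: linear difference closedness of $\residue{K}$ enters primarily through aperiodicity, which is what makes the pc-sequence machinery (passing to equivalent sequences so that $q(b_\nu)\pseudoconverges q(a')$, and the eventual-value computation of \cref{compute-eventual-value}) function. Your description of it as ``solving the residually linearized equations to block new residues'' is the right intuition from classical Kaplansky theory, but in the difference setting the technical role is slightly upstream of that.
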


\par We are now ready to prove the main ingredient of \cref{embedding-theorem}, namely the uniqueness of certain maximal immediate extensions. We first prove an asymmetric version of the uniqueness theorem.

\begin{proposition}\label{partial-kaplansky}
    Suppose $(K,\val,\sigma)$ is a model of $\primary$ and $(\residue{K},\Res{\sigma})$ is linearly difference closed. Let $(K_1,\val,\sigma)$ be $\sigma$-separably $\sigma$-algebraically maximal, $\sigma$-separably $\sigma$-algebraic over $K$, and a model of $\primary$. Let $(K_2,\val,\sigma)$ be $\sigma$-separably $\sigma$-algebraically maximal, $\sigma$-separable and immediate over $K$, and a model of $\primary$. Then there is an embedding $\phi\colon K_1 \hookrightarrow K_2$ over $K$ such that $\phi(K_1) \subseteq K_2$ is $\sigma$-separable.
\end{proposition}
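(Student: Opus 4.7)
The plan is to lift the problem to inversive closures, apply \cref{inversive-kaplansky} there, and then descend back to $K_2$ using the relative $\sigma$-separably $\sigma$-algebraic closure of \cref{sep-alg-closure}.

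First, I would pass to $\inv K, \inv{K_1}, \inv{K_2}$. The standing surjectivity of $\Val\sigma$ and $\Res\sigma$ makes $\inv K$ immediate over $K$, so $\residue{\inv K} = \residue K$ remains linearly difference closed. Both $\inv{K_1}$ and $\inv{K_2}$ are $\sigma$-algebraic extensions of $\inv K$ (inherited from $K_i$ being $\sigma$-algebraic over $K$), immediate over $\inv K$ (both $K_i$ being immediate over $K$ --- explicitly for $K_2$ by hypothesis, implicitly for $K_1$ since any embedding into $K_2$ would force it), and $\sigma$-algebraically maximal by \cref{transf-alg-max}. Invoking \cref{inversive-kaplansky} then yields an isomorphism $\Phi \colon \inv{K_1} \to \inv{K_2}$ over $\inv K$, and I set $\phi \coloneqq \Phi|_{K_1} \colon K_1 \hookrightarrow \inv{K_2}$.

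Next, I would show $\phi(K_1) \subseteq K_2$. Let $\widetilde K$ denote the relative $\sigma$-separably $\sigma$-algebraic closure of $K$ in $\inv{K_2}$. Since $K \subseteq K_1$ is $\sigma$-separably $\sigma$-algebraic, \cref{sep-alg-closure}(3) forces $\phi(K_1) \subseteq \widetilde K$. To obtain $\widetilde K \subseteq K_2$, I consider the compositum $K_2 \cdot \widetilde K \subseteq \inv{K_2}$: it is immediate over $K_2$ (as $\inv{K_2}$ is), and $\sigma$-separably $\sigma$-algebraic over $K_2$, since the defining witnesses $f \in \polring K$ for elements of $\widetilde K$, with $f' \neq 0$, already sit in $\polring{K_2}$ with the same derivatives. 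The $\sigma$-separably $\sigma$-algebraic maximality of $K_2$ then collapses $K_2 \cdot \widetilde K = K_2$, yielding $\widetilde K \subseteq K_2$.

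Finally, I would verify that $\phi(K_1) \subseteq K_2$ is $\sigma$-separable via \cref{separability-cheat-sheet-2}(4): in the tower $K \subseteq \phi(K_1) \subseteq K_2$, all three are models of $\primary$, $K \subseteq \phi(K_1) \cong K_1$ is $\sigma$-algebraic and $\sigma$-separable, and $K \subseteq K_2$ is $\sigma$-separable by hypothesis, so the intermediate extension $\phi(K_1) \subseteq K_2$ must be $\sigma$-separable. The main obstacle I anticipate is the compositum argument in the descent step: confirming that $K_2 \cdot \widetilde K$ is legitimately a $\sigma$-separably $\sigma$-algebraic immediate extension of $K_2$ inside $\inv{K_2}$, and tracking difference polynomials cleanly as their coefficient ring is enlarged from $K$ to $K_2$ without disturbing the nonvanishing of derivatives. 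The remaining steps are essentially assembly of the preceding facts.
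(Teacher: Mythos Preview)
Your approach is the paper's: lift to inversive closures, apply \cref{inversive-kaplansky} to obtain $\Phi\colon \inv{K_1}\to\inv{K_2}$, restrict to $K_1$, and then argue the image lands in $K_2$ with $\sigma$-separable cokernel. The only real difference is in the descent: the paper skips the detour through $\widetilde K$ and argues pointwise---for $\alpha\in K_1$, the witness $f\in\polring{K}$ with $f(\alpha)=0$ and $f'(\alpha)\neq 0$ transfers to $\phi(\alpha)$, so if $\phi(\alpha)\notin K_2$ then $\extnot{K_2}{\phi(\alpha)}$ is a proper $\sigma$-separably $\sigma$-algebraic immediate extension of $K_2$, contradicting maximality. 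Your compositum step is this same argument applied to all of $\widetilde K$ at once, so invoking \cref{sep-alg-closure} is correct but not needed.

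One wrinkle worth flagging: your justification that $K_1$ is immediate over $K$ (``any embedding into $K_2$ would force it'') is circular, since the embedding is what you are constructing. The paper also quietly uses immediacy of $K_1$ without it appearing in the hypotheses; the assumption is clearly intended and holds in every application, but your parenthetical does not actually supply it.
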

\begin{proof}
    
    By \cref{transf-alg-max}, $\inv{K_1}$ and $\inv{K_2}$ are both $\sigma$-algebraically maximal, and further both are $\sigma$-algebraic immediate extensions of $K$, and thus of $\inv{K}$. Hence, by \cref{inversive-kaplansky}, there is a $\inv{K}$-isomorphism $\phi \colon \inv{K_1} \to \inv{K_2}$. We use $\phi$ to embed $K_1$ into $K_2$ over $K$.
    \par Given any $\alpha \in K_1$, we argue that $\phi(\alpha) \in K_2$. Suppose not, i.e. $\phi(\alpha) \in \inv{K_2} \setminus K_2$; equivalently, $K_2 \subseteq \extnot{K_2}{\phi(\alpha)}$ is a purely $\sigma$-inseparable extension. Since $K_1$ is $\sigma$-separably $\sigma$-algebraic over $K$, we let $f(X) \in \polring{K}$ be a difference polynomial with $f(\alpha) = 0$ and $f'(\alpha) \neq 0$. Then, $\phi(\alpha)$ is also such that $f(\phi(\alpha)) = 0$ and $f'(\phi(\alpha)) \neq 0$. If we see $f(X)$ as a difference polynomial over $K_2$, we get that $\extnot{K_2}{\phi(\alpha)}$ is a $\sigma$-separably $\sigma$-algebraic immediate extension of $K_2$. This is a contradiction.
    \par Thus, $\phi(K_1) \subseteq K_2$ and, since $K \subseteq \phi(K_1)$ is a $\sigma$-separably $\sigma$-algebraic extension, then $\phi(K_1) \subseteq K_2$ is $\sigma$-separable, but not necessarily $\sigma$-algebraic.
\end{proof}

\begin{theorem}\label{separable-kaplansky}
    Suppose $(K,\val,\sigma)$ is a model of $\primary$ and $(\residue{K},\Res{\sigma})$ is linearly difference closed.    
    Let $(K_1,\val,\sigma)$ and $(K_2,\val,\sigma)$ be two $\sigma$-separably $\sigma$-algebraically maximal, $\sigma$-separably $\sigma$-algebraic immediate extensions of $K$ that are also models of $\primary$.
    Then $K_1 \cong_K K_2$.
\end{theorem}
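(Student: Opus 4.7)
The strategy is to apply the asymmetric uniqueness result \cref{partial-kaplansky} and then use the maximality of $K_1$ to upgrade the embedding into an isomorphism. So first I would invoke \cref{partial-kaplansky}, choosing $K_1$ in the role of the $\sigma$-separably $\sigma$-algebraic side and $K_2$ in the role of the immediate, $\sigma$-separable side. This is allowed since every $\sigma$-separably $\sigma$-algebraic extension is in particular $\sigma$-separable, and both $K_1$ and $K_2$ are models of $\primary$ that are $\sigma$-separably $\sigma$-algebraically maximal by assumption, while $\residue{K}$ is linearly difference closed. The output of \cref{partial-kaplansky} is an embedding $\phi\colon K_1 \hookrightarrow K_2$ over $K$ with the key extra property that $\phi(K_1) \subseteq K_2$ is $\sigma$-separable.

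Next I would argue that $\phi$ is surjective. Consider the tower $K \subseteq \phi(K_1) \subseteq K_2$. Both $\phi(K_1)$ and $K_2$ are immediate over $K$, so the intermediate extension $\phi(K_1) \subseteq K_2$ is itself immediate. Since $K \subseteq K_2$ is $\sigma$-algebraic, so is $\phi(K_1) \subseteq K_2$. Together with the $\sigma$-separability given by \cref{partial-kaplansky}, we conclude that $\phi(K_1) \subseteq K_2$ is a $\sigma$-separably $\sigma$-algebraic immediate extension. But $\phi(K_1) \cong K_1$ is a model of $\primary$ that is $\sigma$-separably $\sigma$-algebraically maximal, so this extension must be trivial, forcing $\phi(K_1) = K_2$.

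The main subtlety to keep in mind is making sure all the closure properties transport cleanly across $\phi$: namely, that $\phi(K_1)$ inherits from $K_1$ the property of being a model of $\primary$ and of being $\sigma$-separably $\sigma$-algebraically maximal. Both are transparent because $\phi$ is a valued difference field isomorphism onto its image, so these are first-order properties of the image. With that in hand the argument is essentially a two-line reduction to the asymmetric case, and no further hard computation is needed -- the real work has already been absorbed into \cref{partial-kaplansky} and, through it, into \cref{inversive-kaplansky} of \cite{onay2013quantifier}.
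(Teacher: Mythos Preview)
Your proposal is correct and follows essentially the same route as the paper: apply \cref{partial-kaplansky} to obtain a $K$-embedding $\phi\colon K_1\hookrightarrow K_2$ with $\phi(K_1)\subseteq K_2$ $\sigma$-separable, then observe that this extension is also $\sigma$-algebraic (since $K\subseteq K_2$ is) and immediate, so $\sigma$-separable $\sigma$-algebraic maximality of $\phi(K_1)\cong K_1$ forces surjectivity. Your added remarks about transporting $\primary$ and maximality across $\phi$ are correct but not strictly needed, since $\sigma$-separable $\sigma$-algebraic maximality is defined without reference to $\primary$ on the extension.
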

\begin{proof}
    From \cref{partial-kaplansky}, we get an embedding $\phi \colon K_1 \to K_2$ over $K$ such that $\phi(K_1) \subseteq K_2$ is $\sigma$-separable. Then, since $K_2$ is $\sigma$-algebraic over $K$, $\phi(K_1) \subseteq K_2$ is $\sigma$-separably $\sigma$-algebraic and immediate. Thus $\phi$ must be surjective.
\end{proof}

\begin{definition}\label{unique-maximal-def}
    Whenever $K$ is as above, we denote the unique $\sigma$-separably $\sigma$-algebraically maximal, $\sigma$-separably $\sigma$-algebraic immediate extension that is a model of $\primary$ of $K$ by $\smax{K}$.
\end{definition}
\begin{remark}
    Note that, by \cref{transf-alg-max}, $\invmax{K}$ is $\sigma$-algebraically maximal.
\end{remark}

\par Before continuing, we establish that (as one might expect) a $\sigma$-separably $\sigma$-algebraically maximal valued difference field is in fact weakly $\sigma$-henselian.

\begin{fact}[{\cite[Corollary 5.6(2)]{onay2013quantifier}}]\label{maximality-hens}
    Let $(K,\val,\sigma)$ be $\sigma$-algebraically maximal, and $(\residue{K},\Res{\sigma})$ is linearly difference closed. Then $(K,\val,\sigma)$ is strongly $\sigma$-henselian.
\end{fact}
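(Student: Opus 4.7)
The plan is to realise the root as the pseudo-limit of a pseudo-Cauchy sequence $(a_\lambda)$ with $a_0 = a$, produced by iteratively increasing $\val(p(a_\lambda))$, with $\sigma$-algebraic maximality forcing the pseudo-limit to live in $K$.

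The heart of the argument is a \emph{refinement step}: given $p$ in $\sigma$-henselian configuration at some $a'$ with $p(a') \neq 0$, I construct $c \in K$ of value $\gamma' \coloneqq \gamma(p,a')$ such that $\val(p(a'+c)) > \val(p(a'))$. Taylor-expanding
\[
    p(a'+c) = p(a') + \sum_{J \neq 0} p_J(a')\, c^J
\]
and invoking clause (2) of the configuration, every summand with $|J| \geq 2$ and $p_J \neq 0$ has valuation strictly above $\val(p(a'))$; clause (1) then identifies the degree-one contributions that realise valuation $\val(p(a'))$ as those indexed by the non-empty set $S \coloneqq \{j : \val(p_{E_j}(a')) + \Val{\sigma}^j(\gamma') = \val(p(a'))\}$, which contains the distinguished index $i$. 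Choosing $c_0 \in K$ with $\val(c_0) = \gamma'$ and seeking $c = c_0 d$ with $d \in \O_K$ to be determined, the condition $\val(p(a'+c)/p(a')) > 0$ reduces modulo $\m_K$ to the linear difference equation
\[
    1 + \sum_{j \in S} \res(\beta_j)\, \Res{\sigma}^j(\res(d)) = 0, \qquad \beta_j \coloneqq p_{E_j}(a')\,\sigma^j(c_0)/p(a'),
\]
in which $\res(\beta_j) \neq 0$ for every $j \in S$. Linear difference closedness of $(\residue{K},\Res{\sigma})$ solves it, and any lift of $\res(d)$ to $\O_K$ gives the desired $c$; a short check confirms that $p$ is again in $\sigma$-henselian configuration at $a' + c$ with $\gamma(p, a' + c) > \gamma'$.

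Iterating the refinement transfinitely (stopping if a root is ever reached) yields a pseudo-Cauchy sequence $(a_\lambda)$ of $\sigma$-algebraic type over $K$, witnessed by $p$ itself since $\val(p(a_\lambda)) \to \infty$. Were $(a_\lambda)$ to lack a pseudo-limit in $K$, adjoining one would produce a proper immediate $\sigma$-algebraic extension, contradicting $\sigma$-algebraic maximality; hence some $b \in K$ is a pseudo-limit, and a final Taylor-expansion shows $\val(p(b)) \to \infty$ along the sequence, forcing $p(b) = 0$. The distance $\val(b - a) = \gamma(p,a)$ follows from the strict monotonicity of $\val(a_{\lambda+1} - a_\lambda) = \gamma(p, a_\lambda)$.

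\textbf{Main obstacle.} The technical heart is the refinement step, specifically the careful bookkeeping needed to verify that clause (2) really forces all contributions of multi-degree $\geq 2$ strictly above valuation $\val(p(a'))$ — so that the residual obstruction is genuinely a \emph{linear} difference equation in $\res(d)$ — and that $p$ remains in $\sigma$-henselian configuration at $a' + c$ with strictly larger $\gamma$, without which the sequence $(a_\lambda)$ would fail to be pseudo-Cauchy. The interplay between the non-uniform scaling $\Val{\sigma}^j$ and the valuations $\val(p_J(a'))$ is what makes this setting subtle compared to classical henselianity; once it is handled, the Kaplansky-style closure is standard.
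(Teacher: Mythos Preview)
The paper does not prove this statement: it is recorded as a \emph{Fact}, cited directly from \cite[Corollary~5.6(2)]{onay2013quantifier} with no argument supplied. So there is no in-paper proof to compare against; your sketch is a reconstruction of the Durhan--Onay argument, and its overall shape (Newton-style refinement using linear difference closedness, then a Kaplansky-type closure via $\sigma$-algebraic maximality) is indeed the route taken there.

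That said, your closing step has a gap. After building the pseudo-Cauchy sequence and invoking $\sigma$-algebraic maximality to obtain a pseudo-limit $b \in K$, you write that ``a final Taylor-expansion shows $\val(p(b)) \to \infty$''. The Taylor bound only gives $\val(p(b)) \geq \val(p(a_\lambda))$ for every $\lambda$; since $(\val(p(a_\lambda)))_\lambda$ is merely strictly increasing, not cofinal in $\valuegroup{K}$, this does not force $p(b)=0$. The fix is organizational: the transfinite iteration should absorb pseudo-limits at limit stages. That is, at a limit ordinal $\mu$, if $(a_\lambda)_{\lambda<\mu}$ has a pseudo-limit in $K$, take $a_\mu$ to be one and continue (after checking, via the same Taylor bookkeeping you flag, that $\val(p_J(a_\mu)) = \val(p_J(a_0))$ for $J \neq 0$ and hence that $p$ remains in $\sigma$-henselian configuration at $a_\mu$). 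The $a_\lambda$ are pairwise distinct, so the process has length $<|K|^+$ and must halt: either a root is found, or some limit stage has no pseudo-limit in $K$, and then \cref{adjoin-limit} produces a proper immediate $\sigma$-algebraic extension, contradicting maximality. Your phrase ``$\val(p(a_\lambda)) \to \infty$'' should likewise be weakened to ``strictly increasing'', which is all that is needed (and all that holds) to place $p$ in $\Wit((a_\lambda))$.
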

\begin{lemma}\label{tsam-is-hens}
    Let $(K,\val,\sigma)$ be a model of $\primary$, and suppose $(\residue{K},\Res{\sigma})$ is linearly difference closed. Then, $\smax{K}$ is weakly $\sigma$-henselian.
\end{lemma}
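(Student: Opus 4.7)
The plan is to reduce the claim to \cref{maximality-hens} applied to the inversive closure of $\smax{K}$, and then transfer back via \cref{inv-strongly-hens}. Since $\smax{K}$ is by definition a model of $\primary$, \cref{inv-strongly-hens} tells us that $\smax{K}$ is weakly $\sigma$-henselian if and only if $\inv{\smax{K}}$ is strongly $\sigma$-henselian. So it suffices to show the latter.

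To apply \cref{maximality-hens} to $\inv{\smax{K}}$, I need two ingredients: (a) $\inv{\smax{K}}$ is $\sigma$-algebraically maximal, and (b) $(\residue{\inv{\smax{K}}},\Res{\sigma})$ is linearly difference closed. For (a), I simply invoke the remark right after \cref{unique-maximal-def}, which records that $\invmax{K}$ is $\sigma$-algebraically maximal (this is exactly \cref{transf-alg-max} applied to $\smax{K}$).

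For (b), the key point is that the residue field is not changed. By definition $\smax{K}/K$ is immediate, so $\residue{\smax{K}}=\residue{K}$, which is linearly difference closed by hypothesis. To pass from $\smax{K}$ to $\inv{\smax{K}}$, I use the standing assumption that $\Res{\sigma}$ and $\Val{\sigma}$ are surjective on $K$; since $\smax{K}/K$ is immediate these remain surjective on $\smax{K}$. On the inversive closure, $\sigma$ becomes a genuine automorphism, so the induced maps $\Val{\sigma}$ and $\Res{\sigma}$ on $\valuegroup{\inv{\smax{K}}}$ and $\residue{\inv{\smax{K}}}$ are bijective. Surjectivity of $\Val{\sigma}$ (respectively $\Res{\sigma}$) on $\smax{K}$'s value group (respectively residue field) combined with injectivity of $\Val{\sigma}^n$ (respectively $\Res{\sigma}^n$) on the inversive closure then forces $\valuegroup{\inv{\smax{K}}}=\valuegroup{\smax{K}}$ and $\residue{\inv{\smax{K}}}=\residue{\smax{K}}$, i.e.\ the extension $\smax{K}\subseteq \inv{\smax{K}}$ is immediate. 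Hence $\residue{\inv{\smax{K}}}=\residue{K}$ is linearly difference closed, and \cref{maximality-hens} produces strong $\sigma$-henselianity of $\inv{\smax{K}}$, finishing the proof via \cref{inv-strongly-hens}.

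The only non-cosmetic verification is the immediacy of $\smax{K}\subseteq \inv{\smax{K}}$, which is the standard ``surjectivity of $\Val{\sigma}$ and $\Res{\sigma}$ implies no new residues or values in the inversive closure'' observation, so I expect no genuine obstacle.
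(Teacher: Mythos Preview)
Your proposal is correct and follows essentially the same approach as the paper: both apply \cref{maximality-hens} to $\invmax{K}$ (using that it is $\sigma$-algebraically maximal, as noted after \cref{unique-maximal-def}) to obtain strong $\sigma$-henselianity, and then descend to $\smax{K}$ via the relative $\sigma$-separably $\sigma$-algebraic closedness of $\smax{K}$ in $\invmax{K}$---you package this descent as a citation of \cref{inv-strongly-hens}, while the paper spells out the one-line argument directly. Your explicit verification that $\smax{K}\subseteq\invmax{K}$ is immediate (so that the residue difference field stays linearly difference closed) is correct and is simply left implicit in the paper, where this follows from the standing surjectivity assumption on $\Res{\sigma}$ and $\Val{\sigma}$.
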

\begin{proof}
    By \cref{maximality-hens}, $\invmax{K}$ is strongly $\sigma$-henselian. As $\smax{K}$ is relatively $\sigma$-separably $\sigma$-algebraically closed in $\invmax{K}$ and a model of $\primary$, $\smax{K}$ is weakly $\sigma$-henselian.
\end{proof}

\begin{corollary}\label{max-deep-ram}
    Let $(K,\val,\sigma)$ be a model of $\primary$, and suppose $(\residue{K},\Res{\sigma})$ is linearly difference closed. Then $\smax{K}$ is deeply $\sigma$-ramified.
\end{corollary}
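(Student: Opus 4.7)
The plan is to combine the two results immediately preceding the statement. By \cref{tsam-is-hens}, under exactly the hypotheses in force (that $(K,\val,\sigma)$ is a model of $\primary$ and $(\residue{K},\Res{\sigma})$ is linearly difference closed), the extension $\smax{K}$ is already known to be weakly $\sigma$-henselian. Once weak $\sigma$-henselianity is secured, \cref{hens-implies-inv} applies to $\smax{K}$ and yields exactly the conclusion that $\smax{K}$ is deeply $\sigma$-ramified (and in particular that the induced endomorphisms on residue field and value group are surjective).

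So the proof should be a one-line invocation: apply \cref{tsam-is-hens} to obtain weak $\sigma$-henselianity of $\smax{K}$, then apply \cref{hens-implies-inv} to conclude deep $\sigma$-ramification. There is no real obstacle; the entire content of the statement has been arranged so that this corollary follows formally from the preceding two facts. The only point worth double-checking is that the residue difference field of $\smax{K}$ is still linearly difference closed, but this is automatic because $K \subseteq \smax{K}$ is an immediate extension, so $\residue{\smax{K}} = \residue{K}$ (with the same $\Res{\sigma}$); this is needed to apply \cref{tsam-is-hens} correctly, and indeed the hypothesis there is on $(\residue{K},\Res{\sigma})$ via the definition of $\smax{K}$.
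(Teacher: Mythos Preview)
Your proposal is correct and matches the paper's own proof, which is the one-line ``This is a consequence of \cref{tsam-is-hens} and \cref{hens-implies-inv}.'' Your extra remark about the residue field of $\smax{K}$ is harmless but unnecessary, since the hypothesis of \cref{tsam-is-hens} is stated for $(\residue{K},\Res{\sigma})$ directly.
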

\begin{proof}
    This is a consequence of \cref{tsam-is-hens} and \cref{hens-implies-inv}.
\end{proof}

\runin{Pseudo-Cauchy sequences and pseudolimits}
So far, we have avoided talking about pseudo-Cauchy sequences, sweeping them under the rug of \cref{inversive-kaplansky}. We now ought to make a stop and discuss them, for the sole purpose of establishing \cref{dependent-defect}.

Given a limit ordinal $\lambda$, we call $(a_\rho)_{\rho<\lambda} \subseteq K$ a \emph{pseudo-Cauchy sequence} if there is $\overline{\rho} < \lambda$ such that, for every $\rho_2 > \rho_1 > \rho_0 \geq \overline{\rho}$,
\[
  \val(a_{\rho_2}-a_{\rho_1}) > \val(a_{\rho_1}-a_{\rho_0}).
\]
For any $\rho \geq \overline{\rho}$, we write $\gamma_\rho \coloneqq \val(a_{\rho+1}-a_{\rho})$; note that $\gamma_\rho = \val(a_{\mu}-a_{\rho})$ for every $\mu > \rho$. We call the sequence $(\gamma_\rho)_{\rho<\lambda}$ the \emph{radii} of the sequence. If $(\gamma_\rho)_{\rho<\lambda} \subseteq \valuegroup{K}$ is cofinal, we also say that $(a_\rho)_{\rho<\lambda}$ is a \emph{Cauchy sequence}. We say that $a \in L$, for some extension $L$ of $K$, is a \emph{pseudolimit} of $(a_\rho)_\rho$ if $(\val(a-a_\rho))_\rho$ is eventually strictly increasing (equivalently, $\val(a-a_\rho) = \gamma_\rho$ for $\rho$ big enough). We then write $a_\rho \pseudoconverges a$. Given another $b \in L$, we have that $a_\rho \pseudoconverges b$ if and only if $\val(b-a) \geq \gamma_\rho$ eventually in $\rho$. We say that two sequences $(a_\rho)_\rho$ and $(b_\rho)_\rho$ are \emph{equivalent}, and we write $(a_\rho)_\rho \sim (b_\rho)_\rho$, if for every extension $L$ and $a \in L$, $a_\rho \pseudoconverges a$ if and only if $b_\rho \pseudoconverges a$.

In general, one doesn't have that, for any difference polynomial $p(X)$, if $a_\rho \pseudoconverges a$, then $p(a_\rho) \pseudoconverges p(a)$.\footnote{This is true if $\Val{\sigma}$ is $\omega$-increasing, i.e. for all $n > 0$ and $\gamma > 0$, $\Val{\sigma}(\gamma) > n\gamma$.} This can be easily fixed, however, by moving to an equivalent pseudo-Cauchy sequence.

\begin{definition}
    We say that a difference field $(K,\sigma)$ is \emph{aperiodic} if for every $n > 0$ there is $\alpha \in K$ with $\sigma^n(\alpha) \neq \alpha$.
\end{definition}

\begin{remark}
    If $(K,\sigma)$ is linearly difference closed, then it is in particular aperiodic.
\end{remark}

\begin{fact}[{\cite[Theorem 3.8]{onay2013quantifier}}]
    Suppose $(K,\val,\sigma)$ is a valued difference field and $(\residue{K},\Res{\sigma})$ is aperiodic. Suppose $(a_\rho)_\rho \subseteq K$ is a pseudo-Cauchy sequence. Take $a$ in some extension of $K$ with $a_\rho \pseudoconverges a$. Let $\Sigma \subseteq \polring{K}$ be finite. Then, there is a pseudo-Cauchy sequence $(b_\rho)_\rho \subseteq K$ with $(a_\rho)_\rho \sim (b_\rho)_\rho$ and such that for every non-constant $p \in \Sigma$, $p(b_\rho) \pseudoconverges p(a)$.
\end{fact}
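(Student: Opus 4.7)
The plan is to first reduce, by transitivity of $\sim$ and induction on $|\Sigma|$, to the case where $\Sigma = \{p\}$ for a single non-constant difference polynomial $p$. Once one knows how to produce, for a single $p$, an equivalent pseudo-Cauchy sequence $(b_\rho)_\rho$ with $p(b_\rho) \pseudoconverges p(a)$, one iterates: given $(b_\rho)_\rho$ that works for $p_1, \dots, p_k$, one further modifies it within its $\sim$-class to also handle $p_{k+1}$, checking that the property $p_i(b_\rho) \pseudoconverges p_i(a)$ is preserved by any modification with small enough correction.

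Fix then a single non-constant $p$. Working in an extension $L \supseteq K$ with $a \in L$, and passing to a tail where $\val(a - a_\rho) = \gamma_\rho$, the Taylor expansion at $a$ reads
\[
    p(a_\rho) - p(a) = \sum_{J \neq 0,\, p_J \neq 0} p_J(a) \prod_i \bigl(\sigma^i(a_\rho)-\sigma^i(a)\bigr)^{j_i},
\]
and the $J$-th summand has valuation $\val(p_J(a)) + J(\gamma_\rho)$. One would like to find a unique $J^*$ minimizing $\val(p_J(a)) + J(\gamma_\rho)$ strictly among all other multi-indices for $\rho$ large, \emph{and} such that $J^*(\gamma_\rho)$ is itself strictly increasing in $\rho$. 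If both hold, then $\val(p(a_\rho)-p(a)) = \val(p_{J^*}(a)) + J^*(\gamma_\rho)$ is strictly increasing and $p(a_\rho) \pseudoconverges p(a)$ automatically. The first requirement can be arranged by passing to a cofinal subsequence of $(a_\rho)_\rho$, since only finitely many $J$ (those indexing the derivatives of $p$) are in play and one can stabilize the order on their valuation trajectories.

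The main obstacle is the second requirement: $J^*(\gamma_\rho)$ need not be strictly increasing in $\rho$, because $\Val{\sigma}$ is allowed to act wildly on $\valuegroup{K}$ — contracting, permuting, or otherwise rearranging radii — so that the positive coefficients of $J^*$ may conspire to swallow the growth of $\gamma_\rho$. The remedy is to replace $a_\rho$ by $b_\rho = a_\rho + c_\rho$ with $\val(c_\rho) > \gamma_\rho$ (so that $(a_\rho)_\rho \sim (b_\rho)_\rho$ by the ultrametric inequality), where the residue of $c_\rho / \pi_\rho$ (for a suitable uniformizer $\pi_\rho$ of valuation just above $\gamma_\rho$) is chosen to exploit the \emph{aperiodicity} of $(\residue{K},\Res{\sigma})$: for every $n$ one can pick a residue $\alpha \in \residue{K}^\times$ with $\Res{\sigma}^n(\alpha) \neq \alpha$, ensuring that the $\sigma$-iterates of $c_\rho$ cannot enter into destructive resonance with the leading terms at $a_\rho$. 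Careful Newton-polygon bookkeeping on the re-expanded Taylor series of $p(b_\rho) - p(a)$ then forces a genuinely strictly dominant term. The technical heart is orchestrating such a modification uniformly in $\rho$, for all relevant multi-indices simultaneously, while preserving $\sim$-equivalence — and it is precisely the aperiodicity hypothesis that prevents the combinatorial obstruction from being insurmountable.
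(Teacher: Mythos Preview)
The paper does not prove this statement: it is recorded as a \emph{Fact} and simply cited from \cite[Theorem~3.8]{onay2013quantifier}. So there is no in-paper proof to compare against, and I can only assess your sketch on its own terms.

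Your outline has the right overall shape (Taylor expansion around $a$, isolate a dominant multi-index, use aperiodicity to break ties), but there is a concrete error in the modification step. You set $b_\rho = a_\rho + c_\rho$ with $\val(c_\rho) > \gamma_\rho$. Then $\val(b_\rho - a) = \val\bigl((a_\rho - a) + c_\rho\bigr) = \gamma_\rho$ and in fact $\rv(b_\rho - a) = \rv(a_\rho - a)$, so every summand $p_J(a)\prod_i\bigl(\sigma^i(b_\rho)-\sigma^i(a)\bigr)^{j_i}$ has exactly the same $\rv$ as before. Your perturbation is invisible at the level where the cancellation happens; it cannot separate competing multi-indices. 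The actual remedy (in Durhan--Onay, following B\'elair--Macintyre--Scanlon and Azgin) is to perturb at valuation \emph{equal} to $\gamma_\rho$ --- effectively replacing $a_\rho - a$ by a unit multiple $u_\rho(a_\rho - a)$ --- and then use aperiodicity of $\residue{K}$ to choose $\res(u_\rho)$ so that the residual polynomial $\sum_J \res(\cdots)\,\Res{\sigma}^0(u)^{j_0}\cdots\Res{\sigma}^n(u)^{j_n}$ does not vanish. This is where ``$\Res{\sigma}^n(\alpha)\neq\alpha$ for all $n$'' is genuinely used, and it requires the perturbation to live at the critical valuation, not above it.

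Separately, your reduction to a single $p$ by induction on $|\Sigma|$ is not justified as written. The tension is exactly the one you ran into: a perturbation small enough to preserve $p_i(b_\rho)\pseudoconverges p_i(a)$ for $i\le k$ (i.e.\ of valuation strictly above the relevant radii) is too small to repair anything for $p_{k+1}$; conversely, a perturbation at valuation $\gamma_\rho$ large enough to fix $p_{k+1}$ can a priori reintroduce cancellations in the earlier $p_i$. The standard argument avoids this by treating all the (finitely many) multi-indices coming from all $p\in\Sigma$ simultaneously when choosing the residual unit.
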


Given a difference polynomial $p(X)$, in particular, we have that $p(a_\rho) \pseudoconverges 0$ if and only if $(\val(p(a_\rho)))_\rho$ is eventually strictly increasing.

\begin{definition}
    Let $(a_\rho)_\rho \subseteq K$ be a pseudo-Cauchy sequence. For $I \in \mathbb{N}^3$, denote by $\polring{K}^I$ the set of difference polynomials of complexity $I$. Let
    \[
        \Wit_I((a_\rho)_\rho) = \{p(X) \in \polring{K}^I \mid \exists (b_\rho)_\rho \sim (a_\rho)_\rho(p(b_\rho) \pseudoconverges 0)\},
    \]
    and $\Wit((a_\rho)_\rho) = \bigcup_{I \in \mathbb N^3} \Wit_I((a_\rho)_\rho)$. We say that $(a_\rho)_\rho$ is:
    \begin{enumerate}
        \item of \emph{$\sigma$-transcendental type} if $\Wit((a_\rho)_\rho) = \varnothing$,
        \item of \emph{$\sigma$-algebraic type} otherwise.
    \end{enumerate}
    If $(a_\rho)_\rho$ is of $\sigma$-algebraic type, and $J$ is the smallest such that $\Wit_{J}((a_\rho)_\rho)$ is non-empty, then we write $\Wit_{\mathrm{min}}((a_\rho)_\rho)$ for $\Wit_J((a_\rho)_\rho)$.
\end{definition}
\begin{definition}
    We say that $(a_\rho)_\rho$ is of \emph{$\sigma$-separably $\sigma$-algebraic type} if it is of $\sigma$-algebraic type and there is $f \in \Wit_{\mathrm{min}}((a_\rho)_\rho)$ with $f' \neq 0$.
\end{definition}

\par The following is a result which is well-known, and can be seen by direct computation; for example it is partially shown in \cite[Lemma 6.2]{azgin2010valued} and \cite[Lemma 7.2]{belair2007model}.
\begin{lemma}[\nonsurj]\label{compute-eventual-value}
    Let $(K,\val,\sigma)$ be a valued difference field, with $(\residue{K},\Res{\sigma})$ aperiodic, and let $(a_\rho)_\rho \subseteq K$ be a pseudo-Cauchy sequence without pseudolimit in $K$. Let $I$ be either the minimal complexity of a difference polynomial pseudoconverging to $0$ on some $(b_\rho)_\rho \sim (a_\rho)_\rho$, if there are any, or $\infty > \mathbb{N}^3$ otherwise. Let $f(X) \in \polring{K}$ have complexity strictly smaller than $I$, and $(L,\val,\sigma)$ be some extension of $K$ containing a pseudolimit $a$ of $(a_\rho)_\rho$. Then,
    \begin{enumerate}
        \item $\val(f(a)) \in \valuegroup{K}$, and
        \item if $\val(f(a)) \geq 0$, then $\res(f(a)) \in \residue{K}$.
    \end{enumerate}
\end{lemma}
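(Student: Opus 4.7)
The plan is to reduce to an equivalent pseudo-Cauchy sequence along which $f$ behaves well, and then extract the valuation and residue of $f(a)$ from valuations and residues already lying in $K$.

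First, the constant case is trivial: if $f$ is constant, then $f(a) \in K$ and both (1) and (2) hold. So assume $f$ is non-constant. Applying the quoted result (\cite[Theorem 3.8]{onay2013quantifier}) to the finite set $\Sigma = \{f\}$, using the aperiodicity of $(\residue{K},\Res{\sigma})$, I obtain a pseudo-Cauchy sequence $(b_\rho)_\rho \subseteq K$ with $(b_\rho)_\rho \sim (a_\rho)_\rho$ and $f(b_\rho) \pseudoconverges f(a)$. Next, I would argue $f(a) \neq 0$. Indeed, since $f$ has complexity strictly smaller than $I$, the minimality of $I$ (or vacuity if $I = \infty$) means no sequence equivalent to $(a_\rho)_\rho$ can witness $f \in \Wit_{\mathrm{complexity}(f)}$; if $f(a)$ were $0$, then $f(b_\rho) \pseudoconverges f(a) = 0$ would give precisely such a witness, a contradiction.

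Now with $f(a) \neq 0$, the statement $f(b_\rho) \pseudoconverges f(a)$ says $\val(f(a) - f(b_\rho))$ is eventually strictly increasing. Since $\val(f(a))$ is a fixed element of $\valuegroup{L}$, eventually $\val(f(a) - f(b_\rho)) > \val(f(a))$, and then the ultrametric inequality gives $\val(f(b_\rho)) = \val(f(a))$ eventually. But $\val(f(b_\rho)) \in \valuegroup{K}$, so $\val(f(a)) \in \valuegroup{K}$, proving (1). For (2), once $\val(f(a)) \geq 0$, the same tail gives $\val(f(b_\rho)) \geq 0$ and $\val(f(a) - f(b_\rho)) > 0$, hence $f(b_\rho) \in \O_K$ and $\res(f(a)) = \res(f(b_\rho)) \in \residue{K}$.

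The only subtle step is the non-vanishing of $f(a)$, which is where the hypothesis on $I$ is really used; everything else reduces to the standard ultrametric observation that a non-zero pseudolimit inherits its valuation and residue from the tail of any sufficiently well-behaved pseudo-Cauchy approximant. The aperiodicity hypothesis enters only to invoke the existence of an equivalent sequence along which $f$ pseudoconverges to $f(a)$; once this is in hand, no further work is needed.
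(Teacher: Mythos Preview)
Your approach is the natural one and matches the route the paper implicitly points to, but the step ``since $\val(f(a))$ is a fixed element of $\valuegroup{L}$, eventually $\val(f(a)-f(b_\rho))>\val(f(a))$'' is not justified, and the justification you offer is false in general: a strictly increasing sequence in an ordered group need not exceed any given element. Concretely, one can have a pseudo-Cauchy sequence $(c_\rho)$ with $c_\rho\pseudoconverges c\neq 0$ and yet $\val(c-c_\rho)<\val(c)$ for all $\rho$; in that situation $\val(c_\rho)=\val(c-c_\rho)$ is strictly increasing and never equals $\val(c)$, so the nonzero pseudolimit does \emph{not} inherit its valuation from the tail. Your final paragraph is therefore incorrect: $f(a)\neq 0$ alone does not suffice, and the hypothesis on $I$ must be used a second time.

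The fix is short. From $(b_\rho)_\rho\sim(a_\rho)_\rho$ and $\operatorname{complexity}(f)<I$ you get directly that $f(b_\rho)\not\pseudoconverges 0$, i.e.\ $(\val(f(b_\rho)))_\rho$ is not eventually strictly increasing. Now argue by contradiction: if $\val(f(a)-f(b_\rho))<\val(f(a))$ on a tail, then $\val(f(b_\rho))=\val(f(a)-f(b_\rho))$ is eventually strictly increasing, contradicting the previous sentence. Hence eventually $\val(f(a)-f(b_\rho))\geq\val(f(a))$, and since the left side is strictly increasing, eventually $\val(f(a)-f(b_\rho))>\val(f(a))$. From here your argument for (1) and (2) goes through verbatim. (Note that this also subsumes the separate verification that $f(a)\neq 0$: if $f(a)=0$ then trivially $\val(f(a)-f(b_\rho))<\val(f(a))=\infty$ always.)
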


\par We next show how to deal with adjoining pseudolimits.

\begin{lemma}[\nonsurj]\label{adjoin-limit}
    Suppose $(K,\val,\sigma)$ is a valued difference field, with $(\residue{K},\Res{\sigma})$ aperiodic, and let $(a_\rho)_\rho \subseteq K$ be a pseudo-Cauchy sequence without pseudolimit in $K$. Suppose $(L,\val,\sigma)$ is an extension of $K$ and there is $b \in L$ with $a_\rho \pseudoconverges b$. Then,
    \begin{enumerate}
        \item if $(a_\rho)_\rho$ is of $\sigma$-transcendental type, there is an immediate $\sigma$-transcendental extension ${K \subseteq \extnot{K}{a}}$ with $a_\rho \pseudoconverges a$, and further $a \mapsto b$ gives rise to a unique embedding of $\extnot{K}{a}$ into $L$ over $K$,
        \item if $(a_\rho)_\rho$ is of $\sigma$-algebraic type, and $p \in \Wit_{\mathrm{min}}((a_\rho)_\rho)$, then there is an immediate $\sigma$-algebraic extension $K \subseteq \extnot{K}{a}$ with $a_\rho \pseudoconverges a$ and $p(a)= 0$, and further if $p(b) = 0$, then $a \mapsto b$ gives rise to a unique embedding of $\extnot{K}{a}$ into $L$ over $K$. Moreover, if $(a_\rho)_\rho$ is of $\sigma$-separably $\sigma$-algebraic type and $p \in \Wit_{\mathrm{min}}((a_\rho)_\rho)$ is such that $p' \neq 0$, then\footnote{Since $p'$ has complexity strictly lower than that of $p$.} $p'(a) \neq 0$.
    \end{enumerate}
\end{lemma}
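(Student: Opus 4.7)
The plan is to follow the classical Kaplansky template adapted to difference polynomials, with \cref{compute-eventual-value} as the key computational tool and the fact cited just above (passage to an equivalent pseudo-Cauchy sequence so that $p(a_\rho) \pseudoconverges p(a)$) as the bridge between the sequence and its pseudolimit.

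For part (1), I would first argue that $b$ is $\sigma$-transcendental over $K$. Given any nonzero $f \in \polring{K}$, the transcendental-type hypothesis says $f$ pseudoconverges to $0$ on no sequence equivalent to $(a_\rho)_\rho$; after passing to an equivalent sequence such that $f(a_\rho) \pseudoconverges f(b)$, we find that $\val(f(a_\rho))$ is eventually constant. By \cref{compute-eventual-value}(1), $\val(f(b))$ equals this eventual value and lies in $\valuegroup{K}$, so $f(b) \neq 0$. Taking $a := b$, the extension $\extnot{K}{a}$ is $\sigma$-transcendental, and the $K$-embedding into $L$ determined by $a \mapsto b$ both exists and is unique. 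Immediacy follows by applying \cref{compute-eventual-value} to every $g \in \polring{K}$: $\val(g(a)) \in \valuegroup{K}$ and $\res(g(a)) \in \residue{K}$ whenever defined, which forces $\valuegroup{\extnot{K}{a}} = \valuegroup{K}$ and $\residue{\extnot{K}{a}} = \residue{K}$.

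For part (2), fix $p \in \Wit_{\mathrm{min}}((a_\rho)_\rho)$ and, after replacing $(a_\rho)_\rho$ by an equivalent sequence, assume $p(a_\rho) \pseudoconverges 0$. To construct $\extnot{K}{a}$, I would write $p(X) = P(X, \sigma(X), \dots, \sigma^n(X))$ and build an abstract field containing an orbit $a, \sigma(a), \dots$ satisfying $P(\sigma^j(a), \dots, \sigma^{n+j}(a)) = 0$ for all $j \geq 0$ (adjoining algebraically at each step using $P$). On this extension I would define $\val(g(a))$ as the eventual value of $\val(g(a_\rho))$ whenever $g$ has complexity strictly less than that of $p$; by the minimality in $\Wit_{\mathrm{min}}$ this eventual value exists and lies in $\valuegroup{K}$ via \cref{compute-eventual-value}(1), and well-definedness is forced by the same minimality: two low-complexity representations $g_1(a) = g_2(a)$ with $g_1 \neq g_2$ would give a lower-complexity witness, contradicting $p \in \Wit_{\mathrm{min}}$. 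Immediacy of the extension then follows from \cref{compute-eventual-value}(2).

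For uniqueness of the embedding when $p(b) = 0$, I would set $\phi(a) := b$; the same low-complexity argument shows $g(a) \mapsto g(b)$ is a well-defined $\L_\sigma$-morphism over $K$, and uniqueness is automatic since $a$ generates $\extnot{K}{a}$ as an $\L_\sigma$-structure. The final clause is then short: if $p' \neq 0$ then $p'$ has complexity strictly less than $p$, so by minimality $\val(p'(a_\rho))$ is eventually a finite element of $\valuegroup{K}$, which equals $\val(p'(a))$, whence $p'(a) \neq 0$. I expect the main technical obstacle to be the extension of the valuation from low-complexity expressions $g(a)$ to arbitrary elements of $\extnot{K}{a}$: this requires reducing higher-complexity expressions to lower-complexity ones using $p(a) = 0$ and its $\sigma$-shifts, together with the usual argument that the valuation is determined on the fraction field once it is defined on the relevant ring, and this bookkeeping on the complexity ordering is where care is needed to avoid a circular appeal to minimality.
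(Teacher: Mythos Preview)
Your sketch is correct and follows precisely the Kaplansky-style argument that the paper defers to; the paper's own proof is nothing more than the one-line citation ``The proof of \cite[Lemmata 2.5 and 2.6]{azgin2011elementary} translates verbatim.'' The reduction-to-low-complexity bookkeeping you flag at the end is exactly where that reference does the real work, and your identification of it as the delicate step is accurate.
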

\begin{proof}
    The proof of {\cite[Lemmata 2.5 and 2.6]{azgin2011elementary}} translates verbatim.
\end{proof}
\begin{remark}
    If we had started with $(K,\val,\sigma)$ that is a model of $\primary$, then the resulting immediate extension $\extnot{K}{a}$ with $p(a) = 0$, $p'(a) \neq 0$ would be $\sigma$-separably $\sigma$-algebraic.
\end{remark}
\begin{corollary}
    Let $(K,\val,\sigma)$ be $\sigma$-separably $\sigma$-algebraically maximal and that is a model of $\primary$. Assume that $(\residue{K},\Res{\sigma})$ is aperiodic. Then, all pseudo-Cauchy sequences of $\sigma$-separably $\sigma$-algebraic type have a pseudolimit in $K$.
\end{corollary}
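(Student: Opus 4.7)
The plan is to argue by contrapositive: I will assume there is a pseudo-Cauchy sequence $(a_\rho)_\rho \subseteq K$ of $\sigma$-separably $\sigma$-algebraic type that fails to have a pseudolimit in $K$, and construct from it a proper immediate $\sigma$-separably $\sigma$-algebraic extension of $K$, contradicting the hypothesis that $K$ is $\sigma$-separably $\sigma$-algebraically maximal.

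First, by the definition of $\sigma$-separably $\sigma$-algebraic type, I can pick some $p \in \Wit_{\mathrm{min}}((a_\rho)_\rho)$ with $p' \neq 0$. Next, I need a pseudolimit to exist \emph{somewhere}: I will pass to a valued difference field extension $(L,\val,\sigma)$ of $(K,\val,\sigma)$ in which $(a_\rho)_\rho$ pseudoconverges to some $b \in L$. Such an $L$ exists by general nonsense (e.g.\ any sufficiently saturated elementary extension of $K$ admits a pseudolimit of $(a_\rho)_\rho$). Note that $b \notin K$ by assumption.

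Now I invoke \cref{adjoin-limit}(2) with this data: since $p \in \Wit_{\mathrm{min}}((a_\rho)_\rho)$ and $p' \neq 0$, the lemma produces an immediate $\sigma$-algebraic extension $K \subseteq \extnot{K}{a}$ with $a_\rho \pseudoconverges a$, $p(a) = 0$, and $p'(a) \neq 0$, together with a unique embedding of $\extnot{K}{a}$ into $L$ over $K$ sending $a \mapsto b$. Since $b \notin K$, this embedding forces $a \notin K$, so $K \subsetneq \extnot{K}{a}$ is a \emph{proper} extension. Moreover, by the remark following \cref{adjoin-limit} (applicable because $K \vDash \primary$), the extension $\extnot{K}{a}$ is in fact $\sigma$-separably $\sigma$-algebraic over $K$.

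We have therefore produced a proper immediate $\sigma$-separably $\sigma$-algebraic extension of $K$, directly contradicting $\sigma$-separably $\sigma$-algebraic maximality. I do not anticipate a serious obstacle: all the hard work has already been packaged into \cref{adjoin-limit} (which in turn relies on the aperiodicity hypothesis on $(\residue{K},\Res{\sigma})$ to make sense of the witness set $\Wit_{\mathrm{min}}$ and the pseudoconvergence of difference polynomials along equivalent sequences) and into the remark that, under $\primary$, adjoining a root of $p$ with $p' \neq 0$ yields a $\sigma$-separably $\sigma$-algebraic extension. The only minor subtlety is remembering to first go outside $K$ to obtain a pseudolimit $b$ before running the adjunction lemma, since Lemma~\ref{adjoin-limit}(2) is phrased as a statement about an already-existing extension containing a pseudolimit.
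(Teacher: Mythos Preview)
Your approach is correct and is exactly what the paper intends (the corollary is stated without proof, as an immediate consequence of \cref{adjoin-limit} and the remark following it). One small slip: the embedding $a \mapsto b$ in \cref{adjoin-limit}(2) is only asserted under the additional hypothesis $p(b) = 0$, which your arbitrary pseudolimit $b$ need not satisfy, so you cannot invoke that embedding to conclude $a \notin K$. This is harmless, though: since $a_\rho \pseudoconverges a$ and the sequence has no pseudolimit in $K$ by assumption, $a \notin K$ is immediate, and the detour through $b$ is unnecessary (you only need $L$ and $b$ to meet the formal hypotheses of the lemma).
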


\par The following result might seem underwhelming, but it is of fundamental importance in the final steps of the back-and-forth for \cref{embedding-theorem}. In spirit, it comes from the theory of \textit{dependent defect}, as developed in \cite{kuhlmann2023valuation}.

\begin{remark}
    If $K$ is deeply $\sigma$-ramified, i.e. $K \subseteq \inv{K}$ is dense, then equivalently $\inv{K} \subseteq \completion{K}$, where by $\completion{K}$ we denote the completion of $K$ as a valued field. In particular, if $a \in \inv{K}$, then there is a Cauchy sequence from $K$ converging to $a$, and thus $\val(a-K) = \{\val(a-b) \mid b \in K\} \subseteq \valuegroup{K}$ is cofinal.
\end{remark}

\begin{lemma}
    \label{dependent-defect}
    Suppose $(K,\val,\sigma)$ is $\sigma$-separably $\sigma$-algebraically maximal, is a model of $\primary$, and $(\residue{K},\Res{\sigma})$ is linearly difference closed. Let $K \subseteq \extnot{K}{t}$ be an immediate $\sigma$-transcendental extension, and let $(a_\rho)_\rho \subseteq K$ be a pseudo-Cauchy sequence with $t$ as pseudolimit. Then $(a_\rho)_\rho$ is of $\sigma$-transcendental type.
\end{lemma}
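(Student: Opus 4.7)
I proceed by contradiction: assume $(a_\rho)_\rho$ is of $\sigma$-algebraic type, and let $p(X) \in \Wit_{\mathrm{min}}((a_\rho)_\rho)$ be of minimal complexity. By \cref{adjoin-limit}(2) there is an immediate $\sigma$-algebraic extension $K \subseteq \extnot{K}{a}$ with $a_\rho \pseudoconverges a$, $p(a) = 0$, and the uniqueness clause: any extension $L \supseteq K$ containing some $b$ with $a_\rho \pseudoconverges b$ and $p(b) = 0$ admits a unique $K$-embedding $\extnot{K}{a} \hookrightarrow L$ sending $a$ to $b$. The whole strategy reduces to proving $p(t) = 0$: applying this uniqueness to $L = \extnot{K}{t}$ and $b = t$ would then produce a $K$-embedding making $t$, as the image of the $\sigma$-algebraic element $a$, itself $\sigma$-algebraic over $K$, contradicting the $\sigma$-transcendence of $t$.

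To establish $p(t) = 0$, I exploit that $(\residue{K}, \Res{\sigma})$ is aperiodic (being linearly difference closed). The aperiodicity-replacement fact preceding \cref{compute-eventual-value}, applied to $(a_\rho)_\rho$ with pseudolimit $t$ and $\Sigma = \{p\} \cup \{p_J : J \neq 0,\; p_J \neq 0\}$, yields an equivalent pseudo-Cauchy sequence $(b_\rho)_\rho$ with $p(b_\rho) \pseudoconverges p(t)$ along which each $p_J(b_\rho)$ is pseudo-Cauchy. Since $p$ has minimal complexity in $\Wit((a_\rho)_\rho) = \Wit((b_\rho)_\rho)$, no proper derivative $p_J$ lies in $\Wit((b_\rho)_\rho)$, so $\val(p_J(b_\rho))$ must be eventually constant (a pseudo-Cauchy sequence not pseudoconverging to $0$ has eventually constant valuation). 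The Taylor expansion
\[
    p(t) - p(b_\rho) \;=\; \sum_{J \neq 0} p_J(b_\rho)\,(t - b_\rho)^J
\]
then has each summand of valuation $\val(p_J(b_\rho)) + J(\gamma_\rho)$, where $\gamma_\rho$ are the radii of $(b_\rho)_\rho$. Provided these radii are cofinal in $\valuegroup{K}$, the right-hand side has valuations tending to infinity, and $(p(b_\rho))_\rho$ is a genuine Cauchy sequence pseudoconverging to $p(t)$. A symmetric Taylor estimate, applied to any equivalent $(c_\rho)_\rho$ witnessing $p \in \Wit$ via $p(c_\rho) \pseudoconverges 0$, shows $p(b_\rho) \pseudoconverges 0$ as well; uniqueness of pseudolimits for Cauchy sequences then forces $p(t) = 0$.

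The hard part will be arranging cofinality of the radii $(\gamma_\rho)_\rho$ in $\valuegroup{K}$. Under the (implicit) assumption that $(a_\rho)_\rho$ has no pseudolimit in $K$, I plan to use the deep $\sigma$-ramification of $K$ (which holds by \cref{max-deep-ram}, as $K$ is itself $\sigma$-separably $\sigma$-algebraically maximal): immediateness of $K \subseteq \extnot{K}{t}$ lets me iteratively sharpen the approximations $a_\rho$ of $t$ --- by dividing the error $t - a_\rho$ by an element of $K$ of matching valuation and lifting the resulting residue from $\residue{\extnot{K}{t}} = \residue{K}$ --- to produce a refined pseudo-Cauchy sequence still pseudoconverging to $t$, still of $\sigma$-algebraic type with the same minimal witness $p$, and with cofinal radii. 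Making this refinement rigorous, and verifying that it preserves $\Wit_{\mathrm{min}}((a_\rho)_\rho)$, is the main technical hurdle.
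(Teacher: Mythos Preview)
Your strategy hinges entirely on arranging that the radii $(\gamma_\rho)_\rho$ are cofinal in $\valuegroup{K}$, and this is where the argument breaks. The refinement you sketch --- divide the error $t-a_\rho$ by an element of matching value, lift the residue, repeat --- is nothing more than the standard proof that $\val(t-K)$ has no maximum in an immediate extension; it does \emph{not} show $\val(t-K)$ is cofinal, and in general it is not. Deep $\sigma$-ramification (\cref{max-deep-ram}) says $K$ is dense in $\inv{K}$, but $t$ is $\sigma$-transcendental and hence $t\notin\inv{K}$, so that result gives you no purchase on approximating $t$. Your plan to ``preserve $\Wit_{\mathrm{min}}$'' under refinement is therefore aiming at a target that need not exist.

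The paper's proof supplies exactly the missing idea: one applies deep $\sigma$-ramification to the $\sigma$-\emph{algebraic} pseudolimit $a$, not to $t$. Since $K$ is $\sigma$-separably $\sigma$-algebraically maximal, for $n$ large the subextension $K\subseteq\extnot{K}{\sigma^n(a)}$ is $\sigma$-separably $\sigma$-algebraic and immediate, hence trivial; thus $\sigma^n(a)\in K$ and $a\in\inv{K}$. Now \cref{max-deep-ram} legitimately gives that $\val(a-K)$ is cofinal in $\valuegroup{K}$. On the other hand, since $(a_\rho)_\rho$ has no pseudolimit in $K$ and both $a$ and $t$ are pseudolimits, one has $\val(a-c)<\val(a-t)$ for every $c\in K$, bounding $\val(a-K)$ by the single finite value $\val(a-t)$. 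This is the contradiction. Note that this bypasses your $p(t)=0$ computation entirely: once $a\in\inv{K}$ is established, no Taylor expansion or manipulation of $\Wit_{\mathrm{min}}$ is needed.
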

\begin{proof}
    Suppose not, and let $p(X) \in \Wit_{\mathrm{min}}((a_\rho)_\rho)$. By \cref{adjoin-limit}, we can find a proper immediate $\sigma$-algebraic extension $K \subseteq \extnot{K}{a}$, with $a_\rho \pseudoconverges a$. For $n$ big enough, we can split the extension into the tower $K \subseteq \extnot{K}{\sigma^n(a)} \subseteq \extnot{K}{a}$, where the bottom part is $\sigma$-separably $\sigma$-algebraic, and hence trivial. We are then left with a purely $\sigma$-inseparable extension $K \subseteq \extnot{K}{a}$. Now, since $K$ is deeply $\sigma$-ramified (\cref{max-deep-ram}), $\val(a-K)$ is cofinal in $\valuegroup{K}$. On the other hand, since $K \subseteq \extnot{K}{t,a} \subseteq \inv{\extnot{K}{t}}$ is a tower of immediate extensions, we can compute $\val(a-t) \in \valuegroup{K}$. Since both are pseudolimits, $\val(a-t) \geq \val(a-K)$; as they are not isomorphic over $K$, because $a$ is $\sigma$-algebraic and $t$ is $\sigma$-transcendental, then $\val(a-t) < \infty$. In particular, $\val(a-K) \leq \val(a-t)$ is not cofinal in $\valuegroup{K}$, a contradiction.
\end{proof}
\section{Setting up the embedding lemma}\label{auxiliary}

\par We now have almost all the tools to establish the embedding lemma and deduce relative quantifier elimination. We introduce the leading terms structure, the languages and theories, and take a brief detour to prove that one can embed $\smax{K}$ in saturated models. We then establish recipes for the auxiliary steps, namely increasing the residue difference field and value difference group.

\runin{The leading terms structure}
Given a valued field $(K,\val)$, we start to define the leading terms structure of $(K,\val)$ by considering the set $\RV_K \coloneqq (K^\times/(1+\m_K)) \cup \{0\}$. We write $\rv$ for the quotient map $K^\times \to \RV_K$, extended to $K$ via $\rv(0) \coloneqq 0$.

We have, for $a, b \in K^\times$,
\[
    \rv(a) = \rv(b) \iff \val(a-b) > \val(a) \iff \val(a-b) > \val(b).
\]

In particular, $\rv(a) = \rv(b)$ implies that $\val(a) = \val(b)$, and so $\val\colon K^\times \to \valuegroup{K}$ induces a map $\val_{\rv}\colon \RV_K^\times \to \valuegroup{K}$. On the other hand, it gives rise to a short exact sequence of groups
\[
    1 \longrightarrow \residue{K}^\times \stackrel{\iota}{\longhookrightarrow} \RV_K^\times \stackrel{\val_{\rv}}{\longtwoheadrightarrow} \valuegroup{K} \longrightarrow 0.
\]

We endow $\RV_K^\times$ with the multiplicative structure inherited from $K^\times$, extended to $\RV_K$ by $a \cdot 0 = 0$ for all $a \in \RV_K$. Further, we define the ternary relation
\[
    \oplus(\alpha,\beta,\gamma) \iff \exists a,b \in K^\times (\rv(a) = \alpha \land \rv(b) = \beta \land \rv(a+b) = \gamma).
\]
We call $(\RV_K,\oplus,\cdot,0,1)$ the \emph{leading terms structure} of $(K,\val)$.
We also write $\alpha \oplus \beta \coloneqq \{\gamma \in \RV_K \mid \oplus(\alpha,\beta,\gamma)\}$. We say that $\alpha \oplus \beta$ is \emph{well-defined} if $\alpha \oplus \beta = \{\gamma\}$ for some $\gamma$, in which case we write also $\alpha \oplus \beta = \gamma$.

\par For $\alpha_1, \dots \alpha_n \in \RV_K$, we write 
\[
    \oplus(\alpha_1, \dots \alpha_n, \beta) \iff \exists a_1, \dots a_n \left(\bigwedge_{i=1}^n \rv(a_i) = \alpha_i \land \rv(a_1+\dots + a_n) = \beta\right).
\]
We again say that $\alpha_1 \oplus \cdots \oplus \alpha_n$ is \emph{well-defined} if there is exactly one $\beta$ such that $\oplus(\alpha_1, \dots \alpha_n, \beta)$.
\begin{lemma}
    Let $\alpha_1, \dots \alpha_n \in \RV_K^\times$. Choose some representatives $a_1, \dots a_n \in K$ so that $\alpha_i = \rv(a_i)$, for $i=1, \dots n$. Then $\alpha_1 \oplus \cdots \oplus \alpha_n$ is well-defined if and only if $\val(a_1 + \dots + a_n) = \min_{i=1, \dots n} \val(a_i)$.
\end{lemma}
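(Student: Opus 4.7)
The plan is to prove both implications by unwinding the definition of $\oplus$ using the fundamental equivalence $\rv(a) = \rv(b) \iff \val(a-b) > \val(a)$ (whenever $a \in K^\times$). Let me set $s \coloneqq a_1 + \dots + a_n$ and $\gamma \coloneqq \min_i \val(a_i)$.

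For the ``if'' direction, I would suppose $\val(s) = \gamma$ and show that $\rv(s)$ is the unique value of the sum, regardless of the choice of representatives. Let $a_1', \dots, a_n' \in K$ be any other representatives with $\rv(a_i') = \alpha_i$. Then for each $i$ we have $\val(a_i' - a_i) > \val(a_i) \geq \gamma$, hence
\[
    \val\Bigl(\sum_{i=1}^n a_i' - s\Bigr) = \val\Bigl(\sum_{i=1}^n (a_i' - a_i)\Bigr) > \gamma = \val(s),
\]
so $\rv\bigl(\sum a_i'\bigr) = \rv(s)$. This shows $\alpha_1 \oplus \cdots \oplus \alpha_n = \{\rv(s)\}$.

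For the ``only if'' direction I would argue by contrapositive: assume $\val(s) > \gamma$ and exhibit a second choice of representatives whose sum has a different $\rv$. Without loss of generality $\val(a_1) = \gamma$. Pick $\epsilon \in K$ with $\val(\epsilon) > \gamma$ chosen so that $\rv(s+\epsilon) \neq \rv(s)$: concretely, if $s \neq 0$ take $\epsilon = -s$ (legitimate since $\val(-s) = \val(s) > \gamma$), while if $s = 0$ pick any nonzero $\epsilon$ with $\val(\epsilon) > \gamma$. Then $a_1' \coloneqq a_1 + \epsilon$ satisfies $\val(a_1' - a_1) = \val(\epsilon) > \val(a_1)$, so $\rv(a_1') = \alpha_1$; and taking $a_i' \coloneqq a_i$ for $i > 1$, the new sum $s' = s + \epsilon$ equals $0$ or $\epsilon$, in either case with $\rv(s') \neq \rv(s)$. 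Hence $\{\rv(s), \rv(s')\} \subseteq \alpha_1 \oplus \cdots \oplus \alpha_n$, so the sum is not well-defined.

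There is no real obstacle here: the only subtle point is remembering to split off the degenerate case $s = 0$ in the ``only if'' direction, since then $\val(s) = \infty$ and one cannot just take $\epsilon = -s$; but picking any nonzero $\epsilon$ of sufficiently large value handles this. Everything else is immediate from the ultrametric inequality and the definition of $\rv$.
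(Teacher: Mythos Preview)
Your proof is correct and is the natural argument; the paper states this lemma without proof, so there is nothing to compare against. One minor caveat: in the degenerate case $s=0$ you need a nonzero $\epsilon$ with $\val(\epsilon)>\gamma$, which tacitly assumes the valuation is nontrivial---but this is harmless in the paper's setting.
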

Given $\sigma \in \operatorname{End}(K,\val)$, then there is an induced $\Rv{\sigma} \in \operatorname{End}(\RV_K)$. 

\begin{remark}
    By applying the Short Five Lemma (\cite[Exercise 1.3.3]{weibel1994introduction}) to
    \[\begin{tikzcd}
        1 & {\residue{K}^\times} & {\RV_K^\times} & {\valuegroup{K}} & 0 \\
        1 & {\residue{K}^\times} & {\RV_K^\times} & {\valuegroup{K}} & 0
        \arrow[from=1-1, to=1-2]
        \arrow[from=1-2, to=1-3]
        \arrow[from=1-3, to=1-4]
        \arrow[from=1-4, to=1-5]
        \arrow[from=2-1, to=2-2]
        \arrow[from=2-2, to=2-3]
        \arrow[from=2-3, to=2-4]
        \arrow[from=2-4, to=2-5]
        \arrow["\Res{\sigma}", from=1-2, to=2-2]
        \arrow["\Rv{\sigma}", from=1-3, to=2-3]
        \arrow["\Val{\sigma}", from=1-4, to=2-4]
    \end{tikzcd}\]
    we get that $\Rv{\sigma}$ is surjective if and only if both $\Val{\sigma}$ and $\Res{\sigma}$ are.
\end{remark}

We call $(\RV_K,\oplus,\cdot,0,1,\Rv{\sigma})$ the \emph{leading term (difference) structure} of $(K,\val,\sigma)$.

\runin{Languages and theories} We now introduce the language that we will use to prove relative quantifier elimination.
\begin{definition}\label{def:three-sorts-with-lambda}
    We let $\Lvfe$ be the three-sorted language whose sorts are given as follows:
    \begin{enumerate}
        \item $\mainsort$ is the main sort, with language $\LK = \{+, \cdot, -, 0, 1, \sigma\}$,
        \item $\RVsort$ has language $\L_{\RVsort} \coloneqq \{\oplus, \cdot, 0, 1, \Rv{\sigma}\}$,
        \item $\valuesort$ has language $\Loagsigma \coloneqq \{+,\leq,0,\infty,\Val{\sigma}\}$,
    \end{enumerate}
    with functions given by $\rv \colon \mainsort \to \RVsort$ and $\val_{\rv} \colon \RVsort \to \valuesort$. We let $\Lsorts$ be the reduct of $\Lvfe$ to the sorts $\RVsort$ and $\valuesort$.
\end{definition}
\begin{remark}\label{interpret-value}
    Note that one can define $\val$ in $\Lvfe$ as $\val = \val_{\rv} \circ \rv$. Moreover, in any valued difference field $(K,\val,\sigma)$, $(\Gamma_K,+,0,\leq,\infty,\Val{\sigma})$ and $(\residue{K},+,\cdot,-,0,1,\Res{\sigma})$ are interpretable in $(\RV_K,\oplus,\cdot,0,1,\Rv{\sigma})$.
\end{remark}
\begin{definition}\label{lksep}
    We let $\LKsep$ be the expansion of $\LK$ where we adjoin, for every $n \geq 1$, $i \in \{1, \dots n\}$, an $(n+1)$-ary function $\lambda_n^i$. We let $\Lsep$ be the expansion of $\Lvfe$ where we give $\mainsort$ the language $\LKsep$.
\end{definition}
\begin{definition}\label{lambda-sigma}
    We let $\Tsep$ be the $\LKsep$-theory of difference fields extending $\primary$ where we interpret $\lambda_n^i$ as follows. Let $x_1, \dots x_n, y \in K$, and assume that $x_1, \dots x_n$ are $\sigma(K)$-linearly independent, with $y \in \operatorname{span}_{\sigma(K)}(x_1, \dots x_n)$. Then, $\lambda_n^1(\overline{x},y) \dots \lambda_n^n(\overline{x},y)$ are the unique elements of $K$ such that 
    \[
        y = \sum_{i=1}^n \sigma(\lambda_n^i(x_1, \dots x_n, y)) x_i.
    \]
    Otherwise, set $\lambda_n^1(x_1, \dots x_n, y) = \cdots = \lambda_n^n(x_1, \dots x_n, y) = 0$.
\end{definition}
\begin{remark}\label{language-separability}
    If $(L,\sigma) \vDash \Tsep$ and $(K,\sigma) \subseteq (L,\sigma)$ is a difference subfield, then in $\LKsep$ we have that ${(K,\sigma) \leq (L,\sigma)}$ if and only if the extension is $\sigma$-separable. Indeed, $(K,\sigma)$ is closed under the $\lambda$-functions precisely if and only if $K$ is linearly disjoint from $\sigma(L)$ over $\sigma(K)$.
\end{remark}
\begin{definition}
    We let $\Tvdf$ be the $\Lsep$-theory whose models $\model{K} = \langle K,\RV_K, \valuegroup{K} \rangle$ are non-inversive valued difference fields with leading terms difference structure $\RV_K$, value difference group $\valuegroup{K}$, $\Rv{\sigma}$, and further $(K,\sigma)$ is a model of $\Tsep$. We let $\Tvdfzero$ extend $\Tvdf$ by further requiring that models have residue characteristic zero.
\end{definition}
\begin{definition}
    We let $\WTH$ be the $\Lsep$-theory extending $\Tvdf$ whose models $(K,\val,\sigma)$ are non-inversive weakly $\sigma$-henselian and such that $\sigma(K)$ is relatively algebraically closed in $K$ (i.e., they are also models of $\primary$). We let $\WTHzero$ extend $\WTH$ by further requiring that models have residue characteristic zero.
\end{definition}

\begin{remark}
    Note that an inversive model of $\WTHzero$ is precisely an equicharacteristic zero strongly $\sigma$-henselian valued difference field, as considered by \cite{onay2013quantifier}. We also point out that we don't need to impose, when defining $\WTHzero$, that $\Res{\sigma}$ and $\Val{\sigma}$ are surjective, as this follows from weak $\sigma$-henselianity by \cref{hens-implies-inv}.
\end{remark}

\par We now prove a somewhat surprising lemma about the behaviour of $\lambda$-functions on subrings of our models; this mimics the analogous phenomenon in separably closed valued fields.

\begin{lemma}\label{lambda-subfield}
    Let $\model{K} \vDash \WTH$ and $\model{A} \leq \model{K}$. Then, there is a unique $\Lsep$-structure on $\operatorname{Frac}(A)$ extending the one on $A$. In other words, given $\model{A} \leq \model{K}$, we may always assume that $A$ is a field.
\end{lemma}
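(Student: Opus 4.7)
The plan is to show that the only non-trivial content on the $\mainsort$-part is closure of $F \coloneqq \operatorname{Frac}(A)$ under the $\lambda$-functions of $K$: the field operations and $\sigma$ restrict to $F$ for free, and the $\RVsort$- and $\valuesort$-parts of the extended structure are forced by the substructure axioms. Closure of $F$ under $\lambda$ will, via \cref{language-separability}, reduce to showing that the field extension $F \subseteq K$ is $\sigma$-separable.

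The first step is to establish a ring-level analogue of \cref{language-separability}: since $A$ is closed under the $\lambda$-functions of $K$, the subring $A$ is linearly disjoint from $\sigma(K)$ over $\sigma(A)$, in the sense that every $\sigma(A)$-linearly independent tuple from $A$ remains $\sigma(K)$-linearly independent. I would prove this by induction on the tuple length $n$: given $a_1, \dots, a_n \in A$ that are $\sigma(A)$-linearly independent, if $\sum_i \sigma(k_i) a_i = 0$ for some $k_i \in K$ with $k_n \neq 0$, then $a_1, \dots, a_{n-1}$ is $\sigma(A)$-linearly independent as a subtuple and hence $\sigma(K)$-linearly independent by induction, so $a_n \in \operatorname{span}_{\sigma(K)}(a_1, \dots, a_{n-1})$. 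The values $\mu_i \coloneqq \lambda_{n-1}^i(a_1, \dots, a_{n-1}, a_n)$ then lie in $A$ by hypothesis and satisfy $a_n = \sum_{i<n} \sigma(\mu_i) a_i$, producing a non-trivial $\sigma(A)$-linear dependence among the $a_i$, a contradiction.

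The second step is to transfer this to $F$. Given $f_1, \dots, f_n \in F$ that are $\sigma(F)$-linearly independent, I would clear a common denominator $s \in A \setminus \{0\}$ so that $a_i \coloneqq s f_i \in A$; any $\sigma(A)$-linear relation among the $a_i$ would, after dividing by $s$, give a $\sigma(F)$-linear relation among the $f_i$, so the $a_i$ are $\sigma(A)$-linearly independent. By the first step they are $\sigma(K)$-linearly independent, and dividing by $s$ once more the same holds for the $f_i$. Thus $F \subseteq K$ is $\sigma$-separable as a field extension, and \cref{language-separability} gives closure of $F$ under the $\lambda$-functions of $K$.

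Finally, the $\RVsort$- and $\valuesort$-parts of the $\Lsep$-structure on $F$ are forced to contain $\rv(F)$ and $\val(F)$ respectively, together with the corresponding parts of $\model{A}$, and to be closed under the operations of $\L_{\RVsort}$ and $\Loagsigma$ as well as compatible with $\rv$ and $\val_{\rv}$. Taking the minimal such subsets of $\RV_K$ and $\valuegroup{K}$ gives existence, and these choices are uniquely determined, yielding the required unique extension. The main obstacle is the ring-level linear disjointness argument in the second paragraph; the rest is a routine passage from a subring to its field of fractions.
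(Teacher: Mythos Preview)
Your argument is correct, but it follows a genuinely different route from the paper's. The paper proceeds by a dichotomy on whether $A \subseteq \core{K}$. If so, then $\operatorname{Frac}(A) \subseteq \core{K}$ and the $\lambda$-functions essentially reduce to $\sigma^{-1}$, which preserves $\operatorname{Frac}(A)$. If not, then (after observing via $\lambda_1^1(1,-)$ that $A \subseteq \sigma(K)$ would force $A \subseteq \core{K}$) one can pick $a \in A \setminus \sigma(K)$, compute $\lambda_1^1(\sigma(a),1) = 1/a \in A$, and for any nonzero $b \in \sigma(K) \cap A$ use $ab \notin \sigma(K)$ to get $1/(ab) \in A$ and hence $1/b \in A$. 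Thus in the interesting case $A$ is \emph{already} a field, a strictly sharper conclusion than yours.

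Your approach, by contrast, never splits into cases: you show directly that closure of $A$ under the $\lambda$-functions forces ring-level linear disjointness of $A$ from $\sigma(K)$ over $\sigma(A)$ (using \emph{all} the $\lambda_{n}^i$, not just $\lambda_1^1$), then pass to fractions and invoke \cref{language-separability}. This is more structural and makes explicit that $\lambda$-closure is the same as linear disjointness already at the level of subrings, which is a pleasant observation in its own right. The paper's argument is shorter and extracts more (that $A$ is a field unless $A \subseteq \core{K}$), while yours is more uniform and avoids the case distinction and the somewhat hidden step reducing $A \not\subseteq \core{K}$ to $A \not\subseteq \sigma(K)$.
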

\begin{proof}
    As $\rv$ is multiplicative, it extends uniquely to $\operatorname{Frac}(A)$. The $\lambda$-functions also extend uniquely to $\operatorname{Frac}(A)$: indeed, if $A \subseteq \core{K}$, then $\operatorname{Frac}(A) \subseteq \core{K}$, and the $\lambda$-functions are trivial. Otherwise, assume that $A \not\subseteq \core{K}$: we show that $A = \operatorname{Frac}(A)$. Indeed, if $A \not\subseteq \core{K}$ then by induction we may assume that there is ${a \in A \setminus \sigma(K)}$, and so we can compute $\lambda_1^1(\sigma(a),1) = \frac{1}{a} \in A$. 
    For any $b \in \sigma(K) \cap A$, we have $ab \notin \sigma(K)$, and thus by our previous considerations $\frac{1}{ab} \in A$, so $\frac{1}{b} \in A$.
\end{proof}

\runin{Building models of $\WTHzero$} We exhibit a recipe for building non-inversive models with prescribed residue difference field and value difference group. The standard Hahn constructions don't work, for reasons clarified in \cref{hahndoesntwork}.

\begin{definition}
    Let $(K,\sigma)$ be a difference field. We say that it is \emph{weakly linearly difference closed} if for every $b, a_0, \cdots a_n \in K$ with $a_0 \neq 0$, there is $x \in K$ with
    \[
        b + a_0 x + a_1 \sigma(x) + \cdots + a_n \sigma^n(x) = 0.
    \]
\end{definition}

\begin{remark}
    If $(K,\sigma)$ is weakly linearly difference closed, then $\inv{K}$ is linearly difference closed.
\end{remark}

\par Let $(k,\Res{\sigma})$ be a weakly linearly difference closed, non-inversive difference field of characteristic zero, and $(\Gamma,\Val{\sigma})$ a non-inversive ordered difference group.

Consider the generalized power series field $K \coloneqq k(\!(\Gamma)\!)$ with the $t$-adic valuation $\val$ and the lift of $\Res{\sigma}$ and $\Val{\sigma}$ given by
    \[
        \sigma\left(\sum_{\gamma \in \Gamma} a_\gamma t^\gamma\right) = \sum_{\gamma \in \Gamma} \Res{\sigma}(a_\gamma) t^{\Val{\sigma}(\gamma)}.
    \]
    Let $L \coloneqq \inv{k}(\!(\inv{\Gamma})\!)$. There is a natural embedding $K \subseteq L$ (but $L$ is not the inversive closure of $K$), and $L$ satisfies $\primary$ since it is inversive. Then,

\begin{proposition}\label{models}
    The relative $\sigma$-separably $\sigma$-algebraic closure $\widetilde{\FE{K}}$ of $\FE{K}$ in $L$ is a model of $\WTHzero$. It has residue difference field $(\inv{k},\Res{\sigma})$ and value difference group $(\inv{\Gamma},\Val{\sigma})$.
\end{proposition}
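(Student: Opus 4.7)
The plan has four steps. First, I verify that the ambient Hahn series field $L = \inv{k}((\inv{\Gamma}))$ is a strongly $\sigma$-henselian model of $\primary$. As a Hahn series field, $L$ is maximally valued, hence admits no proper immediate extension whatsoever, and in particular is $\sigma$-algebraically maximal. Its residue field is $\inv{k}$, which by the preceding remark is linearly difference closed since $k$ is weakly linearly difference closed. Therefore \cref{maximality-hens} gives that $L$ is strongly $\sigma$-henselian; being inversive, $L$ is automatically a model of $\primary$.

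Second, I apply \cref{sep-alg-closure} to the subextension $\FE{K} \subseteq L$: this yields $\widetilde{\FE{K}}$ as a $\sigma$-separably $\sigma$-algebraic extension of $\FE{K}$, itself a model of $\primary$, which is moreover relatively $\sigma$-separably $\sigma$-algebraically closed in $L$. Residue characteristic zero is inherited from $k$. Weak $\sigma$-henselianity of $\widetilde{\FE{K}}$ then follows by mimicking the argument of \cref{inv-strongly-hens}: given $p \in \polring{\widetilde{\FE{K}}}$ in $\sigma$-henselian configuration at some $a \in \widetilde{\FE{K}}$ with $p' \neq 0$, strong $\sigma$-henselianity of $L$ produces $b \in L$ with $p(b) = 0$ and $\val(b - a) = \gamma(p,a)$, and the relative $\sigma$-separable $\sigma$-algebraic closedness of $\widetilde{\FE{K}}$ in $L$ forces $b \in \widetilde{\FE{K}}$.

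Third, the residue-field and value-group identifications are a simple sandwich. From $K \subseteq \widetilde{\FE{K}} \subseteq L$ one has $k \subseteq \residue{\widetilde{\FE{K}}} \subseteq \inv{k}$ and $\Gamma \subseteq \valuegroup{\widetilde{\FE{K}}} \subseteq \inv{\Gamma}$. By weak $\sigma$-henselianity combined with \cref{hens-implies-inv}, $\Res{\sigma}$ and $\Val{\sigma}$ are surjective on the intermediate structures; since these already contain $k$ and $\Gamma$, surjectivity forces them to contain $\inv{k}$ and $\inv{\Gamma}$ respectively, so equality holds.

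The main obstacle is showing that $\widetilde{\FE{K}}$ is non-inversive, as required for membership in $\WTHzero$. The key reduction is via $\sigma$-separability: since $\FE{K} \subseteq \widetilde{\FE{K}}$ is $\sigma$-separable, $\FE{K}$ is linearly disjoint from $\sigma(\widetilde{\FE{K}})$ over $\sigma(\FE{K})$, so
\[
    \FE{K} \cap \sigma(\widetilde{\FE{K}}) \;=\; \sigma(\FE{K}).
\]
This reduces non-inversiveness of $\widetilde{\FE{K}}$ to non-inversiveness of $\FE{K} = K^{\mathrm{alg}} \cap \inv{K}$. For the latter, I would exploit the non-inversive hypothesis on $k$ (respectively $\Gamma$) to exhibit $a \in K$ whose unique $\sigma$-preimage in $L$ lies outside $K^{\mathrm{alg}}$, so that $a \in K \subseteq \FE{K}$ while $a \notin \sigma(\FE{K})$, witnessing that $\FE{K}$ is not inversive.
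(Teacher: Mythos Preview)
Your argument follows the paper's almost exactly: the first three steps—strong $\sigma$-henselianity of $L$ via \cref{maximality-hens}, weak $\sigma$-henselianity of $\widetilde{\FE{K}}$ from its relative $\sigma$-separable $\sigma$-algebraic closedness in $L$, and the sandwich for residue field and value group via \cref{hens-implies-inv}—are precisely the paper's reasoning, only spelled out in more detail. The one divergence is in the non-inversiveness step: the paper argues directly that $\widetilde{\FE{K}}$ is a $\sigma$-separable extension of $K = k(\!(\Gamma)\!)$, which is visibly non-inversive since $k$ and $\Gamma$ are, whereas you route through $\FE{K}$ and then must still show $\FE{K}$ is non-inversive—a step you leave as a sketch, and one that is not as immediate as suggested (nothing in the hypotheses prevents every element of $k \setminus \Res{\sigma}(k)$ from being algebraic over $\Res{\sigma}(k)$, for instance).
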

\begin{proof}
    Since $k$ is weakly linearly difference closed, $\inv{k}$ is linearly difference closed, and so (by \cref{maximality-hens}) $L$ is strongly $\sigma$-henselian. Then, by construction, $\widetilde{\FE{K}}$ is weakly $\sigma$-henselian. In particular, it has inversive residue difference field and inversive value difference group by \cref{hens-implies-inv}, thus they are equal to $\inv{k}$ and $\inv{\Gamma}$. Since it is a $\sigma$-separable extension of $k(\!(\Gamma)\!)$, which is not inversive, it is still not inversive.
\end{proof}

\begin{remark}\label{hahndoesntwork}
    As $(k(\!(\Gamma)\!),\val)$, as defined above, is maximal as a valued field (i.e. it has no proper immediate extension), then whenever $\sigma \in \operatorname{End}(k(\!(\Gamma)\!),\val)$, we have that $\sigma$ is surjective if and only if both $\Res{\sigma}$ and $\Val{\sigma}$ are. Indeed, if both $\Res{\sigma}$ and $\Val{\sigma}$ are surjective, $\sigma(k(\!(\Gamma)\!)) \subseteq k(\!(\Gamma)\!)$ is an immediate extension, and thus it must be trivial. In particular, this means that there is no non-inversive model of $\WTH$ of the form $(k(\!(\Gamma)\!),\val,\sigma)$.
\end{remark}

\runin{Embeddings in models} We explain how $\sigma$-separably $\sigma$-algebraically maximal $\sigma$-separably $\sigma$-algebraic immediate extensions can be embedded in saturated models.

\begin{fact}[{\cite[Corollary 5.10]{onay2013quantifier}}]
    \label{lem:embedding-inv}
    Let $(K,\val,\sigma)$ be an inversive valued difference field with linearly difference closed residue difference field. Let $(K',\val,\sigma)$ be a $\vert \valuegroup{K}\vert^+$-saturated strongly $\sigma$-henselian extension of $K$. Then, any $\sigma$-algebraically maximal immediate $\sigma$-algebraic extension of $K$ embeds into $K'$ over $K$.
\end{fact}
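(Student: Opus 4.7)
The strategy is a Zorn--maximal partial embedding argument. Let $L$ denote the given $\sigma$-algebraically maximal immediate $\sigma$-algebraic extension of $K$. By Zorn's lemma, choose a maximal $K$-embedding $\phi \colon M \hookrightarrow K'$ with $K \subseteq M \subseteq L$; it suffices to show that $M = L$.

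Suppose for contradiction that there is $a \in L \setminus M$. The extension $M \subseteq \extnot{M}{a}$ is $\sigma$-algebraic (as a subextension of $L/K$) and immediate, hence $a$ is the pseudolimit of some pseudo-Cauchy sequence $(a_\rho)_{\rho < \lambda} \subseteq M$ with no pseudolimit in $M$. Since $p(a) = 0$ for some non-zero $p(X) \in \polring{M}$, the sequence is of $\sigma$-algebraic type; after passing to an equivalent sequence we may assume $p \in \Wit_{\mathrm{min}}((a_\rho)_\rho)$ with $p(a) = 0$. Because $L/K$ is immediate, $\vert\valuegroup{M}\vert = \vert\valuegroup{K}\vert$, and because the radii of a pseudo-Cauchy sequence strictly increase, $\lambda \leq \vert\valuegroup{K}\vert$.

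Now transfer across $\phi$. The partial type asserting ``$y$ is a pseudolimit of $(\phi(a_\rho))_\rho$'' uses only $\lambda \leq \vert\valuegroup{K}\vert$ parameters, so by $\vert\valuegroup{K}\vert^+$-saturation of $K'$ it is realized. To upgrade such a realization into an actual root of $\phi(p)$, exploit that $p$ has minimal complexity in $\Wit((a_\rho)_\rho)$: as in the standard analysis of \cite{onay2013quantifier}, this forces $\phi(p)$ to be in $\sigma$-henselian configuration at $\phi(a_\rho)$ for all sufficiently large $\rho$. Strong $\sigma$-henselianity of $K'$ then yields $b \in K'$ with $\phi(p)(b) = 0$ and $\phi(a_\rho) \pseudoconverges b$. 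The inversive analogue of \cref{adjoin-limit} shows that $a \mapsto b$ extends $\phi$ to an embedding of $\extnot{M}{a}$ into $K'$, contradicting maximality of $\phi$.

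The main obstacle is the upgrade step, namely verifying that $\phi(p)$ is in $\sigma$-henselian configuration at a tail of $(\phi(a_\rho))_\rho$. This requires a careful Taylor-expansion computation that leverages minimality of the complexity of $p$ within $\Wit((a_\rho)_\rho)$ to control the valuations of the derivatives $p_J(a_\rho)$, combined with linear difference closedness of $\residue{K}$ (which ensures the residual behavior of those derivatives is non-degenerate). The saturation bound $\vert\valuegroup{K}\vert^+$ is calibrated exactly to the maximal possible length $\vert\valuegroup{K}\vert$ of the pseudo-Cauchy sequences arising in the back-and-forth.
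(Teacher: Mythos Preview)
The paper does not prove this statement: it is stated as a \emph{Fact} with a citation to \cite[Corollary~5.10]{onay2013quantifier} and no proof is given in the present paper. There is therefore nothing in the paper to compare your argument against.

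That said, your sketch is a faithful outline of the argument in the cited reference: a Zorn-maximal partial embedding, a pseudo-Cauchy sequence approximating a missing element, saturation to realize a pseudolimit on the target side, and the upgrade to an actual root via $\sigma$-henselian configuration. One point you gloss over deserves care: you write ``after passing to an equivalent sequence we may assume $p \in \Wit_{\mathrm{min}}((a_\rho)_\rho)$ with $p(a)=0$'', but a minimal-complexity witness for the sequence need not vanish at your specific $a \in L$. The clean way around this is not to insist on embedding $\extnot{M}{a}$ itself; rather, take any $p \in \Wit_{\mathrm{min}}((a_\rho)_\rho)$, use \cref{adjoin-limit}(2) to produce an abstract immediate $\sigma$-algebraic extension $\extnot{M}{a'}$ with $p(a')=0$ and $a_\rho \pseudoconverges a'$, embed \emph{that} into both $L$ (by $\sigma$-algebraic maximality of $L$, via \cref{inversive-kaplansky} applied over $M$, or more directly since $L$ must contain a pseudolimit root) and into $K'$ (via the $\sigma$-henselian configuration argument you describe). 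This yields a proper extension of $\phi$, contradicting maximality. With that adjustment your plan is correct.
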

\begin{lemma}
    \label{embed-in-saturated}
    Suppose $(K,\val,\sigma)$ is a model of $\primary$ and $(\residue{K},\Res{\sigma})$ is linearly difference closed. 
    
    Let $(K,\val,\sigma) \subseteq (K^*,\val,\sigma)$ be a $\sigma$-separable extension, where $(K^*,\val,\sigma)$ is a model of $\primary$, is $\vert \valuegroup{K} \vert^+$-saturated and weakly $\sigma$-henselian. Then there is a $K$-embedding $f\colon \smax{K} \to K^*$ such that $K^*$ is $\sigma$-separable over $f(\smax{K})$.
\end{lemma}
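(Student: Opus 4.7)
The plan is to reduce to the inversive Kaplansky uniqueness result (\cref{lem:embedding-inv}) and then descend back using the defining property of $\primary$. First I would apply \cref{lem:embedding-inv} with base $\inv{K}$ and target $\inv{K^*}$. The base is inversive with linearly difference closed residue difference field (equal to that of $K$, since $\Res{\sigma}$ is surjective on $K$ by standing assumption). The target $\inv{K^*}$ is strongly $\sigma$-henselian by \cref{inv-strongly-hens}, and the required $|\valuegroup{\inv{K}}|^+$-saturation collapses to $|\valuegroup{K}|^+$-saturation since $\valuegroup{\inv{K}} = \valuegroup{K}$. By \cref{transf-alg-max}, $\invmax{K}$ is a $\sigma$-algebraically maximal immediate $\sigma$-algebraic extension of $\inv{K}$, so \cref{lem:embedding-inv} yields an embedding $\phi\colon\invmax{K}\hookrightarrow\inv{K^*}$ over $\inv{K}$.

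I then restrict $\phi$ to $\smax{K} \subseteq \invmax{K}$, obtaining $\phi\colon\smax{K}\to\inv{K^*}$, and argue that $\phi(\smax{K}) \subseteq K^*$. For each $\alpha \in \smax{K}$, \cref{separability-cheat-sheet-2}(3) provides $f(X)\in\polring{K}$ with $f(\alpha) = 0$ and $f'(\alpha) \neq 0$; applying $\phi$, the image $\phi(\alpha) \in \inv{K^*}$ satisfies the same equations. Since $K^*$ is a model of $\primary$, it is relatively $\sigma$-separably $\sigma$-algebraically closed in $\inv{K^*}$, and hence $\phi(\alpha) \in K^*$. For the final claim, that $K^*$ is $\sigma$-separable over $\phi(\smax{K})$, I would invoke \cref{separability-cheat-sheet-2}(4) applied to the tower $K \subseteq \phi(\smax{K}) \subseteq K^*$ of models of $\primary$: the bottom extension is $\sigma$-separably $\sigma$-algebraic and the total extension $K \subseteq K^*$ is $\sigma$-separable by hypothesis, so the top extension $\phi(\smax{K}) \subseteq K^*$ is $\sigma$-separable.

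The main obstacle is the very first step: the saturation of $\inv{K^*}$ is not obviously inherited from that of $K^*$, since $\inv{K^*}$ sits strictly above $K^*$ and elements of the form $\sigma^{-n}(b)$ are not directly accessible via types over $K^*$. However, the use of saturation in \cref{lem:embedding-inv} is confined to realizing pseudolimits of pseudo-Cauchy sequences of length $\leq |\valuegroup{K}|$; since $K^* \subseteq \inv{K^*}$ is immediate (in fact dense, by \cref{hens-implies-inv}), any such sequence from $\inv{K}$ is equivalent to one with terms in $K^*$, whose pseudolimit is realizable in $K^*$ itself by $|\valuegroup{K}|^+$-saturation. One could alternatively bypass the reduction to the inversive world entirely, by a direct transfinite construction of the embedding into $K^*$, at each stage using weak $\sigma$-henselianity to solve $\sigma$-separably $\sigma$-algebraic equations in $\sigma$-henselian configuration, and saturation of $K^*$ to realize pseudolimits of pseudo-Cauchy sequences of $\sigma$-separably $\sigma$-algebraic type.
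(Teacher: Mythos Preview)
Your core strategy matches the paper's: invoke the inversive embedding result \cref{lem:embedding-inv}, then use that $K^*$ is relatively $\sigma$-separably $\sigma$-algebraically closed in $\inv{K^*}$ to force the image of $\smax{K}$ to land in $K^*$. Your argument for the final $\sigma$-separability claim via \cref{separability-cheat-sheet-2}(4) is correct and in fact more explicit than the paper's.

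The substantive difference is in how the saturation obstacle is handled. You try to apply \cref{lem:embedding-inv} directly with target $\inv{K^*}$, and then patch the missing saturation by opening up the Durhan--Onay proof and arguing via density that pseudolimits can be found in $K^*$. This is plausible but not fully justified as written: the pseudo-Cauchy sequences arising in that construction live not in $\inv{K}$ but in the successive intermediate fields being built inside $\inv{K^*}$, and the minimal difference polynomials whose roots you must locate have coefficients there too, so the reduction to types over $K^*$ requires more care than you give it. Your alternative suggestion (a direct transfinite construction into $K^*$) amounts to the same thing and would need the same care.

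The paper sidesteps this entirely with a trick it attributes to Dor--Halevi: pass to a $|K|^+$-saturated elementary extension $L \succeq \inv{K^*}$, apply \cref{lem:embedding-inv} there as a black box to get $\theta\colon \invmax{K} \hookrightarrow L$, and then argue by compactness. For any finite tuple $\alpha \in \smax{K}^r$ and any quantifier-free $\varphi$ over $K$ with $\smax{K} \models \varphi(\alpha)$ (strengthened so that solutions are simple roots over $K$), the tuple $\theta(\alpha)$ witnesses $\varphi$ in $L$; elementarity gives a witness in $\inv{K^*}$; and since the witness consists of simple roots of difference polynomials over $K \subseteq K^*$, relative closure forces it into $K^*$. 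Saturation of $K^*$ then yields the embedding. This keeps \cref{lem:embedding-inv} as a black box, at the cost of the extra compactness layer.
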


\begin{proof}
    Consider a $\vert K \vert^+$-saturated (and thus in particular $\vert \valuegroup{K} \vert^+$-saturated) elementary extension $L$ of $\inv{(K^*)}$. Then, by \cref{inv-strongly-hens} and \cref{lem:embedding-inv}, $\inv{(\smax{K})}$ embeds into $L$ over $\inv{K}$, in particular over $K$, along a map $\theta$. Now, by saturation, it is enough\footnote{This clever trick is taken from \cite[Proposition 5.8]{dor2023contracting}.} to exhibit a witness in $K^*$ for every quantifier-free $\Lvfe$-formula $\varphi$ over $K$ such that, for some finite tuple $\alpha = (a_1, \dots a_r)$ in $\smax{K}$, $\smax{K} \vDash \varphi(\alpha)$. Upon strenghtening $\varphi$, we may assume all of its solutions are simple roots of difference polynomials over $K$.
    \par The tuple $(\theta(a_1), \dots \theta(a_r)) \in L^r$ satisfies $\varphi$. As we have taken $\inv{(K^*)} \preceq L$, then there exist elements ${b_1, \dots b_r \in \inv{(K^*)}}$ such that $\inv{(K^*)} \vDash \varphi(b_1, \dots b_r)$. But $b_1, \dots b_r$ are simple roots of difference polynomials over $K$, in particular over $K^*$, and thus they are in $K^*$ already, as $K^*$ is relatively $\sigma$-separably $\sigma$-algebraically closed in its inversive hull.
\end{proof}

\runin{Adding residues}
We establish the two ways of increasing the residue difference field in a back-and-forth situation. Note that the proofs of the following two lemmas from \cite{azgin2011elementary} only rely on $\Res{\sigma}$ and $\Val{\sigma}$ being surjective, and thus apply also in our context. We let $(K,\val,\sigma) \subseteq (L,\val,\sigma)$ be a valued difference field extension.
\begin{fact}[{\cite[Lemma 2.5]{azgin2011elementary}}]\label{adjoin-transc}
    Let $a \in \O_{\core{L}}$ and assume that $\alpha = \res(a)$ is $\sigma$-transcendental over $\residue{K}$. Then,
    \begin{enumerate}
        \item $\val(p(a)) = \min_{I}\val(b_{I})$, for each $p(X) = \sum_{I} b_{I} X^{I}$ over $\O_K$,
        \item $\ext{K}{a}$ has residue difference field $\ext{\residue{K}}{\alpha}$ and value difference group $\valuegroup{K}$,
        \item if $b$ is in $\O_{\core{L'}}$, for some extension $L'$ of $K$, such that $\res(b)$ is $\sigma$-transcendental over $\residue{K}$, then there is a valued difference field isomorphism $\ext{K}{a} \to \ext{K}{b}$ over $K$, sending $a$ to $b$.
    \end{enumerate}
\end{fact}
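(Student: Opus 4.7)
The plan is to prove parts (1), (2), (3) in order, with (1) as the engine powering the others.

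For (1), I will normalize: set $\mu \coloneqq \min_I \val(b_I)$, pick $c \in K$ with $\val(c) = \mu$, and replace $p$ by $p/c \in \polring{\O_K}$, which then has at least one unit coefficient. Because $\sigma^j(a) \in \O_L$ with $\res(\sigma^j(a)) = \Res{\sigma}^j(\alpha)$ for every $j \geq 0$, reducing modulo $\m_L$ yields
\[
    \res(p(a)/c) = \sum_I \res(b_I/c)\,\alpha^I = q(\alpha),
\]
with $q \in \polring{\residue K}$ non-zero by choice of $c$. The hypothesis that $\alpha$ is $\sigma$-transcendental over $\residue{K}$ forces $q(\alpha)\neq 0$, so $\val(p(a)/c) = 0$, whence $\val(p(a)) = \mu$.

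For (2), I will first transfer (1) to arbitrary shifts of $a$. Since $\Val{\sigma}$ is automatically injective (as an endomorphism of an ordered abelian group), $a \in \O_{\core L}$ implies $\sigma^{-m}(a) \in \O_{\core L}$ for all $m \geq 0$, with $\res(\sigma^{-m}(a)) = \Res{\sigma}^{-m}(\alpha)$. This residue is still $\sigma$-transcendental over $\residue K$, because applying $\Res{\sigma}^m$ coefficient-wise turns any non-zero witness $r \in \polring{\residue K}$ into a non-zero witness for $\alpha$. Hence (1) applies with $\sigma^{-m}(a)$ in place of $a$, giving the uniform conclusion that, for any ordinary polynomial $P$ over $K$ evaluated at finitely many $\sigma^j(a)$ ($j \in \mathbb{Z}$), the valuation equals the minimum of the coefficient valuations. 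Every element of $\ext{K}{a}$ is a ratio of two such evaluations; its valuation therefore lies in $\valuegroup K$, and after rescaling numerator and denominator to valuation zero, its residue is a ratio in $\ext{\residue K}{\alpha}$. The opposite containments are immediate.

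For (3), I will observe that (1) also implies $a$ is \emph{inversively} $\sigma$-transcendental over $K$: any non-trivial algebraic relation among the $\sigma^j(a)$, $j \in \mathbb{Z}$, could be shifted to involve only non-negative $j$ and rescaled to have a unit $\O_K$-coefficient, contradicting (1). The same holds for $b$. The map $\sigma^j(a) \mapsto \sigma^j(b)$ therefore extends uniquely to a difference $K$-algebra isomorphism $K[\sigma^j(a) : j \in \mathbb{Z}] \to K[\sigma^j(b) : j \in \mathbb{Z}]$, and then to the fraction fields, producing the required difference field isomorphism $\ext{K}{a} \to \ext{K}{b}$. Applying the symmetric version of (1) to both $a$ and $b$ shows that the valuation of any rational expression is determined purely by its coefficients, so this isomorphism is valuation-preserving.

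The main obstacle I anticipate is the bookkeeping in (2): one must simultaneously verify that each negative shift $\sigma^{-m}(a)$ is residue-transcendental and that the residues of rational expressions really do land in $\ext{\residue K}{\alpha}$; here the hypothesis $a \in \O_{\core L}$, rather than merely $a \in \O_L$, is essential. Once these are in place, everything reduces to a clean application of (1) to appropriate shifts.
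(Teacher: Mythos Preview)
Your proof is correct and follows the natural line of argument: normalize to get a unit coefficient, use $\sigma$-transcendence of $\alpha$ to force the residue to be non-zero, then propagate to negative shifts via $a \in \O_{\core L}$ and bootstrap to all of $\ext{K}{a}$. The paper does not supply its own proof of this statement; it is recorded as a Fact with a citation to \cite{azgin2011elementary}, together with the remark that the argument there only uses surjectivity of $\Res\sigma$ and $\Val\sigma$. Your write-up is effectively that cited argument, carried out in the present notation, so there is nothing to compare.
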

\begin{fact}[{\cite[Lemma 2.6]{azgin2011elementary}}]\label{adjoin-alg}
    Let $a \in \O_{L}$ and assume that $\alpha = \res(a)$ is $\sigma$-algebraic over $\residue{K}$. Let $\res{p}(X) \in \polring{\residue{K}}$ be a difference polynomial of minimal complexity such that $\res(p)(\alpha) = 0$, and let $p(X)$ have the same complexity and be such that $p(a) = 0$. Then,
    \begin{enumerate}
        \item $\extnot{K}{a}$ has residue difference field $\extnot{\residue{K}}{\alpha}$ and value difference group $\valuegroup{K}$,
        \item if $b$ is in $\O_{L'}$, for some extension $L'$ of $K$, such that $\res{p}$ is a difference polynomial of minimal complexity such that $\res(p)(\res(b)) = 0$, with $p(b) = 0$, then there is a valued difference field isomorphism $\extnot{K}{a} \to \extnot{K}{b}$ over $K$, sending $a$ to $b$.
    \end{enumerate}
\end{fact}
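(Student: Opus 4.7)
The plan is to give a structural description of $\extnot{K}{a}$ in terms of $\alpha$, $\res(p)$, and $(K,\val,\sigma)$, which immediately yields (1); part (2) then follows because the description is manifestly symmetric between $(a,\alpha)$ and $(b,\res(b))$.

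The foundational step is the following computation: suppose $q \in \polring{\O_K}$ has complexity strictly less than that of $p$ and some coefficient of value $0$; then $\val(q(a)) = 0$ and $\res(q(a)) = \res(q)(\alpha)$. Indeed, the residue equality comes from direct expansion, using $\val(\sigma^j(a)) \geq 0$ for all $j$ (as $\sigma$ preserves $\O$); the valuation claim then follows because $\res(q) \in \polring{\residue{K}}$ is non-zero and of complexity at most that of $q$, hence strictly less than that of $\res(p)$, so by minimality of $\res(p)$ we have $\res(q)(\alpha) \neq 0$. A first corollary is that $p$ itself is of minimal complexity among difference polynomials over $K$ with $p(a) = 0$: otherwise, rescaling a witness and applying the step would force $\val(q(a)) = 0$ while $q(a) = 0$. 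In particular, $\{a, \sigma(a), \ldots, \sigma^{n-1}(a)\}$ is algebraically independent over $K$.

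The structural description of $\extnot{K}{a}$ then follows by pseudo-division modulo the relations $\sigma^j(p(a)) = 0$, for $j \geq 0$. Writing $p(X) = P(X,\sigma(X),\ldots,\sigma^n(X))$ with $\deg_{X_n} P = d$ and $X_n$-leading coefficient $c_d \in K[X_0,\ldots,X_{n-1}]$, the relation $P^{\sigma^j}(\sigma^j(a),\ldots,\sigma^{n+j}(a)) = 0$ expresses $\sigma^{n+j}(a)^d$ as a rational combination over $K$ of $\sigma^j(a), \ldots, \sigma^{n+j-1}(a)$ and smaller powers of $\sigma^{n+j}(a)$, after inverting the leading coefficient $\sigma^j(c_d(a,\sigma(a),\ldots,\sigma^{n-1}(a)))$. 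Now $c_d$, viewed as a difference polynomial, has complexity at most $(n-1, \ast, \ast)$, strictly less than that of $p$, so by the foundational step its evaluation at $a$ (and hence its $\sigma^j$-shift) has value $0$, i.e. is a unit in $\O_L$. Iterating downwards in $j$ and in degrees, every element of $\extnot{K}{a}$ can be written as $q(a)/r(a)$ with $q, r \in \polring{K}$ both of complexity strictly less than that of $p$ and $r(a) \neq 0$. Combining with the foundational step, after normalizing coefficients we get $\val(q(a)/r(a)) \in \valuegroup{K}$ and, whenever non-negative, $\res(q(a)/r(a)) = \res(q)(\alpha)/\res(r)(\alpha) \in \extnot{\residue{K}}{\alpha}$. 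This proves (1).

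For (2), the same analysis applies verbatim with $b$ in place of $a$ and $\res(b)$ in place of $\alpha$, since $\res(p)$ is of minimal complexity for $\res(b)$ by hypothesis. Hence the difference-ring map $\phi\colon \extnot{K}{a} \to \extnot{K}{b}$ sending $\sigma^i(a) \mapsto \sigma^i(b)$ is well-defined and an isomorphism of difference fields, as membership in its kernel reduces to a normal-form reduction in $\polring{K}$ that is independent of the chosen root. The valuation is preserved because $\val(q(a)/r(a))$ is computable from $K$-data together with the information of whether $\res(q)(\alpha)$ vanishes, and by the minimality of $\res(p)$ this last condition coincides with $\res(q)(\res(b)) = 0$. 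The principal obstacle I anticipate is the bookkeeping in the pseudo-division step: one must check that each successive reduction stays within the ``complexity strictly less than $p$'' regime for the coefficients one inverts, so that the foundational step applies and no denominators of uncontrolled value enter. The minimality of $\res(p)$ is precisely what makes the induction go through.
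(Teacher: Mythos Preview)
The paper does not give its own proof of this statement; it is quoted as a fact from Azg{\i}n--van den Dries. Your foundational step (for $q$ of complexity strictly below that of $p$, normalised in $\O_K$, one has $\val(q(a))=0$ and $\res(q(a))=\res(q)(\alpha)$) is correct and is indeed the engine of the argument. The corollary that $p$ has minimal complexity among difference polynomials over $K$ vanishing at $a$ is also correct.

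The gap is in the structural claim that pseudo-division produces a representation $q(a)/r(a)$ with $q,r$ of complexity \emph{strictly less than that of $p$}. Dividing by $p^{\sigma^{j}}$ only lowers the degree of $\sigma^{n+j}(X)$ below $d$; it does not eliminate $\sigma^{n+j}(X)$, so the order of the remainder stays at $n+j$, and the complexity is $(n+j,\,<d,\,\ast)$, which is \emph{larger} than that of $p$ in the lexicographic order once $j\geq 1$. A concrete counterexample: take $n=0$, $d=2$, say $p(X)=X^2-c$ with $c\in\O_K^\times$ and $\res(c)$ a non-square. ``Complexity $< (0,2,2)$'' forces $q,r$ to be $K$-linear in $X$, so your claim would give $\sigma(a)\in K(a)=K(\sqrt{c})$; but $\sigma(a)=\pm\sqrt{\sigma(c)}$ need not lie there. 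Thus your foundational step cannot be applied to the outputs of pseudo-division, and the computation of $\val(q(a)/r(a))$ in part~(1) is unsupported.

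The repair is to run the induction on the tower $K_m=K(a,\sigma(a),\ldots,\sigma^m(a))$ rather than on complexity in $\polring{K}$. For $m<n$ your foundational step shows the valuation is Gauss. For $m\geq n$ one proves that $K_{m-1}\subseteq K_m$ is unramified with residue extension precisely $\residue{K_{m-1}}(\Res{\sigma}^{\,m}(\alpha))$; this needs the equality $[K_m:K_{m-1}]=[\residue{K_{m-1}}(\Res{\sigma}^{\,m}(\alpha)):\residue{K_{m-1}}]$, whose non-trivial inequality is obtained by applying the foundational step \emph{relative to $K_{m-1}$} (using the inductive description of $\residue{K_{m-1}}$ and the minimality of $\res(p)^{\Res{\sigma}^{\,m-n}}$ over it). This is the argument carried out in Azg{\i}n--van den Dries; your pseudo-division idea reappears there, but localised at each step of the tower rather than globally in $\polring{K}$.
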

\begin{remark}
    Note that, since $(\residue{K},\Res{\sigma})$ is inversive, by exchanging $p(X)$ with the difference polynomial $\shifted{p}^{\sigma^{-m}}(X)$ defined in \cref{shift-reference} we can always choose $p(X)$ such that $p' \neq 0$. Thus, if $(K,\val,\sigma)$ is a model of $\primary$, the resulting extension is $\sigma$-separably $\sigma$-algebraic over $K$. Upon passing to the $\primary$-closure (thanks to \cref{primary-closure}), we may even assume that the extension is a model of $\primary$, but it might not be generated by one element anymore.
\end{remark}

\runin{Regular elements} There are several ways to increase the value difference group; here we take the same route as \cite{onay2013quantifier}, which goes via adjoining a very generic element (as opposed to, for example, the route taken by \cite{rideau2017some}).

\begin{definition}
    Let $a \in K$ and $p(X) = \sum_{I} a_{I} X^{I} \in \polring{K}$. We say that \emph{$a$ is regular for $p$} if $\val(p(a)) = \min_{I} \{\val(a_{I})+\val(a^{I})\}$.
    Let $(K,\val,\sigma) \subseteq (L,\val,\sigma)$ be an extension of valued difference fields. We say that $a \in L$ is \emph{generic over $K$} if it is regular for every $p \in \polring{K}$.
\end{definition}

\begin{proposition}[\nonsurj]\label{generics-exist}
    Let $\model{K} \vDash \WTHzero$ and let $(E,\val,\sigma) \subseteq (K,\val,\sigma)$ be a valued difference subfield, possibly with non-inversive residue difference field and value difference group. Assume that $\model{K}$ is $\max(\vert \residue{E}\vert^+,\aleph_0)$-saturated, and let $\gamma \in \valuegroup{K}$. Then, there is $a \in \core{K}$, generic over $E$, such that:
    \begin{enumerate}
        \item $\val(a) = \gamma$,
        \item $\ext{E}{a}$ has value group $\ext{\valuegroup{E}}{\gamma}$,
        \item if $b$ is another generic over $E$ in some extension $(L,\val,\sigma)$, with $\val(b) = \gamma$ and $b \in \core{L}$, then the map $a \mapsto b$ gives rise to a valued difference field isomorphism $\ext{E}{a} \to \ext{E}{b}$ over $E$.
    \end{enumerate}
    If, moreover, $(E,\sigma) \subseteq (K,\sigma)$ is $\sigma$-separable, then $\ext{E}{a} \subseteq K$ is still $\sigma$-separable. Analogously, if $(E,\sigma) \subseteq (L,\sigma)$ is $\sigma$-separable, then $\ext{E}{b} \subseteq L$ is still $\sigma$-separable.
\end{proposition}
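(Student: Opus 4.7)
The plan is to realize $a$ as a solution of the partial type $\pi(x)$ over $E$ (together with a well-chosen base point $a_0$) expressing ``$x \in \core K$, $\val(x) = \gamma$, and $x$ is regular for every $p \in \polring E$''. Items (1)--(3) and $\sigma$-separability will follow formally from regularity together with \cref{inversive-relative-transc}. Since $\model K$ is $\aleph_0$-saturated and weakly $\sigma$-henselian, \cref{core-dense} gives that $\core K$ is dense in $K$, so $\valuegroup{\core K} = \valuegroup K$ and $\residue{\core K} = \residue K$. Pick $a_0 \in \core K$ with $\val(a_0) = \gamma$, and look for $a$ of the form $a_0 u$ with $u \in \O_{\core K}^\times$. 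For $p(X) = \sum_I b_I X^I \in \polring E$, direct computation yields $\val(b_I a_0^I u^I) = \val(b_I) + I(\gamma)$, independent of $u$; setting $m_p := \min_I\{\val(b_I) + I(\gamma)\}$, $S_p := \{I : \val(b_I) + I(\gamma) = m_p\}$, and picking $c_p \in K$ of valuation $m_p$, the regularity of $a_0 u$ at $p$ becomes the non-vanishing at $\res(u)$ of the non-zero residue difference polynomial
\[
    q_p(Y) := \sum_{I \in S_p} \res\!\left(c_p^{-1} b_I a_0^I\right) Y^I \in \polring{\residue K}.
\]

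For finite consistency of $\pi$, given $p_1, \dots, p_N \in \polring E$, the coefficients of $q_{p_1}, \dots, q_{p_N}$ all lie in a subfield of $\residue K$ of cardinality at most $|\residue E| + \aleph_0$; by $|\residue E|^+$-saturation of $\residue K$ (inherited from $\model K$ via interpretability in $\RV_K$, cf.\ \cref{interpret-value}), one finds $\beta \in \residue K$ that is $\sigma$-transcendental over this subfield, so $q_{p_j}(\beta) \neq 0$ for every $j$; lift $\beta$ to $u \in \O_{\core K}^\times$ via $\residue{\core K} = \residue K$. With finite consistency established, the saturation of $\model K$ then delivers a single $a \in \core K$ realizing the full type $\pi$.

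From regularity we obtain $\val(f(a)) = m_f \in \ext{\valuegroup E}{\gamma}$ for every $f \in \polring E$, which forces $\valuegroup{\ext E a} = \ext{\valuegroup E}{\gamma}$, giving (2); (1) is immediate. Any non-zero $p(a) = 0$ would force $m_p = \infty$, impossible, so $a$ is $\sigma$-transcendental over $E$, and the identities $\val(p(a)) = m_p = \val(p(b))$ show that the pure difference field isomorphism $\ext E a \to \ext E b$ sending $a \mapsto b$ is valuation-preserving, proving (3). For $\sigma$-separability, assume $E \subseteq K$ is $\sigma$-separable; then $a \in \core K$ is $\sigma$-transcendental over the $\primary$-closure $\FE E \subseteq K$ of $E$, so \cref{inversive-relative-transc} applied to $\FE E \subseteq K$ yields that $\ext{\FE E}{a} \subseteq K$ is $\sigma$-separable, and since $\ext E a \subseteq \ext{\FE E}{a}$ is a $\sigma$-algebraic sub-extension the transitivity properties in \cref{separability-cheat-sheet} give $\ext E a \subseteq K$ $\sigma$-separable. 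The case of $b$ in $L$ is symmetric.

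The main obstacle is organizing the (potentially large) family of regularity conditions so that $|\residue E|^+$-saturation genuinely suffices: although $|\polring E| = |E| + \aleph_0$ could exceed $|\residue E|$, once the base point $a_0$ is fixed each individual regularity reduces to a non-vanishing in $\residue K$ whose defining data lies in a subfield of $\residue K$ of cardinality bounded by $|\residue E| + \aleph_0$, so the crucial finite-consistency step happens entirely on the $\RV$-sort and is driven by its saturation.
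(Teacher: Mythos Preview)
Your overall approach matches the paper's: fix a base point $a_0 \in \core{K}$ of value $\gamma$, reduce each regularity condition on $a = a_0 u$ to the non-vanishing of a residue difference polynomial $q_p$ at $\res(u)$, and then find a residue avoiding all of these. The derivation of (1)--(3) from regularity is correct, and the $\sigma$-separability argument via \cref{inversive-relative-transc} is fine (in fact under the hypothesis $E\subseteq K$ $\sigma$-separable one already has $E=\FE{E}$ by \cref{primary-closure}, so that detour is harmless).

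The gap is the final compactness step. As you set it up, $\pi(x)$ has one regularity condition per $p\in\polring{E}$ and therefore lives over a parameter set of size $|E|$, which can far exceed $|\residue_E|$; the assumed $\max(|\residue_E|^+,\aleph_0)$-saturation of $\model{K}$ is then not enough to realize $\pi$, and verifying finite satisfiability does not bridge that gap. Your closing paragraph diagnoses the danger but does not cure it: saying that \emph{each individual} condition has defining data in a small subfield of $\residue_K$ is not the same as exhibiting a \emph{single} subfield of size $\leq|\residue_E|$ over which \emph{all} the $q_p$ simultaneously live. The fix is to avoid compactness in $\model{K}$ altogether. Choosing each $c_p$ of the form $b_{I_0}a_0^{I_0}$, every coefficient of every $q_p$ lies in the single difference subfield $F\subseteq\residue_K$ generated by $\residue_E$ together with the countably many elements $\res(e_J\,a_0^{J})$ (one for each multi-index $J$ with $-J(\gamma)\in\valuegroup_E$, where $e_J\in E$ has value $-J(\gamma)$); since $|F|\leq|\residue_E|$, the $|\residue_E|^+$-saturation of $\residue_K$ produces a single $\beta$ that is $\sigma$-transcendental over $F$, and any lift $u\in\O_{\core{K}}^\times$ then makes $a=a_0u$ regular for every $p$ at once. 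This is what the paper does: it produces \emph{one} $\sigma$-transcendental residue and argues directly that the resulting element is generic for all $p$, rather than appealing to compactness over $E$.
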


\begin{remark}
    Note that this is essentially a twisted version of \cref{adjoin-transc}, although it is from a different point of view; \cref{adjoin-transc} shows how the isomorphism type is uniquely determined for a generic with $\gamma = 0$.
\end{remark}

\begin{proof}[Proof of \cref{generics-exist}]
    Under these assumptions, $\core{K} \subseteq K$ is dense (\cref{core-dense}), and thus immediate.
    By $\vert \residue{E} \vert^+$-saturation, there is $\beta \in \residue{K}$ which is $\sigma$-transcendental over $\residue{E}$. We let $b \in \O_{\core{K}}$ be such that $\res(b) = \beta$. Then, by construction, $b$ is a generic of value zero. If $c \in \core{K}$ is such that $\val(c) = \gamma$, then, we claim that $a \coloneqq cb$ is still a generic. Indeed, if $p(X) = \sum_{I} a_{I} X^{I} \in \polring{E}$, we find $d \in \O_K$ such that $p(cX) = dq(X)$, with $\val(d) = \min_{I}(\val(a_{I}) + \val(c^{I}))$ and $q(X) \in \polring{\O_{K}} \setminus \polring{\m_K}$. By construction, $\val(q(b)) = 0$, and thus
    \[
        \val(p(a)) = \val(p(cb)) = \val(d) + \val(q(b)) = \val(d),
    \]
    as required.
    The rest of the properties then follow (arguing as in the proof of \cref{adjoin-transc}) from the fact that if $a$ and $b$ are generics over $E$, then for every $p(X) = \sum_I a_I X^I \in \polring{E}$ we can compute 
    \[
        \val(p(a)) = \min_I \{\val(a_I) + \val(a^I)\} = \min_I \{\val(a_I) + I(\gamma)\} = \val(p(b)),
    \]
    and thus the difference field isomorphism $\ext{E}{a} \to \ext{E}{b}$ can be upgraded to a valued field isomorphism.
\end{proof}
\begin{remark}
    \cref{generics-exist} is proved essentially in the same way as \cite[Lemma 6.1]{onay2013quantifier}. Note, however, that the statement in that paper is incorrect: namely, it states that $\ext{E}{a}$ has the same residue difference field as $E$. This is false: if, for example, $\gamma = 0$, then we would be extending the residue difference field with a $\sigma$-transcendental element (the residue of the generic). This does not create issues in the surjective case, but in our setting it is precisely the reason why we need \cref{inversive-lift}.
\end{remark}
\par We next prove that genericity of an element $b$ is really a property of $\rv(b)$.

\begin{lemma}\label{generics-rv}
    Let $\model{K} \vDash \WTHzero$ and let $\model{A} \leq_{\Lsep} \model{K}$ be a valued difference subfield. Let $b \in K$ be generic over $A$, and let $b' \in K$ be such that $\rv(b') = \rv(b)$. Then $b'$ is generic over $A$.
\end{lemma}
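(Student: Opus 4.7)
The approach is to verify, for each $p(X) = \sum_I a_I X^I \in \polring{A}$, that $b'$ is regular for $p$; since $p$ is arbitrary, this yields the genericity of $b'$. We may dismiss the trivial case $b = 0$ (which forces $b' = 0$), so assume $b \neq 0$ and set $\gamma := \val(b) = \val(b')$ and $v_0 := \min_I\{\val(a_I) + I(\gamma)\}$. The goal becomes proving $\val(p(b')) = v_0$.

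The key observation is that $\rv$ is multiplicative on $K^\times$ and intertwines with $\sigma$, in the sense that $\rv \circ \sigma^k = \Rv{\sigma}^k \circ \rv$ for every $k \geq 0$. Consequently, for every multi-index $I$,
\[
    \rv(b^I) = \prod_{k} \Rv{\sigma}^k(\rv(b))^{i_k} = \prod_{k} \Rv{\sigma}^k(\rv(b'))^{i_k} = \rv(b'^I).
\]
Since $b, b' \neq 0$ forces $b^I, b'^I \neq 0$ with common valuation $I(\gamma)$, this $\rv$-equality unpacks (for $I \neq 0$) to the strict estimate $\val(b^I - b'^I) > I(\gamma)$.

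The plan is then to decompose $p(b') = p(b) + \sum_I a_I (b'^I - b^I)$. By genericity of $b$ we have $\val(p(b)) = v_0$, and each summand of the remainder satisfies
\[
    \val\bigl(a_I(b'^I - b^I)\bigr) > \val(a_I) + I(\gamma) \geq v_0
\]
(with the $I = 0$ term being outright zero), so the remainder has valuation strictly greater than $v_0$. Hence $\val(p(b')) = v_0 = \min_I\{\val(a_I) + I(\val(b'))\}$, confirming regularity of $b'$ for $p$. No deep obstacle arises: the entire argument is driven by the multiplicativity of $\rv$ and its equivariance with $\sigma$, with the genericity hypothesis on $b$ only invoked to pin down $\val(p(b)) = v_0$.
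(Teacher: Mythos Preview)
Your proof is correct and follows essentially the same approach as the paper: both arguments rest on the observation that $\rv(b^I) = \rv((b')^I)$ for every multi-index $I$, and then transfer regularity from $b$ to $b'$. The only cosmetic difference is that the paper phrases the conclusion via the well-definedness of $\bigoplus_I \rv(a_I)\rv(b)^I$ (invoking the lemma that $\alpha_1 \oplus \cdots \oplus \alpha_n$ is well-defined iff the valuation of the sum of representatives achieves the minimum), whereas you unpack this into the explicit estimate $\val(p(b') - p(b)) > v_0$ via the ultrametric inequality.
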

\begin{proof}
    Let $p(X) = \sum_{I} a_{I} X^{I} \in \polring{A}$. Since $b$ is generic over $A$,
    \[
        \val(p(b)) = \min_{I} (\val(a_{I})+\val(b^{I})),
    \]
    and thus 
    \[
        \rv(p(b)) = \bigoplus_{I} \rv(a_{I}b^{I}) = \bigoplus_{I} \rv(a_{I}) \rv(b)^{I},
    \]
    so 
    \[
        \rv(p(b')) = \bigoplus_{I} \rv(a_{I})\rv(b')^{I} = \bigoplus_{I} \rv(a_{I}(b')^{I})
    \]
    is well-defined, i.e. $\val(p(b')) = \min_{I}(\val(a_{I})+\val((b')^{I}))$.
\end{proof}

\runin{Auxiliary surjectivity} In general, it is not clear if one can ensure that if $E$ has inversive residue field, then $\ext{E}{a}$ also does. In particular, the residue field might turn non-inversive after adding a generic to $E$ via \cref{generics-exist}.

\begin{example}
    Let $(E,v,\sigma) \subseteq (K,v,\sigma)$ be valued difference fields, with $\Val{\sigma}$ $\omega$-increasing, i.e. for all $\gamma > 0$ and $n > 0$, $\Val{\sigma}(\gamma) > n\gamma$. Assume further that $E$ is not inversive, but $\residue{E}$ is. Let $b \in K$ be generic over $E$, and assume that there is $c \in E \setminus \sigma(E)$ such that, for some $\ell \geq 0$, $\val(c) = \Val{\sigma}^\ell(\val(b))$. Without loss of generality, take $\ell = 0$, i.e. $\val(c) = \val(b)$. Then, $\frac{b}{c}$ is still a generic, thus $\extnot{E}{b} = \extnot{E}{\frac{b}{c}}$ has residue difference field $\extnot{\residue{E}}{\res\left(\frac{b}{c}\right)}$, generated by an element $\sigma$-transcendental over $\residue{E}$; in particular, it is non-inversive. We now argue that $\ext{E}{b}$ also has non-inversive residue difference field. Indeed, consider $\extnot{E}{b}(\sigma^{-1}(b))$: since $\Val{\sigma}$ is $\omega$-increasing, this is an extension by a transcendental element with value outside of the divisible hull of $\valuegroup{\extnot{E}{b}}$. In particular, the residue field does not change. We can see, by induction, that $\ext{E}{b}$ has residue field given by $\extnot{\residue{E}}{\res\left(\frac{b}{c}\right)}$, which is non-inversive. 
\end{example}

This issue is fixed by moving to a residually inversive hull, as we explain now.

\begin{lemma}[\nonsurj]\label{inversive-lift}
    Let $(K,\val,\sigma)$ be a weakly $\sigma$-henselian valued difference field that is also a model of $\primary$. Let $(F,\val,\sigma)$ be a valued difference subfield such that $(F,\sigma) \subseteq (K,\sigma)$ is $\sigma$-separable, and $(\residue{F},\Res{\sigma})$ is not necessarily inversive. Then, there is an extension $F \subseteq F' \subseteq K$ with the following properties:
    \begin{enumerate}
        \item $F \subseteq F'$ is $\sigma$-separably $\sigma$-algebraic,
        \item $F'$ is a model of $\primary$, and thus $F' \subseteq K$ is $\sigma$-separable,
        \item the residue difference field of $F'$ is $\inv{\residue{F}}$,
        \item if $(L,\val,\sigma)$ is another $\sigma$-separable extension of $K$ which is a model of $\primary$ and is weakly $\sigma$-henselian, then $F'$ embeds into $L$ over $K$ so that $L$ is $\sigma$-separable over the image\footnote{Note that the embedding in question is not necessarily unique.}.
    \end{enumerate}
    If, further, $(\valuegroup{F},\Val{\sigma})$ is inversive, then $\valuegroup{F} = \valuegroup{F'}$.
\end{lemma}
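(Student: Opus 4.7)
The plan is to construct $F'$ by iteratively enlarging $F$ inside $K$, adjoining elements whose residues generate $\Res{F}^{\mathrm{inv}}$ over $\Res{F}$ through $\sigma$-separably $\sigma$-algebraic extensions, and then taking the $\primary$-closure inside $K$.

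By \cref{hens-implies-inv}, $\Res{\sigma}$ is surjective on $\Res{K}$, so $\Res{F}^{\mathrm{inv}} \subseteq \Res{K}$. For each $\alpha \in \Res{F}^{\mathrm{inv}} \setminus \Res{F}$ we produce a lift $a \in \O_K$ and a difference polynomial $p(X) \in \polring{F}$ of complexity matching the minimal complexity of $\alpha$ over $\Res{F}$, with $p(a) = 0$ and $p' \neq 0$. If $\alpha$ is field-algebraic over $\Res{F}$, lift its minimal polynomial to some $p \in F[X] \subseteq \polring{F}$ and apply ordinary Hensel (a special case of weak $\sigma$-henselianity) to find $a$. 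If instead $\alpha$ is field-transcendental over $\Res{F}$, then $\beta := \Res{\sigma}(\alpha) \in \Res{F}$ and the minimal-complexity polynomial of $\alpha$ over $\Res{F}$ is $\Res{\sigma}(X) - \beta$, of complexity $(1,1,1)$; lift $\beta = \res(b)$ with $b \in F$ and (assuming $\valuegroup{F} \neq 0$) pick $\epsilon \in F$ with $0 < v(\epsilon) < \infty$, and set $p(X) := \sigma(X) - \epsilon X - b$. As in the proof of \cref{as-deep}, $p$ is in $\sigma$-henselian configuration at any lift of $\alpha$ in $\O_K$, so weak $\sigma$-henselianity produces $a \in \O_K$ with $p(a) = 0$ and $\res(a) = \alpha$. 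In both cases $p' \neq 0$, so \cref{separability-cheat-sheet-2}(2) yields that $F \subseteq \extnot{F}{a}$ is $\sigma$-separably $\sigma$-algebraic, and \cref{adjoin-alg} shows that the residue field extends by $\alpha$ while the value group remains $\valuegroup{F}$.

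Iterating transfinitely along a well-ordering of $\Res{F}^{\mathrm{inv}} \setminus \Res{F}$, we obtain $F \subseteq F'' \subseteq K$ with $\Res{F''} = \Res{F}^{\mathrm{inv}}$, $\valuegroup{F''} = \valuegroup{F}$, and $F \subseteq F''$ $\sigma$-separably $\sigma$-algebraic. We then set $F' := \FE{F''}$, realized inside $K$ via the universal property of \cref{primary-closure}. When $\valuegroup{F}$ is inversive, both $\Res{F''}$ and $\valuegroup{F''}$ are inversive, so the remark after \cref{primary-closure} gives that $F'' \subseteq F'$ is an immediate extension, which establishes (3) and the conditional claim on value groups. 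Since $F'' \subseteq F'$ is field-algebraic and $F$ is a model of $\primary$, \cref{primary-equivalence} applied to $F \subseteq F'' \subseteq F'$ ensures $F \subseteq F'$ remains $\sigma$-separable; as it is $\sigma$-algebraic by construction, this proves (1). Then \cref{separability-cheat-sheet-2}(4) applied to $F \subseteq F' \subseteq K$ gives $F' \subseteq K$ $\sigma$-separable, proving (2). Finally, the universal property (4) is realized by the inclusion $F' \subseteq K \subseteq L$, which is $\sigma$-separable over $F'$ by transitivity of $\sigma$-separability.

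The main obstacle lies in arranging $\sigma$-separability at each residue-adjoining step in the field-transcendental case: the naive polynomial $\sigma(X) - b$ has $p' = 0$, so we must perturb by $\epsilon X$ with $\epsilon \in F$ of small positive value, which requires $\valuegroup{F} \neq 0$. The degenerate case $\valuegroup{F} = 0$ must then be handled by the separate observation that $\Res{F} = F$ would need to be already inversive for the statement to be non-vacuous.
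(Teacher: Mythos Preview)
Your approach is close to the paper's, but there is one genuine gap and two places where the argument is incomplete.

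The gap: you assert that if $\alpha \in \inv{\residue{F}} \setminus \residue{F}$ is field-transcendental over $\residue{F}$ then $\beta \coloneqq \Res{\sigma}(\alpha) \in \residue{F}$. This is false in general: take any $\alpha$ with $\Res{\sigma}^2(\alpha) \in \residue{F}$ but $\Res{\sigma}(\alpha) \notin \residue{F}$. The fix is to iterate level by level, first adjoining preimages under $\Res{\sigma}$ of elements of $\residue{F}$, then repeating with the new base; but then at each step the case analysis and the invocation of \cref{adjoin-alg} must be carried out over the current $F_i$, not the original $F$, and you must check that the hypotheses persist.

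Second, you only establish (3) under the extra assumption that $\valuegroup{F}$ is inversive. It actually holds unconditionally: since $\residue{F''} = \inv{\residue{F}}$ is already inversive and $\FE{F''} \subseteq \inv{F''}$, one has $\residue{\FE{F''}} \subseteq \inv{\residue{F''}} = \residue{F''}$, hence equality.

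Third, on (4): the statement as written has $L$ extending $K$, which makes your inclusion argument correct but trivial. The paper's proof---and its later use in \cref{embedding-theorem}---actually treats $L$ as a $\sigma$-separable weakly $\sigma$-henselian extension of $F$, not of $K$, and builds the embedding into $L$ in parallel with the construction of $F'$, using \cref{adjoin-alg}(2) symmetrically at each step. This is what the footnote about non-uniqueness is pointing at.

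The paper's route differs from yours in that it first replaces $F$ by $\FE{F^h}$; then $\residue{F}$ becomes a model of $\primary$ (via \cite[Lemma~5.9]{dor2023contracting}), so every $\alpha \in \residue{F} \setminus \Res{\sigma}(\residue{F})$ is automatically transcendental over $\Res{\sigma}(\residue{F})$ and only your field-transcendental case remains. The paper then checks directly that $F(b)$ is already a model of $\primary$ (since $b$ is transcendental over $F$ and $\sigma(F(b)) = \sigma(F)(\epsilon b + a)$), so the iteration stays within models of $\primary$ throughout, avoiding the deferred $\FE{}$-step at the end.
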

\begin{proof}
    We build $F'$ recursively, alongside its $F$-embedding into $L$. Both $(K,\val,\sigma)$ and $(L,\val,\sigma)$ are in particular henselian, so we might replace $F$ with $\FE{F^h}$ and assume that $F$ is henselian as well. Note that $F \subseteq F^h$ is an algebraic extension, in particular because $F$ is a model of $\primary$, it is ($\sigma$-)separable, thus the same holds for $F \subseteq \FE{F^h}$ by \cref{primary-equivalence}. Then, $\FE{F^h} \subseteq K$ is $\sigma$-separable. If $(\valuegroup{F},\Val{\sigma})$ is inversive, then $F \subseteq \FE{F^h}$ is a purely residual extension, since $\FE{F^h} \subseteq \inv{(F^h)}$.
    \par We may thus assume that $(F,\val)$ is henselian and is a model of $\primary$. Then, by \cite[Lemma 5.9]{dor2023contracting}, $(\residue{F},\Res{\sigma})$ is a model of $\primary$ as well. In particular, every ${\alpha \in \residue{F} \setminus \Res{\sigma}(\residue{F})}$ is transcendental over $\Res{\sigma}(\residue{F})$.
    \par Let $\alpha \in \residue{F}$ be such that $\alpha \notin \Res{\sigma}(\residue{F})$. Given any $\epsilon \in \m_F$ and any $a \in \O_F$ such that $\res(a) = \alpha$, consider $f(X) = \sigma(X) - \epsilon X - a$. By \cref{as-deep}, there is $b \in \O_K$ such that $\sigma(b)-\epsilon b - a = 0$ and so $\Res{\sigma}(\res(b)) = \alpha$. Note that $\sigma(X)-\alpha$ is of minimal complexity for $\res(b)$ over $\residue{F}$, thus by repeating the same argument symmetrically in $L$ we obtain, by \cref{adjoin-alg}, an embedding $\ext{F}{b} \to L$ such that $L$ is $\sigma$-separable over its image. Note that $b$ is transcendental over $F$, and moreover $\ext{F}{b} = F(b)$, thus $\ext{F}{b}$ is a model of $\primary$ (as $\sigma(\ext{F}{b}) = \sigma(F)(\epsilon b-a) \subseteq F(\epsilon b-a) = F(b)$ is a regular extension). As $F \subseteq \ext{F}{b}$ is $\sigma$-separably $\sigma$-algebraic, it follows that $\ext{F}{b} \subseteq K$ is still $\sigma$-separable. Moreover, as $\residue{F} \subseteq \residue{F}(\res(b))$ and $F \subseteq F(b)$ are both purely transcendental extensions, the valuation is uniquely determined to be the Gauss valuation, and hence $\valuegroup{F(b)} = \valuegroup{F}$.
    \par Upon replacing $\ext{F}{b}$ with $\FE{\ext{F}{b}^h}$ again, we may restart the process, whose limit is the required $F'$ together with an $F$-embedding into $L$.
\end{proof}

\begin{definition}
    We call $F'$ as in \cref{inversive-lift} a \emph{residually inversive hull} of $F$.
\end{definition}

\section{The embedding lemma and Ax-Kochen/Ershov}\label{AKE}
We now have all the tools necessary to prove the embedding lemma, and then deduce relative quantifier elimination and its consequences.

\runin{The embedding lemma} Unless otherwise stated, when we write $\model{A} \leq \model{K}$ we mean that $\model{A}$ is an $\Lsep$-substructure of $\model{K}$ (i.e., if $A$ is a valued difference subfield, that the extension $(A,\sigma) \subseteq (K,\sigma)$ is $\sigma$-separable). When we write an embedding $f \colon \model{A} \to \model{L}$, we mean that it is an $\Lsep$-embedding (i.e., if $A$ is a valued difference subfield, that $L$ is $\sigma$-separable over $f(A)$), and we denote by $f$ the embedding on $\mainsort(\model{A})$, by $f_{\RVsort}$ the induced embedding on $\RVsort(\model{A})$, and by $f_{\valuesort}$ the induced embedding on $\valuesort(\model{A})$. If $\theta_\RV \colon \RV_A \to \RV_L$ and $\theta_\Gamma\colon \valuegroup{A} \to \valuegroup{L}$ are embeddings, we call them \emph{compatible} if the diagram
\[
    \begin{tikzcd}
        {\RVsort(A)} && {\valuesort(A)} \\
        \\
        {\RVsort(L)} && {\valuesort(L)}
        \arrow["{\val_{\rv}}", from=1-1, to=1-3]
        \arrow["{\theta_\RV}"', from=1-1, to=3-1]
        \arrow["{\theta_\Gamma}", from=1-3, to=3-3]
        \arrow["{\val_{\rv}}"', from=3-1, to=3-3]
    \end{tikzcd}
\]
commutes.

\begin{lemma}[Embedding lemma]\label{embedding-theorem}
    Let $\model{K}$ and $\model{L}$ be $\aleph_1$-saturated models of $\WTHzero$.
    
    Let:
    \begin{enumerate}
        \item $\model{A} \leq \model{K}$ be a countable substructure,
        \item $f \colon \model{A} \hookrightarrow \model{L}$ be an embedding,
        \item $\theta_\RV \colon \RV_K \hookrightarrow \RV_L$ and $\theta_\Gamma \colon \valuegroup{K} \hookrightarrow \valuegroup{L}$ be compatible embeddings over $\RVsort(\model{A})$ and $\valuesort(\model{A})$, extending $f_{\RVsort}$ and $f_{\valuesort}$.
    \end{enumerate}
    
    Then, for every $a \in K$, there is a countable substructure $\model{A} \leq \model{A'} \leq \model{K}$ together with an embedding $g \colon \model{A'} \hookrightarrow \model{L}$ extending $f$ such that $a \in A'$, $g_{\valuesort} = \theta_\Gamma\vert_{\valuesort(\model{A}')}$, and $g_{\RVsort} = \theta_{\RV}\vert_{\RVsort(\model{A}')}$.
\end{lemma}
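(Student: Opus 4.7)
The plan is a back-and-forth on countable substructures, handling the non-inversive setting by taking care to preserve $\sigma$-separability of the inclusions $A \leq \model{K}$ and $f(A) \leq \model{L}$ throughout.

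I begin with a sequence of preliminary reductions to tame $\model{A}$. First, using \cref{lambda-subfield}, I replace $A$ by $\operatorname{Frac}(A)$. Next, via \cref{inversive-lift}, I replace $A$ by a residually inversive hull inside $\model{K}$; the universal property in \cref{inversive-lift}(4) extends $f$ compatibly to the $\model{L}$ side. Then, via \cref{embed-in-saturated} (using $\aleph_1$-saturation of $\model{L}$ and countability of $\valuegroup{A}$), I embed $\smax{A}$ into $\model{L}$ over $f(A)$ while also regarding it inside $\model{K}$; the uniqueness in \cref{separable-kaplansky} lets us assume $A = \smax{A}$. Finally, I enlarge $A$ to its relative $\sigma$-separably $\sigma$-algebraic closure $\widetilde{A}$ inside $\model{K}$, obtaining matching simple roots inside $\model{L}$ by $\aleph_1$-saturation (the required simple roots of $f(p)$ exist in $\inv{L}$, and hence in $\model{L}$ since it is a model of $\primary$). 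Each enlargement stays countable. After these reductions, $A$ is weakly $\sigma$-henselian (\cref{tsam-is-hens}) with inversive residue field and value group, is $\sigma$-separably $\sigma$-algebraically maximal, is deeply $\sigma$-ramified (\cref{max-deep-ram}), and is relatively $\sigma$-separably $\sigma$-algebraically closed in $\model{K}$.

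By these closure conditions together with the $\sigma$-separability of $A \leq \model{K}$ (and the observation that ordinary-algebraic extensions in characteristic zero are $\sigma$-separably $\sigma$-algebraic, hence absorbed by $\widetilde{A}$), any $a \in \model{K} \setminus A$ must be $\sigma$-transcendental over $A$. The main case analysis then splits on the behaviour of $\rv(a)$. \emph{Case 1: $\val(a) \notin \valuegroup{A}$.} I use density of $\core{K}$ in $K$ to replace $a$ by some $a' \in \core{K}$ with $\rv(a') = \rv(a)$; by \cref{generics-rv}, $a'$ is generic over $A$. Applying \cref{generics-exist} inside $\model{L}$ finds $b \in \core{L}$ generic over $f(A)$ with $\val(b) = \theta_\Gamma(\val(a))$, and part (3) of \cref{generics-exist} extends $f$. \emph{Case 2: $\val(a) \in \valuegroup{A}$ but $\rv(a) \notin \RV_A$.} Rescale so $\val(a) = 0$ and apply \cref{adjoin-transc} or \cref{adjoin-alg} depending on whether $\res(a)$ is $\sigma$-transcendental or $\sigma$-algebraic over $\residue{A}$; the compatibility of $\theta_\RV$ pinpoints $b \in \model{L}$. \emph{Case 3: $\rv(a) \in \RV_A$.} The extension is immediate, so $a$ is $\sigma$-transcendental over $A$ with no pseudolimit of the matching sequence in $A$. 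Pick a pseudo-Cauchy sequence $(a_\rho)_\rho \subseteq A$ with $a_\rho \pseudoconverges a$; by \cref{dependent-defect}, $(a_\rho)_\rho$ is of $\sigma$-transcendental type. Transport it to $\model{L}$ via $f$, and by $\aleph_1$-saturation of $\model{L}$ realize the partial type stating that $b$ is a pseudolimit of $(f(a_\rho))_\rho$ with $\theta_\RV$-compatible data. Then invoke \cref{adjoin-limit}(1) to extend $f$.

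The main obstacle is Case 3: \cref{dependent-defect} is crucial in ensuring that the pseudo-Cauchy sequence is of $\sigma$-transcendental type, so the resulting extension $f(A)\langle b\rangle \leq \model{L}$ is also $\sigma$-transcendental and hence automatically $\sigma$-separable. Without this, the transported sequence could be of $\sigma$-algebraic type, whose pseudolimits in $\model{L}$ might yield an immediate $\sigma$-algebraic extension of $f(A)$ not matching the type of $a$ over $A$, breaking the $\Lsep$-embedding requirement. A secondary subtlety, handled in the preliminary reductions, is that the canonical extensions ($\smax{A}$, the residually inversive hull, and the relative $\sigma$-separably $\sigma$-algebraic closure) must embed compatibly on both the $\model{K}$ and $\model{L}$ sides; this is secured by the uniqueness in \cref{separable-kaplansky} and the universal property in \cref{inversive-lift}(4).
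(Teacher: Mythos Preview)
The overall architecture is right, but there are two genuine gaps.

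The most serious is in Case 3. Your claim that $\extnot{f(A)}{b} \leq \model{L}$ is ``automatically $\sigma$-separable'' because $b$ is $\sigma$-transcendental over $f(A)$ is false: what you need is that $\extnot{f(A)}{b}$ be linearly disjoint from $\sigma(L)$ over $\sigma(\extnot{f(A)}{b})$, and this fails, for instance, whenever $b \in \sigma(L)$ (take $b = \sigma(c)$ with $c$ $\sigma$-transcendental over $f(A)$; then $b$ is $\sigma$-transcendental over $f(A)$, yet $1,b$ are $\sigma(L)$-linearly dependent while $\sigma(\extnot{f(A)}{b})$-independent). Nothing in your partial type rules this out. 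The paper's fix is the heart of the non-inversive argument: the set $\mathcal B$ of pseudolimits of $(f(a_\rho))_\rho$ in $L$ contains an open ball (the radii are bounded in $\valuegroup{L}$ by saturation), while the set $\Lambda \subseteq L$ of elements transcendental over $f(A) \otimes_{\sigma(f(A))} \sigma(L)$ is nonempty by \cite[Lemma~4.51]{dor2023contracting} and stable under multiplication by $\sigma(L)^\times$, hence dense by deep $\sigma$-ramification of $L$. One then picks $b \in \Lambda \cap \mathcal B$, and \cite[Proposition~4.50(2)]{dor2023contracting} yields the required $\sigma$-separability of $\extnot{f(A)}{b} \subseteq L$.

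Second, your preliminary reduction to the relative $\sigma$-separably $\sigma$-algebraic closure $\widetilde{A}$ inside $K$ destroys countability: the zero set of $\sigma(X)-X$ alone is the fixed field of $\sigma$ in $K$, which in an $\aleph_1$-saturated model is uncountable, so $\widetilde A$ is uncountable and $\aleph_1$-saturation of $L$ no longer suffices. Your conclusion that every $a \in K \setminus A$ is $\sigma$-transcendental is therefore unavailable. The paper avoids this: in Step 4 it applies \cref{maclane} to the finitely generated $\primary$-extension $A \subseteq A'$ to extract a finite $\sigma$-separating transcendence basis, handles each transcendental via the density argument above, and absorbs the residual $\sigma$-separably $\sigma$-algebraic part by passing to $\smax{(\cdot)}$. (A smaller point: in Case 1 your appeal to \cref{generics-rv} is circular, as no element with $\rv$ equal to $\rv(a)$ has been shown generic; the paper instead uses \cref{generics-exist} to manufacture a fresh generic $b \in \core{K}$ with the desired value, then picks $b' \in \core{L}$ with $\rv(b') = \theta_{\RV}(\rv(b))$ and applies \cref{generics-rv} on the $L$ side.)
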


\begin{figure*}
\begin{tikzcd}
	{(\RV_K,\valuegroup{K})} && {(\RV_L,\valuegroup{L})} \\
	{{\color{red}(\RVsort(A'),\valuesort(A'))}} \\
	& {(\RV_A,\valuegroup{A})} \\
	K && L \\
	{\color{red} A'} \\
	& {\extnot{A}{a}} \\
	& A
	\arrow["{(\theta_\RV,\theta_\Gamma)}"{description}, from=1-1, to=1-3]
	\arrow[color={rgb,255:red,214;green,92;blue,92}, no head, from=2-1, to=1-1]
	\arrow["{(g_\RVsort\vert_{\RVsort(\model{A'})},g_\valuesort\vert_{\valuesort(\model{A'})})}"{description}, color={rgb,255:red,214;green,92;blue,92}, hook, from=2-1, to=1-3]
	\arrow[no head, from=3-2, to=1-1]
	\arrow["{(f_\RVsort, f_\valuesort)}"{description}, hook, from=3-2, to=1-3]
    \arrow[color={rgb,255:red,214;green,92;blue,92}, no head, from=3-2, to=2-1]
	\arrow[color={rgb,255:red,214;green,92;blue,92}, no head, from=5-1, to=4-1]
	\arrow["g"{description}, color={rgb,255:red,214;green,92;blue,92}, hook, from=5-1, to=4-3]
	\arrow[color={rgb,255:red,214;green,92;blue,92}, no head, from=6-2, to=5-1]
	\arrow["f"{description}, hook, from=7-2, to=4-3]
	\arrow[no head, from=7-2, to=6-2]
\end{tikzcd}
\caption{The red elements of the diagram are the ones produced by \cref{embedding-theorem}.}
\end{figure*}
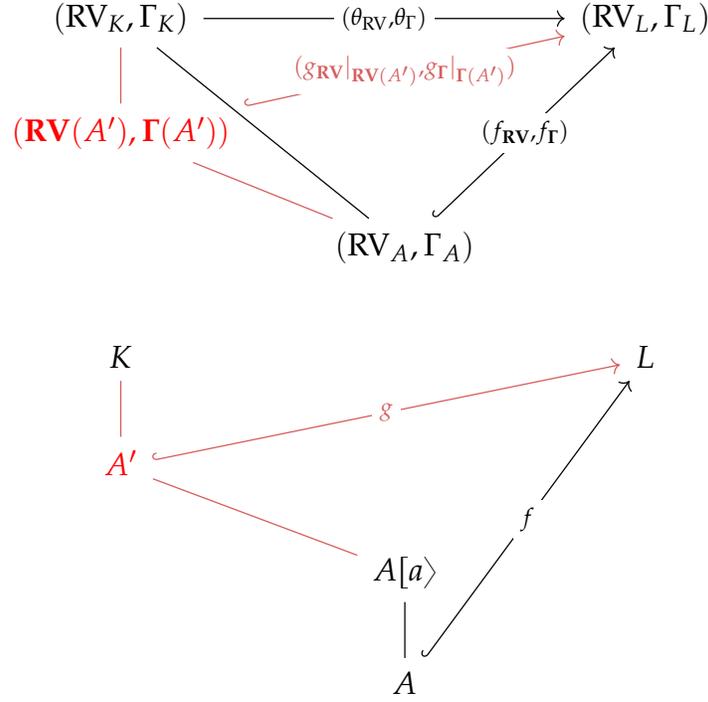

\begin{remark}
    Note that this does not immediately yield that we can lift $\theta_\RV$ and $\theta_\Gamma$ to an embedding of $\model{K}$ itself in $\model{L}$; for this, $\model{K}$ would need to be of size $\aleph_1$, i.e. saturated in its own cardinality, which could be achieved for example by assuming the Continuum Hypothesis (this would anyway be a safe assumption, see \cite{halevi2023saturated}).
\end{remark}

\begin{remark}
    The embedding lemma is phrased in a bit of an odd way, to allow space for both relative quantifier elimination and \cref{existential-ake}. As explained in the previous remark, this falls short of a purely algebraic embedding lemma due to saturation issues, essentially injected into the statement by the use of generics. One should think of applying this to an asymmetric back-and-forth scenario, where $\model{L}$ is saturated in the cardinality of $\model{K}$, $f_{\RVsort}$ and $f_{\valuesort}$ are elementary, and thus they then be extended by saturation and elementarity to $\theta_\RV$ and $\theta_\Gamma$ in a compatible way. The embedding lemma then gives a recipe to extend $f$ to any element of $K$ which is not in $A$ already, possibly extending the residue field along the way. In fact, this remark is a spoiler: this is precisely how the theorem will be used in \cref{relative-qe}.
\end{remark}

\begin{proof}[Proof of \cref{embedding-theorem}]
    We proceed in steps, extending $f$ recursively and checking at each stage that we still obtain an embedding as requested.

    \par \textbf{Warning.} At each stage, we rename the newly obtained intermediate substructure as $\model{A}$ again, and the extended embedding as $f$ again, to save on notation. From the moment we can assume that we are working with subfields onwards, we check that $\model{A} \leq \model{K}$ by checking that $K$ is $\sigma$-separable over $A$; similarly, we check that a valued difference field embedding $f \colon A \to L$ is an embedding in $\Lsep$ by checking that $L$ is $\sigma$-separable over $f(A)$. We also point out that, in this setting, the towers of extension $\core{K} \subseteq K \subseteq \inv{K}$ and $\core{L} \subseteq L \subseteq \inv{L}$ are all dense, and in particular immediate. We make use of this fact several times without further mention in the proof.

    \begin{itemize}

        \item[\text{\textsc{step 0.}}] \textit{We may assume that $A$ is a field.} \\
        By \cref{lambda-subfield}, $f$ extends uniquely to $\Frac(A)$. \hfill $\blacksquare^0$

        \item[\text{\textsc{step 1.}}] \textit{We may assume that $A$ satisfies $\primary$.} \\
        By \cref{primary-closure}, since $K$ is $\sigma$-separable over $A$ and satisfies $\primary$, $A$ must be a model of $\primary$ as well. \hfill $\blacksquare^1$

    \end{itemize}

    \par We are now working with a valued difference subfield $(A,\val,\sigma)$. Let $A'$ be the smallest $\Lvfe$-substructure of $\model{A}$ containing $A$ and $a$. Note that, a priori, $A'$ will not be equal to $\extnot{A}{a}$, unless the extension $\extnot{A}{a} \subseteq K$ happens to $\sigma$-separable.

    \begin{itemize}
        \item[\text{\textsc{step 2.}}] \textit{We may assume that for every $z \in A'$, $\val(z) \in \valuegroup{A}$.}\\
        Let $\gamma \in \valuegroup{A'} \setminus \valuegroup{A}$, and let $b \in \core{K}$ be generic over $A$ with $\val(b) = \gamma$ (such $b$ exists by \cref{generics-exist}). Let $\alpha = \rv(b)$ and $\alpha' = \theta_\RV(\alpha) \in \RV_L$. We now let $b' \in \core{L}$ be such that $\rv(b') = \alpha'$ (in particular, $\val(b') = \theta_\Gamma(\val(b))$). Then, by \cref{generics-rv}, $b'$ is still generic over $f(A)$, thus \cref{generics-exist} ensures an isomorphism $\ext{A}{b} \to \ext{f(A)}{b'}$, extending $f$ and $b \mapsto b'$, where $\ext{f(A)}{b'} \subseteq L$ is still $\sigma$-separable, and thus this gives rise to an $\Lsep$-embedding of $\ext{A}{b}$ into $L$ over $A$. Moreover, since $\ext{A}{b} \subseteq K$ and $\ext{A}{b'} \subseteq L$ are $\sigma$-separable (as $b \in \core{K}$ and $b' \in\core{L}$), using \cref{primary-equivalence} and since both $(K,\sigma)$ and $(L,\sigma)$ are models of $\primary$, one gets automatically that $\ext{A}{b}$ and $\ext{A}{b'}$ also do. \\
         We can then repeat this procedure $\omega$-many times to obtain a new countable substructure $A_1$ such that whenever $z \in A'$, we have that $\val(z) \in \valuegroup{A_1}$. Note that, a priori, we do not have that whenever $z \in \extnot{A_1}{a}$, $\val(z) \in \valuegroup{A_1}$. Thus, we repeat this procedure countably many times to produce a chain of extensions $A = A_0 \leq A_1 \leq A_2 \leq \cdots$ such that their union, which is again countable and we call $A_\infty$, now satisfies that whenever $z \in \extnot{A_\infty}{a}$, $\val(z) \in \valuegroup{A_\infty}$. The procedure might have made $\residue{A_\infty}$ non-inversive, so we apply \cref{inversive-lift} to replace $A_\infty$ with a residually inversive hull. We rename $A_\infty$ as $A$. \hfill $\blacksquare^2$
         
         \item[\text{\textsc{step 3.}}] \textit{We may assume that for every $z \in A'$, $\res(z) \in \residue{A}$, and that $(\residue{A},\Res{\sigma})$ is linearly difference closed.}\\ 
         Identify $\residue{A} \subseteq \RV_A$, $\residue{L} \subseteq \RV_L$, and $\residue{K} \subseteq \RV_K$. Let $\alpha \in \residue{A'} \setminus \residue{A}$, and let $\alpha' = \theta_\RV(\alpha) \in \RV_L$. We distinguish two sub-cases.\\
         \textit{Subcase 3.a:} $\alpha$ is $\sigma$-transcendental over $\residue{A}$.\\
         Then, the same is true for $\alpha'$, so if we choose any $b \in \core{K}$ with $\rv(b) = \res(b) = \alpha$ and any $b' \in \core{L}$ with $\rv(b') = \res(b') = \alpha'$, by \cref{adjoin-transc} the map $b \mapsto b'$ extends $f$ to an $\Lsep$-embedding (since both $\ext{A}{b} \subseteq K$ and $\ext{A}{b'} \subseteq L$ are $\sigma$-separable, by \cref{inversive-relative-transc}) $f\colon \ext{A}{b} \to \model{L}$ over $A$.\\
         \textit{Subcase 3.b:} $\alpha$ is $\sigma$-algebraic over $\residue{A}$.\\
         Then, we let $h(X) \in \polring{\residue{A}}$ be of minimal complexity such that $h(\alpha) = 0$. As $(\residue{A},\Res{\sigma})$ is inversive, we have that $h' \neq 0$ and, moreover, that whenever $h_{J} \neq 0$, $h_{J}(\alpha) \neq 0$. We now let $c \in \O_{K}$ be such that $\res(c) = \alpha$, and let $g(X) \in \polring{\O_A}$ be an exact lift of $h$. Then, $g' \neq 0$, $\val(g_{J}(c)) = 0$ for all $J$ such that $g_{J} \neq 0$, and $\val(g(c)) > 0$. As $K$ is weakly $\sigma$-henselian, we find $b \in K$ with $\res(b) = \alpha$ and $g(b) = 0$. Analogously, on the other side, we find $b' \in L$ with $\res(b') = \alpha'$ and $g(b') = 0$.\\
         By \cref{adjoin-alg}, this gives rise to an isomorphism $\extnot{A}{b} \to \extnot{f(A)}{b'}$, given by $b \mapsto b'$, which extends to an isomorphism $\FE{\extnot{A}{b}} \to \FE{\extnot{f(A)}{b'}}$. As $A \subseteq \extnot{A}{b}$ and $f(A) \subseteq \extnot{f(A)}{b'}$ are $\sigma$-separably $\sigma$-algebraic extensions, the same holds for $A \subseteq \FE{\extnot{A}{b}}$ and $f(A) \subseteq \FE{\extnot{f(A)}{b'}}$, and thus by the tower properties of $\sigma$-separability, $\FE{\extnot{A}{b}} \subseteq K$ and $\FE{\extnot{f(A)}{b'}} \subseteq L$ are $\sigma$-separable extensions. In particular, we get an embedding $f \colon \FE{\extnot{A}{b}} \to \model{L}$. We replace $\FE{\extnot{A}{b}}$ with a residually inversive hull by \cref{inversive-lift}.\\
         We can then repeat this procedure $\omega$-many times to obtain a new countable substructure $A_1$ such that whenever $z \in \O_{A'}$, we have that $\res(z) \in \residue{A_1}$.
         Since $\residue{K}$ is linearly difference closed, for any linear difference equation over $\residue{A}$ we find a solution in $\residue{K}$. By repeating Subcase 3.b, we obtain a new countable substructure $\widetilde{A_1}$ such that whenever $z \in \O_{A'}$, $\res(z) \in \residue{\widetilde{A_1}}$, and furthermore $(\residue{\widetilde{A_1}},\Res{\sigma})$ is linearly difference closed. Note that, as before, we do not necessarily have that whenever $z \in \O_{\extnot{\widetilde{A_1}}{a}}$, $\res(z) \in \residue{\widetilde{A_1}}$. Thus, we repeat this procedure countably many times to produce a chain of extensions $A = A_0 \leq \widetilde{A_1} \leq \widetilde{A_2} \leq \cdots$ such that their union, which is again countable and we call $\widetilde{A_\infty}$, now satisfies that whenever $z \in \O_{\extnot{\widetilde{A_\infty}}{a}}$, $\res(z) \in \residue{\widetilde{A_\infty}}$, and $(\residue{\widetilde{A_\infty}},\Res{\sigma})$ is linearly difference closed. Finally, we rename $\widetilde{A_\infty}$ as $A$. \hfill $\blacksquare^3$
    \end{itemize}
    \par What remains is a valued difference field extension $A \leq A'$ which is $\sigma$-separable and immediate.
    \begin{itemize}
        \item[\text{\textsc{step 4.}}] \textit{The immediate case.}\\
        Replacing $A$ with $\smax{A}$, we may assume that $A$ is $\sigma$-separably $\sigma$-algebraically maximal. By saturation, we may assume that $A'$ is finitely generated over $A$ as a model of $\primary$. Then, by \cref{maclane}, $A'$ is finitely $\sigma$-separably generated, so there is a finite $\sigma$-transcendence basis $\mathcal X \subseteq A'$ such that $\extnot{A}{\mathcal X} \subseteq A'$ is $\sigma$-separably $\sigma$-algebraic. Enumerate $\mathcal X = \{x_1, \cdots x_n\}$. Note that $\extnot{A}{x_1} \subseteq A'$ is $\sigma$-separable by \cref{one-of-them}. We now argue as in \cite[Theorem 7.1]{dor2023contracting} to find an appropriate $y_1 \in \model{L}$.
        \par Using \cref{dependent-defect}, we can find a pseudo-Cauchy sequence $(a_\rho)_\rho \subseteq A$ without pseudolimit in $A$ which is of $\sigma$-transcendental type, and such that $a_\rho \pseudoconverges x_1$. For the sake of the argument, denote by $\mathcal{B} \subseteq L$ the set of pseudolimits of $(a_\rho)_\rho$ in $L$. By saturation, $\mathcal{B}$ is non-empty, and the sequence $(\gamma_\rho)_\rho \subseteq \valuegroup{A} \subseteq \valuegroup{L}$ of radii of the sequence is not cofinal. Pick $\delta \in \valuegroup{L}$ with $\delta > \gamma_\rho$ for every $\rho$. Moreover, pick some $b \in \mathcal{B}$: then $B_\delta(b) \subseteq \mathcal{B}$, thus $\mathcal{B}$ contains an open ball. On the other hand, consider the set $\Lambda \subseteq L$ of elements that are transcendental over $A \otimes_{\sigma(A)} \sigma(L)$: by \cite[Lemma 4.51]{dor2023contracting}, $\Lambda$ is non-empty, and since $\sigma(L)\Lambda \subseteq \Lambda$, it must be dense in $L$. Thus there is $y_1 \in \Lambda \cap \mathcal{B}$. Then, the isomorphism $\extnot{A}{x_1} \to \extnot{A}{y_1}$ over $A$ gives the required embedding of $\extnot{A}{x_1}$ into $L$, with $L$ $\sigma$-separable over $\extnot{A}{y_1}$ by \cite[Proposition 4.50(2)]{dor2023contracting}.
        \par We can now replace $\extnot{A}{x_1}$ with $A_1 \coloneqq \smax{\extnot{A}{x_1}}$; the elements $x_2, \cdots x_n$ remain $\sigma$-algebraically independent over $A_1$ (as this is a $\sigma$-separably $\sigma$-algebraic extension of $\extnot{A}{x_1}$), and thus we can restart the argument by replacing $A$ with $A_1$. We have then produced a tower $A \leq A_1 \leq \cdots \leq A_n \leq K$ of $\sigma$-separable extensions, where each of the $A_i$s is embedded in $L$ so that $L$ is $\sigma$-separable over the image, and such that $A' \subseteq A_1 \cup \cdots \cup A_n$.
    \end{itemize}
    \par This extends $f$ to the required embedding $g$.
\end{proof}

\par We can now deduce the first and most crucial consequence of \cref{embedding-theorem}, namely relative quantifier elimination.

\begin{theorem}\label{relative-qe}
    $\WTHzero$ eliminates quantifiers resplendently relatively to $\RVsort$ and $\valuesort$.
\end{theorem}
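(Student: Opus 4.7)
The plan is to deduce relative quantifier elimination from Theorem \ref{embedding-theorem} by the standard back-and-forth test, and then observe that the argument transfers verbatim to any enrichment of $\RVsort$ and $\valuesort$, yielding resplendency.

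By the standard criterion for relative QE, it suffices to establish the following back-and-forth property: for any $\aleph_1$-saturated $\model{K}, \model{L} \vDash \WTHzero$, any countable $\Lsep$-substructure $\model{A} \leq \model{K}$, any $\Lsep$-embedding $f \colon \model{A} \to \model{L}$, and any partial $\Lsorts$-elementary maps $\theta_\RV \colon \RV_K \to \RV_L$ and $\theta_\Gamma \colon \valuegroup{K} \to \valuegroup{L}$ extending $f_\RVsort$ and $f_\valuesort$, one can extend $f$ to include any prescribed $a \in K$ (and symmetrically any $a \in L$) while preserving the restrictions of $\theta_\RV, \theta_\Gamma$ on the RV and $\Gamma$ parts. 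The key point is that $\Lsorts$-elementarity of $\theta_\RV, \theta_\Gamma$ automatically entails their compatibility with $\val_\rv$, since this symbol belongs to $\Lsorts$; thus the hypotheses of the embedding lemma are satisfied and it directly provides the required extension for any $a \in K$. For $a \in L$, apply Theorem \ref{embedding-theorem} to the inverse maps $f^{-1}, \theta_\RV^{-1}, \theta_\Gamma^{-1}$, swapping the roles of $\model{K}$ and $\model{L}$.

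For the resplendent part, observe that Theorem \ref{embedding-theorem} interacts with the $\Lsorts$-structure on $\RVsort$ and $\valuesort$ only through the compatibility of $\theta_\RV, \theta_\Gamma$ with $\val_\rv$; no other $\Lsorts$-symbol appears in its proof, and the extensions of $f$ it produces never alter the action on the existing RV and $\Gamma$ data. Consequently, enriching $\Lsorts$ by arbitrary new symbols on these two sorts preserves both the hypotheses and the conclusion of the embedding lemma (with $\theta_\RV, \theta_\Gamma$ now required to be embeddings also for the new symbols, which the standard test still guarantees via elementarity in the enriched language), so the same back-and-forth goes through unchanged. The main obstacle is already behind us --- it was the embedding lemma itself; the present theorem is a formal consequence, with the critical design choice being that Theorem \ref{embedding-theorem} is stated in terms of \emph{compatible embeddings} rather than $\Lsorts$-elementary ones.
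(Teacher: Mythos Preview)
Your approach is the same as the paper's—deduce relative QE from the embedding lemma via an embedding test, with resplendency coming for free because the lemma only uses $\val_{\rv}$-compatibility—but there is a technical slip in the saturation bookkeeping. The embedding lemma requires $\theta_\RV,\theta_\Gamma$ to be defined on \emph{all} of $\RV_K,\valuegroup{K}$. With $\model{K}$ assumed $\aleph_1$-saturated these sets are uncountable, so $\aleph_1$-saturation of $\model{L}$ alone does not let you extend the partial elementary maps $f_\RVsort,f_\valuesort$ (whose domains are the countable $\RVsort(\model{A}),\valuesort(\model{A})$) to total embeddings $\RV_K\hookrightarrow\RV_L$, $\valuegroup{K}\hookrightarrow\valuegroup{L}$. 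The paper handles this by working in the Morleyization and using the \emph{asymmetric} test with $\model{L}$ taken $\lvert K\rvert^+$-saturated; then $f_\RVsort,f_\valuesort$ are elementary by construction and extend to the required total compatible $\theta_\RV,\theta_\Gamma$ by saturation.

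The same issue breaks your symmetric ``back'' step: $\theta_\RV^{-1}$ is only defined on the image $\theta_\RV(\RV_K)\subsetneq\RV_L$, so you cannot feed it to the embedding lemma with the roles of $\model{K}$ and $\model{L}$ swapped. Fortunately the back direction is not needed—the one-sided embedding criterion already yields QE—so the repair is simply to drop that sentence and take $\model{L}$ to be $\lvert K\rvert^+$-saturated, exactly as the paper does.
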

\begin{proof}
    Work in the Morleyization of $\WTHzero$ with respect to the sorts $\RVsort$ and $\valuesort$. Let $\model{K}$ and $\model{L}$ be two models, where $\model{K}$ is $\aleph_1$-saturated and $\model{L}$ is $\vert K \vert^+$-saturated. Let $\model{A} \leq \model{K}$ be a countable substructure, and let $f \colon \model{A} \hookrightarrow \model{L}$ be an embedding. Since we are working in the Morleyization, $f_{\RVsort}$ and $f_{\valuesort}$ are elementary, and thus by saturation we can extend them to compatible $\theta_{\RV} \colon \RV_K \hookrightarrow \RV_L$ and $\theta_{\Gamma} \colon \valuegroup{K} \hookrightarrow \valuegroup{L}$. Now, for every $a \in K$, by \cref{embedding-theorem} we find an embedding $g \colon \extnot{A'}{a} \hookrightarrow \model{L}$ extending $f$ such that $g_{\valuesort} = \theta_\Gamma\vert_{\valuesort(\extnot{A}{a})}$, and $g_{\RVsort} = \theta_{\RV}\vert_{\RVsort(\extnot{A}{a})}$.
\end{proof}

\begin{remark}
    In particular, one can work in the reduct of $\Lsep$ where we only consider the sorts $\mainsort$ and $\RVsort$, see \cref{interpret-value}. Then, $\WTHzero$ (seen in this reduct) eliminates quantifiers resplendently down to $\RVsort$.
\end{remark}

\runin{Angular components} Given a valued field $(K,\val)$, we say that a multiplicative group homomorphism $\ac \colon K^\times \to \residue{K}^\times$ is an \emph{angular component} if for every $u \in \O_K^\times$, $\ac(u) = \res(u)$. If $\sigma \in \End(K,\val)$, we say that $\ac$ is \emph{$\sigma$-equivariant} if $\Res{\sigma} \circ \ac = \ac \circ \; \sigma$. In the presence of a $\sigma$-equivariant angular component, we can construct a $\sigma$-equivariant section $s \colon \RV_K^\times \to \residue{K}^\times$, thus the short exact sequence of $\mathbb{Z}[\Val{\sigma}]$-modules
\[
    1 \to \residue{K}^\times \to \RV_K^\times \to \valuegroup{K} \to 0
\]
splits and $\RV_K^\times \cong \residue{K}^\times \times \valuegroup{K}$ along the map $\rv(a) \mapsto (\ac(a),\val(a))$. This allows us to transfer relative quantifier elimination from $\Lsep$ to a more classical three-sorted language with $\residue{K}$ and $\Gamma$, at the cost of adding an angular component.

\begin{definition}
    We let $\Lsepac$ be the three-sorted language with sorts:
    \begin{enumerate}
        \item $\mainsort$, with language $\LKsep$ (see \cref{lksep}),
        \item $\residuesort$, with language $\Lringsigma = \{+,\cdot,-,0,1,\Res{\sigma}\}$,
        \item $\valuesort$, with language $\Loagsigma \coloneqq  \{+,\leq,0,\infty,\Val{\sigma}\}$,
    \end{enumerate}
    and connecting functions $\ac\colon \mainsort \to \residuesort$ and $\val\colon \mainsort \to \valuesort$. We let $\WTHzeroac$ be the theory of non-inversive, equicharacteristic zero, weakly $\sigma$-henselian valued difference fields that are models of $\primary$, endowed with a $\sigma$-equivariant angular component.
\end{definition}

We now sketch how to deduce relative quantifier elimination for $\WTHzeroac$ in $\Lsepac$ from relative quantifier elimination  for $\WTHzero$ in $\Lsep$.
\begin{enumerate}
    \item Consider the expansion $\Lsep \subseteq \Lsepenrich$ where the sort $\RVsort$ is enriched with a map $\ac_{\rv}\colon \RVsort \to \RVsort$, to be interpreted as the map induced by $\ac$ along the diagram
    \[\begin{tikzcd}
    	{\mainsort^\times} && {\residuesort^\times} \\
    	& \RVsort
    	\arrow["\ac", from=1-1, to=1-3]
    	\arrow["\rv"', from=1-1, to=2-2]
    	\arrow["{\ac_{\rv}}"', from=2-2, to=1-3]
    \end{tikzcd}\]
    Then, resplendency of the relative quantifier elimination result means that the rephrasing $\WTHzero^+$ of $\WTHzero$ in $\Lsepenrich$ still eliminates quantifiers relatively to $\RVsort$ and $\valuesort$.
    \item We now observe that, in models of $\WTHzero^+$, $\residuesort^\times$ and $\ac$ are definable in $\Lsepenrich$. Namely, $y \in \residuesort^\times$ if and only if $\val_{\rv}(y) = 0$, and $\ac(x) = z$ if and only if $\ac_{\rv}(\rv(x)) = z$. In the other direction, $\RVsort$ and $\ac_{\rv}$ are $\mainsort$-quantifier-free interpretable in $\Lsepac$. Indeed, in the presence of $\ac$ one has the identification $\RVsort^\times = \residuesort^\times \times \valuesort$, where $\ac_{\rv}$ is the projection on the first coordinate. Thus, one can transfer relative quantifier elimination between these two languages.
\end{enumerate}

\begin{theorem}\label{relative-qe-ac}
    $\WTHzeroac$ eliminates quantifiers resplendently relatively to $\residuesort$ and $\valuesort$.
\end{theorem}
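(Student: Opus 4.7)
The plan is to reduce to the already-established \cref{relative-qe} by passing through an intermediate enrichment of $\Lsep$ and invoking resplendency; the two-step strategy is indicated in the paragraph preceding the statement. First I would consider the expansion $\Lsep \subseteq \Lsepenrich$ obtained by adjoining to $\RVsort$ a unary function $\ac_{\rv} \colon \RVsort \to \RVsort$, and let $\WTHzero^+$ be $\WTHzero$ together with axioms declaring that $\ac_{\rv}$ is a $\Rv{\sigma}$-equivariant multiplicative idempotent whose image is $\{y \in \RVsort \mid \val_{\rv}(y) = 0\}$ and which restricts to the identity on this image. By resplendency of \cref{relative-qe}, $\WTHzero^+$ still eliminates quantifiers relatively to $\RVsort$ and $\valuesort$ in $\Lsepenrich$.

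Next I would set up a quantifier-free bi-interpretation over $\mainsort$ between models of $\WTHzeroac$ in $\Lsepac$ and models of $\WTHzero^+$ in $\Lsepenrich$. From $\Lsepac$ to $\Lsepenrich$, define $\ac_{\rv}(\rv(a)) \coloneqq \ac(a)$, viewing $\residuesort^\times$ inside $\RVsort^\times$ via the section afforded by $\ac$; this is well-defined because $\ac$ factors through $\rv$ (its kernel contains $1+\m$). From $\Lsepenrich$ to $\Lsepac$, observe that $\residuesort^\times$ is quantifier-free definable as $\{y \in \RVsort^\times \mid \val_{\rv}(y) = 0\}$ and $\ac$ is recovered as $\ac_{\rv} \circ \rv$, yielding the identification $\RVsort^\times \cong \residuesort^\times \times \valuesort$ along $\alpha \mapsto (\ac_{\rv}(\alpha),\val_{\rv}(\alpha))$.

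Finally, for the transfer, take an $\Lsepac$-formula $\varphi(\overline x)$, translate it into an $\Lsepenrich$-formula using the above interpretation, apply the $(\RVsort,\valuesort)$-relative quantifier elimination of $\WTHzero^+$, and translate back: each surviving quantifier $\exists y \in \RVsort$ can be rewritten, via the splitting, as a disjunction between $y = 0$ and an existential over a pair $(u,\gamma) \in \residuesort^\times \times \valuesort$, producing an $\Lsepac$-formula whose only quantifiers are over $\residuesort$ and $\valuesort$. Resplendency carries over because any further enrichment of $\residuesort$ and $\valuesort$ in $\Lsepac$ pulls back, via the identification $\RVsort^\times \cong \residuesort^\times \times \valuesort$, to an enrichment of $\RVsort$ and $\valuesort$ in $\Lsepenrich$, to which resplendency of \cref{relative-qe} applies verbatim. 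The main obstacle is not conceptual but bookkeeping: one must check that the induced product decomposition respects $\Rv{\sigma}$, which is precisely where $\sigma$-equivariance of $\ac$ is crucially used, and that the zero element of $\RVsort$ is handled cleanly in the translations.
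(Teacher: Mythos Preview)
Your proposal is correct and follows essentially the same two-step strategy as the paper: first enrich $\RVsort$ with $\ac_{\rv}$ and invoke resplendency of \cref{relative-qe}, then transfer via the $\mainsort$-quantifier-free bi-interpretation afforded by the splitting $\RVsort^\times \cong \residuesort^\times \times \valuesort$. Your added remarks on $\sigma$-equivariance of $\ac$ and the handling of $0 \in \RVsort$ are welcome bookkeeping details that the paper leaves implicit, but they do not alter the route.
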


\begin{theorem}
    Let $\model{K}$ be a model of $\WTHzero$. Then $(\RV_K,\Rv{\sigma})$ and $(\valuegroup{K},\Val{\sigma})$ are stably embedded, with induced structures given respectively by $\L_{\RVsort}$ and $\Loagsigma$. Analogously, if $\model{K}$ is a model of $\WTHzeroac$, then $(\residue{K},\Res{\sigma})$ and $(\valuegroup{K},\Val{\sigma})$ are stably embedded, with induced structures given respectively by $\Lringsigma$ and $\Loagsigma$.
\end{theorem}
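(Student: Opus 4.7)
The plan is to derive both statements directly from the resplendent relative quantifier elimination established in Theorems \ref{relative-qe} and \ref{relative-qe-ac}. Let $\varphi(\bar x, \bar y)$ be an $\Lsep$-formula with $\bar x$ of sort $\RVsort$ (or $\valuesort$) and $\bar y$ of sort $\mainsort$, and fix $\bar c \in \mainsort(\model{K})$. By relative QE, $\varphi(\bar x, \bar c)$ is $\WTHzero$-equivalent to a formula $\psi(\bar x, \bar c)$ whose quantifiers range only over $\RVsort$ and $\valuesort$. The only function symbols mapping out of $\mainsort$ are $\rv\colon \mainsort \to \RVsort$ and $\val = \val_{\rv} \circ \rv$, so every $\mainsort$-term $t(\bar y)$ occurring in $\psi$ is immediately wrapped in one of these; substituting $\bar c$ therefore converts $\psi(\bar x, \bar c)$ into an $\Lsorts$-formula $\psi'(\bar x, \bar\alpha, \bar\gamma)$ with parameters $\bar\alpha \in \RV_K$ and $\bar\gamma \in \valuegroup{K}$.

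For the $\RVsort$ case: by \cref{interpret-value}, $\valuesort$ is $\emptyset$-interpretable in $\RVsort$, so the $\valuesort$-parameters $\bar\gamma$ can be absorbed as $\RVsort$-parameters, yielding an $\L_{\RVsort}$-formula over $\RV_K$. Applying the same argument with empty parameters shows that the induced structure on $\RV_K$ is exactly $\L_{\RVsort}$-definable. The analogous reduction in $\Lsepac$ yields stable embeddedness of $\residue{K}$ with induced structure $\Lringsigma$: here the $\sigma$-equivariant angular component provides a canonical splitting $\RV_K^\times \cong \residue{K}^\times \times \valuegroup{K}$ along $\rv(a) \mapsto (\ac(a),\val(a))$, which makes the argument fully symmetric between $\residuesort$ and $\valuesort$.

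For the $\valuesort$ case: the same reduction gives an $\Lsorts$-formula with parameters in $\RV_K \cup \valuegroup{K}$, which immediately establishes stable embeddedness. What remains is to show that the $\RV_K$-parameters (and any internal $\RVsort$-quantifications in $\psi'$) contribute nothing beyond $\Loagsigma$-definable sets of values. In $\WTHzeroac$ this is transparent: the angular-component splitting decouples $\RV_K^\times$ into independent $\residue{K}^\times$ and $\valuegroup{K}$ factors, so a formula with free variables only in $\valuesort$ reduces to one using only $\valuesort$-parameters and $\Loagsigma$-operations. In the $\Lsep$-setting without $\ac$, one reduces to the previous case by expanding a sufficiently saturated model of $\WTHzero$ by some $\sigma$-equivariant angular component and observing that the $\emptyset$-definable subsets of $\valuegroup{K}^n$ are insensitive to the choice of expansion.

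The main obstacle is precisely this last step: establishing that $\sigma$-equivariant angular components exist on saturated enough models of $\WTHzero$ (via lifting $\Res{\sigma}$ along a suitably chosen section) and that the induced $\Loagsigma$-structure on $\valuegroup{K}$ is independent of the choice of $\ac$. Everything else is a direct unwinding of the relative quantifier elimination theorems together with the interpretability of $\valuesort$ inside $\RVsort$.
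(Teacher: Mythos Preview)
The paper states this theorem without proof, immediately after \cref{relative-qe-ac}, treating it as a direct corollary of the resplendent relative quantifier elimination results (\cref{relative-qe} and \cref{relative-qe-ac}). Your proposal therefore supplies strictly more detail than the paper does, and your overall strategy—unwinding relative QE so that the only $\mainsort$-terms surviving are those fed through $\rv$ or $\val$, then absorbing the resulting parameters into the auxiliary sorts—is exactly the intended one. Your treatment of $\RV_K$, and of $\residue{K}$ in the $\ac$-language, is correct and standard.

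You also correctly isolate the one point that is \emph{not} purely formal: showing that the induced structure on $\valuegroup{K}$ in the $\Lsep$-setting is no richer than $\Loagsigma$. Relative QE only reduces a $\valuesort$-formula to an $\Lsorts$-formula with parameters in $\RV_K \cup \valuegroup{K}$, so eliminating the $\RVsort$-parameters and quantifiers is an orthogonality statement that does not fall out of \cref{relative-qe} alone. Your proposed route—pass to a saturated model, expand by a $\sigma$-equivariant angular component, and invoke the $\WTHzeroac$ case—is reasonable, but note that the paper never establishes the existence of $\sigma$-equivariant angular components (or $\sigma$-equivariant sections of $\val$) on saturated models; it only constructs $\sigma$-equivariant lifts of the residue field. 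So the step you flag as the main obstacle is genuinely not covered by the paper either, and completing it would require an additional argument beyond what is written.
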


\runin{Lifting the residue field} We now turn to an enriched version of $\Lsep$. In many applications, it becomes useful to see the residue difference field $(\residue{K},\Res{\sigma})$ as a difference subfield of the valued difference field $(K,v,\sigma)$. For example, this is successfully exploited in \cite{hils2024langweil}. First, we argue that such lifts exist.

\begin{lemma}
    Suppose $(K,v,\sigma)$ is equicharacteristic zero, weakly $\sigma$-henselian, and $\aleph_0$-saturated. Let $F \subseteq \residue{K}$ be an inversive difference subfield, and suppose there exists an embedding $\eta \colon F \hookrightarrow \O^{\times}_{\core{K}}$ such that $\res \circ \; \eta = \id$. Given any $\alpha \in \residue{K}$, there is an embedding $\widetilde{\eta} \colon \ext{F}{\alpha} \hookrightarrow \O^\times_{\core{K}}$, extending $\eta$ and such that $\res \circ \; \widetilde{\eta} = \id$.
\end{lemma}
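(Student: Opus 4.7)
The plan is to reduce everything to the strongly $\sigma$-henselian inversive difference field $\core{K}$ and then split into two cases according to whether $\alpha$ is $\sigma$-algebraic or $\sigma$-transcendental over $F$. By \cref{core-dense}, the $\aleph_0$-saturation hypothesis together with weak $\sigma$-henselianity ensure that $\core{K}$ is dense in $K$ and strongly $\sigma$-henselian. In particular $\residue{\core{K}} = \residue{K}$, so any lift of $\alpha$ may be chosen in $\O_{\core{K}}$, and since $\sigma$ is bijective on $\core{K}$ the iterates $\sigma^{\pm n}(\cdot)$ of such lifts stay inside $\core{K}$, so that $\ext{\eta(F)}{\cdot} \subseteq \core{K}$ is automatically meaningful.

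If $\alpha$ is $\sigma$-transcendental over $F$, I pick any lift $a \in \O_{\core{K}}$ with $\res(a) = \alpha$. Such an $a$ is automatically $\sigma$-transcendental over $\eta(F)$: any hypothetical nonzero $p(X) \in \polring{\eta(F)}$ with $p(a) = 0$ would, since $\eta(F) \subseteq \O_{\core{K}}^\times \cup \{0\}$, residually yield a nonzero $\res(p) \in \polring{F}$ vanishing at $\alpha$, contradicting the hypothesis on $\alpha$. By \cref{adjoin-transc} applied inside the inversive $\core{K}$, the assignment $\alpha \mapsto a$ then extends $\eta$ to a valued difference field embedding $\widetilde{\eta}\colon \ext{F}{\alpha} \to \ext{\eta(F)}{a} \subseteq \core{K}$, and the identity $\res \circ \widetilde{\eta} = \id$ is immediate from writing elements of $\ext{F}{\alpha}$ as rational expressions in $F$ and iterates of $\alpha$.

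If $\alpha$ is $\sigma$-algebraic over $F$, let $h \in \polring{F}$ be of minimal complexity with $h(\alpha) = 0$; inversiveness of $F$ combined with the shift operation of \cref{shift-reference} lets me assume $h' \neq 0$, and minimality then forces $h_J(\alpha) \neq 0$ for every $J$ with $h_J \neq 0$. Letting $g \in \polring{\eta(F)}$ be the exact lift of $h$ and $a_0 \in \O_{\core{K}}$ any lift of $\alpha$, the conditions $\val(g(a_0)) > 0$ and $\val(g_J(a_0)) = 0$ for all $J$ with $g_J \neq 0$ put $g$ in $\sigma$-henselian configuration at $a_0$, by the instructive remark following the definition of $\sigma$-henselian configuration. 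Strong $\sigma$-henselianity of $\core{K}$ then produces $b \in \core{K}$ with $g(b) = 0$ and $\res(b) = \alpha$, and \cref{adjoin-alg} extends $\eta$ via $\alpha \mapsto b$ to an embedding $\extnot{F}{\alpha} \to \core{K}$ that is the identity on residues. Inversiveness of $\core{K}$ and injectivity of $\Res{\sigma}$ in characteristic zero then extend this further to all of $\ext{F}{\alpha}$ by setting $\sigma^{-n}(\alpha) \mapsto \sigma^{-n}(b)$.

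The main subtlety will be securing $\sigma$-henselian configuration in the algebraic case: one must choose $\gamma$ so that $\Val{\sigma}^i(\gamma) = \val(g(a_0))$ for the correct index $i$ in condition (1), which is possible by surjectivity of $\Val{\sigma}$, while the fact that a valued field endomorphism sends $\m$ into $\m$ guarantees $L(\gamma) > 0$ for every nonzero multi-index $L$, thereby securing condition (2). After that the bookkeeping is straightforward: any nonzero $y \in \ext{F}{\alpha}$ satisfies $\res(\widetilde{\eta}(y)) = y \neq 0$, so $\widetilde{\eta}(y) \in \O_{\core{K}}^\times$ automatically, and the target of $\widetilde{\eta}$ is as required.
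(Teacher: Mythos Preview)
Your proof is correct and follows essentially the same route as the paper: both reduce to $\core{K}$ via \cref{core-dense}, split according to whether $\alpha$ is $\sigma$-algebraic over $F$, and in the algebraic case use the remark after the definition of $\sigma$-henselian configuration together with strong $\sigma$-henselianity of $\core{K}$ to locate an honest root $b$ with $\res(b)=\alpha$. The only cosmetic differences are that the paper phrases the case split as ``$\val(g(a))=0$ for all $g\in\polring{\eta(F)}$'' versus ``some $g$ has $\val(g(a))>0$'', and that your detour through $h'\neq 0$ is unnecessary (minimality over the inversive $F$ already forces this via the shift operation, and in any case strong $\sigma$-henselianity does not require it).
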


\[\begin{tikzcd}
	{\residue{K}} && {\core{K}} \\
	\\
	F
	\arrow["{\widetilde{\eta}}"', color={lightgray}, hook, from=1-1, to=1-3]
	\arrow["\res"', curve={height=18pt}, from=1-3, to=1-1]
	\arrow[hook, from=3-1, to=1-1]
	\arrow["\eta", hook, from=3-1, to=1-3]
\end{tikzcd}\]

\begin{proof}
    Let $a \in \O_{\core{K}}^\times$ be such that $\res(a) = \alpha$.
    If for all $g(X) \in \polring{\eta(F)}$ we have that $\val(g(a)) = 0$, then $a$ is $\sigma$-transcendental over $\eta(F)$ and $\alpha$ is $\sigma$-transcendental over $F$, so $\eta$ extends to a difference field isomorphism $\ext{F}{\alpha} \to \ext{\eta(F)}{a} \subseteq \O_{\core{K}}^\times$.
    Suppose now that there is some $g(X) \in \polring{\eta(F)}$ such that $\val(g(a)) > 0$, and suppose that it is of minimal complexity such. Then, $g(X)$ is $\sigma$-henselian at $a$, so by strong $\sigma$-henselianity of $\core{K}$ (\cref{core-dense}) we find $b \in \O_{\core{K}}$ such that $g(b) = 0$ and $\res(b) = \alpha$. In particular, then, $\eta$ extends to a difference field isomorphism $\ext{F}{\alpha} \to \ext{\eta(F)}{b} \subseteq \O_{\core{K}}^\times$.
\end{proof}

\begin{corollary}
    Suppose $(K,v,\sigma)$ is equicharacteristic zero, weakly $\sigma$-henselian, and $\aleph_0$-saturated. Then there is a difference subfield $F \subseteq \O_{\core{K}}^\times$ such that $\res\vert_{F} \colon F \to \residue{K}$ is a difference field isomorphism.
\end{corollary}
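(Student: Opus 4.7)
The plan is to iterate the preceding lemma via Zorn's lemma, starting from the prime subfield and building up a full lift of $\residue{K}$ into $\O_{\core{K}}^\times$. First I note that by \cref{hens-implies-inv}, weak $\sigma$-henselianity ensures that $\Res{\sigma}$ is surjective, so $\residue{K}$ is inversive; this is what allows us to form $\ext{F}{\alpha}$ inside $\residue{K}$ for any difference subfield $F$ and any $\alpha \in \residue{K}$.

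Consider the poset $\mathcal{S}$ of pairs $(F_0,\eta_0)$, where $F_0 \subseteq \residue{K}$ is an inversive difference subfield and $\eta_0 \colon F_0 \hookrightarrow \O_{\core{K}}^\times$ is a difference field embedding satisfying $\res \circ \eta_0 = \id_{F_0}$, ordered by extension. Then $\mathcal{S}$ is non-empty: the prime field $\mathbb{Q} \subseteq \residue{K}$ is fixed pointwise by $\sigma$ and $\sigma^{-1}$, hence inversive, and the canonical embedding $\mathbb{Q} \hookrightarrow \core{K}$ provides the required lift. The chain condition is immediate: given a chain, its union is still an inversive difference subfield of $\residue{K}$, and the embeddings glue to a difference field embedding into $\O_{\core{K}}^\times$ compatible with $\res$.

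By Zorn's lemma, let $(F,\eta)$ be a maximal element of $\mathcal{S}$. I claim $F = \residue{K}$. Suppose towards contradiction there exists $\alpha \in \residue{K} \setminus F$. The previous lemma, applied to the inversive difference subfield $F$ and the lift $\eta$, produces an extension $\widetilde{\eta} \colon \ext{F}{\alpha} \hookrightarrow \O_{\core{K}}^\times$ with $\res \circ \widetilde{\eta} = \id$. Since $F$ is inversive and $\alpha$ has all $\sigma$-iterates (positive and negative) inside $\residue{K}$, the field $\ext{F}{\alpha}$ is again an inversive difference subfield of $\residue{K}$, so $(\ext{F}{\alpha},\widetilde{\eta}) \in \mathcal{S}$, contradicting maximality. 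Hence $F = \residue{K}$, and $\eta(\residue{K}) \subseteq \O_{\core{K}}^\times$ is the required difference subfield with $\res$ restricting to an isomorphism onto $\residue{K}$.

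The only subtle point is the verification that $\ext{F}{\alpha}$ remains inversive at the inductive step; this is where the surjectivity of $\Res{\sigma}$ (which follows from weak $\sigma$-henselianity) is essential, since the notation $\ext{F}{\alpha}$ presumes that the negative $\sigma$-iterates of $\alpha$ are available. Everything else is a straightforward Zorn argument bootstrapping the preceding one-step extension lemma.
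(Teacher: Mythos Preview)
Your proposal is correct and is precisely the intended argument: the paper states the corollary without proof, leaving the standard Zorn's lemma bootstrap from the preceding one-step extension lemma implicit, and you have spelled it out accurately, including the relevant observation that $\Res{\sigma}$ is surjective so that $\ext{F}{\alpha}$ is well-defined and inversive inside $\residue{K}$.
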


We say that a map $s \colon \valuegroup{K} \to K$ is a \emph{section} of the valuation if for all $x \in \valuegroup{K}$, $\val(s(x)) = x$. We say that $s$ is \emph{$\sigma$-equivariant} if $s \circ \sigma = \Val{\sigma}^{-1} \circ s$.

\begin{remark}
    Note that if $s$ is a $\sigma$-equivariant section of the valuation, then 
    \[
        \ac(x) \coloneqq \res\left(\frac{x}{s(\val(x))}\right)
    \]
    is a $\sigma$-equivariant angular component.
\end{remark}

\begin{definition}
        We let $\Lseclift$ be the three-sorted language with sorts:
        \begin{enumerate}
            \item $\mainsort$, with language $\LKsep$ (see \cref{lksep}),
            \item $\residuesort$, with language $\Lringsigma = \{+,\cdot,-,0,1,\Res{\sigma}\}$,
            \item $\valuesort$, with language $\Loagsigma \coloneqq  \{+,\leq,0,\infty,\Val{\sigma}\}$,
        \end{enumerate}
        and connecting functions $s \colon \valuesort \to \mainsort$, $\val\colon \mainsort \to \valuesort$, and $\iota \colon \residuesort \to \mainsort$. We let $\WTHzerosl$ be the theory of non-inversive, equicharacteristic zero, weakly $\sigma$-henselian valued difference fields that are models of $\primary$, and where $s$ is interpreted as a $\sigma$-equivariant section of the valuation, and $\iota$ as an embedding of $\residuesort$ in $\mainsort$.
\end{definition}

Arguing as in \cite{kesting}, we can obtain the corresponding results in this language.

\begin{theorem}\label{relative-qe-sl}
    $\WTHzerosl$ eliminates quantifiers resplendently relatively to $\residuesort$ and $\valuesort$.
\end{theorem}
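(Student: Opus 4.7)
The plan is to adapt the argument for \cref{relative-qe-ac} to incorporate the section $s$ and the lift $\iota$. First, I would observe that any model $\model{K} \vDash \WTHzerosl$ canonically induces a $\sigma$-equivariant angular component $\ac(x) \coloneqq \res(x/s(\val(x)))$ (as noted in the remark preceding \cref{relative-qe-sl}), so the $\Lseclift$-structure is a genuine expansion of an $\Lsepac$-structure by the two connecting functions $s$ and $\iota$. In particular, an $\Lseclift$-substructure $\model{A} \leq \model{K}$ is precisely an $\Lsepac$-substructure that is additionally closed under $s$ and $\iota$, and an $\Lseclift$-embedding is an $\Lsepac$-embedding commuting with both maps.

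The core of the proof would be a variant of \cref{embedding-theorem} in $\Lseclift$: given an $\Lseclift$-embedding $f\colon \model{A} \hookrightarrow \model{L}$ with compatible extensions $\theta_\valuesort\colon \valuegroup{K} \hookrightarrow \valuegroup{L}$ and $\theta_\residuesort\colon \residue{K} \hookrightarrow \residue{L}$, and any $a \in \mainsort(\model{K})$, produce an $\Lseclift$-embedding $g$ extending $f$ whose domain contains $a$. For this I would first apply \cref{embedding-theorem} directly to obtain an $\Lsep$-embedding $g_0\colon \model{A'} \hookrightarrow \model{L}$ with $a \in A'$. The $\Lseclift$-closure of $\model{A'}$ inside $\model{K}$ is then obtained by adjoining $\{s(\gamma) : \gamma \in \valuegroup{A'}\}$ and $\{\iota(\alpha) : \alpha \in \residue{A'}\}$. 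Crucially, these new elements do not extend $\valuegroup{A'}$ or $\residue{A'}$: for any $\gamma \in \valuegroup{A'}$ realized by some $b \in A'$, the ratio $s(\gamma)/b$ is a unit with $\res(s(\gamma)/b) = \ac(b)^{-1} \in \residue{A'}$; and $\iota(\alpha) \in \O_{\core{K}}^\times$ has residue $\alpha \in \residue{A'}$ by construction. Therefore, adjoining all such elements yields an immediate extension of $\model{A'}$, and the prescribed assignment $s(\gamma) \mapsto s(\theta_\valuesort(\gamma))$, $\iota(\alpha) \mapsto \iota(\theta_\residuesort(\alpha))$ extends $g_0$ uniquely to an $\Lseclift$-embedding $g$, since source and target elements are characterized within the respective models by their values, residues, and compatibility with $s$ and $\iota$.

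The main technical obstacle is checking that $\model{L}$ remains $\sigma$-separable over the image of the enlarged $\Lseclift$-closure. Since both $s(\gamma)$ and $\iota(\alpha)$ lie in $\core{K}$ (by $\sigma$-equivariance of $s$ and $\iota$), their images under $g$ lie in $\core{L}$; $\sigma$-separability for models of $\primary$ over elements of the inversive core is then automatic by \cref{separability-cheat-sheet-2}(1), since algebraic freeness over $\sigma(L)$ is trivial for elements already in $\sigma^n(L)$ for all $n$. With the $\Lseclift$-embedding lemma in place, resplendent relative quantifier elimination over $\residuesort$ and $\valuesort$ follows from the same Morleyization back-and-forth used to deduce \cref{relative-qe} from \cref{embedding-theorem}.
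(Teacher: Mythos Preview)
Your approach has the right shape, but two steps fail as written. First, after applying \cref{embedding-theorem} the structure $A'$ is only an $\Lsep$-substructure and need not be closed under the induced $\ac$; hence the claim ``$\ac(b)^{-1}\in\residue{A'}$'' is unjustified for $\gamma\in\valuegroup{A'}\setminus\valuegroup{A}$, and adjoining $s(\gamma)$ need not be an immediate extension. This is repairable by invoking \cref{relative-qe-ac} instead, so that the intermediate substructure is already $\ac$-closed.

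The second gap is more serious: your $\sigma$-separability claim is false. Adjoining an element of $\core{L}$ to a $\sigma$-separable $B\subseteq L$ does \emph{not} in general preserve $\sigma$-separability. For a counterexample take $L=\mathbb{Q}(s_i:i\in\mathbb{Z})(t_j:j\geq 0)$ with $\sigma$ shifting indices, $B=\mathbb{Q}(t_j:j\geq 0)$, and $c=s_0\in\core{L}$: then $B\subseteq L$ is $\sigma$-separable (both are models of $\primary$), but $\extnot{B}{c}\subseteq L$ is not, since $s_0\in\extnot{B}{c}\cap\sigma(L)\setminus\sigma(\extnot{B}{c})$. In your setting this obstruction appears whenever $\Val{\sigma}^{-1}(\gamma)\notin\valuegroup{A'}$, because then $\sigma^{-1}(s(\gamma))=s(\Val{\sigma}^{-1}(\gamma))$ lies in $K$ but not in your enlarged structure, so the $\lambda$-closure is strictly larger than what you describe. \cref{separability-cheat-sheet-2}(1) does not rescue this: it only upgrades almost $\sigma$-separability to $\sigma$-separability once the base is already known to satisfy $\primary$. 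The paper does not spell out a proof here (it defers to an external reference), but the intended fix is to weave $s$ and $\iota$ into the steps of the embedding lemma itself---using $s(\gamma)$ and $\iota(\alpha)$ as canonical choices when extending the value group and residue field, and keeping each intermediate structure closed under them---rather than bolting them on after the fact.
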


\runin{Ax-Kochen/Ershov} We now establish transfer theorems for $\WTHzero$, $\WTHzeroac$, and $\WTHzerosl$, using \cref{relative-qe}, \cref{relative-qe-ac}, and \cref{relative-qe-sl}.

\begin{theorem}\label{ake}
    Let $\model{K}$ and $\model{L}$ be models of $\WTHzero$. Then,
    \[
        \model{K} \equiv \model{L} \iff (\RV_K,\Rv{\sigma}) \equiv (\RV_L,\Rv{\sigma}).
    \]
    If $\model{K} \leq \model{L}$, then
    \[
        \model{K} \preceq \model{L} \iff (\RV_K,\Rv{\sigma}) \preceq (\RV_L,\Rv{\sigma}).
    \]
    Analogously, let $\model{K}$ and $\model{L}$ be models of $\WTHzeroac$ or $\WTHzerosl$. Then,
    \[
        \model{K} \equiv \model{L} \iff ((\residue{K},\Res{\sigma}) \equiv (\residue{L},\Res{\sigma}) \; \text{and} \; (\valuegroup{K},\Val{\sigma}) \equiv (\valuegroup{L},\Val{\sigma})).
    \]
    If $\model{K} \leq \model{L}$, then
    \[
        \model{K} \preceq \model{L} \iff ((\residue{K},\Res{\sigma}) \preceq (\residue{L},\Res{\sigma}) \; \text{and} \; (\valuegroup{K},\Val{\sigma}) \preceq (\valuegroup{L},\Val{\sigma})).
    \]
\end{theorem}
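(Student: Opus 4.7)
The strategy is to deduce the theorem as a formal consequence of the three relative quantifier elimination results \cref{relative-qe}, \cref{relative-qe-ac}, \cref{relative-qe-sl}, together with the interpretability observation in \cref{interpret-value}. I will describe the argument for the $\WTHzero$ case in detail; the $\WTHzeroac$ and $\WTHzerosl$ cases proceed by the same reasoning, using the pairs $((\residue{K},\Res{\sigma}),(\valuegroup{K},\Val{\sigma}))$ in place of $(\RV_K,\Rv{\sigma})$ and avoiding the $\valuesort\hookrightarrow\RVsort$ interpretation step.

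For the forward directions, all of $(\RV_K,\Rv{\sigma})$, $(\residue{K},\Res{\sigma})$, and $(\valuegroup{K},\Val{\sigma})$ are quantifier-free interpretable in $\model{K}$ via $\rv$, $\val$, and (in the $\Lsepac/\Lseclift$ cases) $\ac, s, \iota$. Elementary equivalence and elementary substructure therefore transfer automatically from $\model{K}$ to these reducts.

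For the reverse implication in the elementary equivalence part, let $\varphi$ be an $\Lsep$-sentence. By \cref{relative-qe}, $\varphi$ is equivalent modulo $\WTHzero$ to a sentence $\psi$ whose quantifiers range only over $\RVsort$ and $\valuesort$; since $\varphi$ has no free variables, neither does $\psi$, and its atomic subformulas all live in these two sorts. Using \cref{interpret-value} to interpret $(\valuegroup{K},\Val{\sigma})$ inside $(\RV_K,\Rv{\sigma})$, one rewrites $\psi$ as an equivalent $\L_{\RVsort}$-sentence $\tilde\psi$ with $\model{K}\vDash\varphi \iff (\RV_K,\Rv{\sigma})\vDash\tilde\psi$, and similarly for $\model{L}$. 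The hypothesis $(\RV_K,\Rv{\sigma})\equiv(\RV_L,\Rv{\sigma})$ then forces $\model{K}\vDash\varphi \iff \model{L}\vDash\varphi$.

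For the elementary substructure case, assume $\model{K}\leq\model{L}$ in $\Lsep$, so all quantifier-free $\Lsep$-formulas with parameters in $\model{K}$ evaluate identically in both models. Given any $\Lsep$-formula $\varphi(\overline{x})$ and tuple $\overline{a}$ from $\model{K}$, apply \cref{relative-qe} to obtain an equivalent $\psi(\overline{x})$ with quantifiers only over $\RVsort$ and $\valuesort$. Every atomic subformula of $\psi(\overline{a})$ is either a main-sort atomic fact in $\overline{a}$ (preserved by the substructure inclusion) or, after passing through the sort maps, a formula over $(\RV_K,\Rv{\sigma})$ with parameters of the form $\rv(t(\overline{a})), \val(t(\overline{a}))$ which live in $\RV_K$, while the blocks of quantifiers over $\RVsort$ and $\valuesort$ are preserved by $(\RV_K,\Rv{\sigma})\preceq(\RV_L,\Rv{\sigma})$ (together with the induced $\valuesort$-embedding). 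Combining these observations yields $\model{K}\vDash\varphi(\overline{a}) \iff \model{L}\vDash\varphi(\overline{a})$, hence $\model{K}\preceq\model{L}$. The only subtle bookkeeping is precisely the identification of which atomic subformulas of $\psi$ end up in which sort, but this is a formal matter once relative quantifier elimination is in hand; no new back-and-forth or embedding work is required.
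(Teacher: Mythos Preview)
Your argument is correct and follows essentially the same route as the paper: both derive the theorem as a formal consequence of relative quantifier elimination (\cref{relative-qe}, \cref{relative-qe-ac}, \cref{relative-qe-sl}). The paper's proof of the $(\equiv)$ direction is slightly slicker, observing that $(\mathbb{Q},\val_{\mathrm{triv}},\mathrm{id}_{\mathbb{Q}})$ is a common substructure of any two models of $\WTHzero$; this absorbs in one stroke the minor bookkeeping you do around closed $\mainsort$-terms (your claim that ``its atomic subformulas all live in these two sorts'' is not literally true, since atomic formulas built from the $\mainsort$-constants $0,1$ and the operations may appear, but these are decided uniformly by the theory, which is exactly what the common-substructure observation encodes).
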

\begin{proof}
    The proofs are agnostic to which one of the three languages we work in.
    \par $(\equiv)$: this follows from \cref{relative-qe}, since $(\mathbb{Q},\val_{\mathrm{triv}},\mathrm{id}_{\mathbb{Q}})$ is a common substructure between any two models of $\WTHzero$.
    \par $(\preceq)$: This is a direct consequence of relative quantifier elimination.
\end{proof}

\begin{proposition}\label{general-existential}
    Let $\model{K}$ and $\model{L}$ be two models of $\WTHzero$ and $\model{A} \leq \model{K},\model{L}$ be a common substructure. Then,
    \[
        \Th^\exists_{\Lsorts(\RVsort(\model{A})\cup\valuesort(\model{A}))}(\RV_K,\valuegroup{K}) \subseteq  \Th^\exists_{\Lsorts(\RVsort(\model{A})\cup\valuesort(\model{A}))}(\RV_L,\valuegroup{L})
    \]
    if and only if
    \[
        \Th^\exists_{\Lsep(\model{A})}(\model{K}) \subseteq \Th^\exists_{\Lsep(\model{A})}(\model{L}).
    \]
    Analogously, let $\model{K}$ and $\model{L}$ be models of $\WTHzeroac$ or $\WTHzerosl$. Then,
    \[
        \Th^\exists_{\Lsorts(\residuesort(\model{A})\cup\valuesort(\model{A}))}(\residue{K},\valuegroup{K}) \subseteq  \Th^\exists_{\Lsorts(\residuesort(\model{A})\cup\valuesort(\model{A}))}(\residue{L},\valuegroup{L})
    \]
    if and only if
    \[
        \Th^\exists_{\L(\model{A})}(\model{K}) \subseteq \Th^\exists_{\L(\model{A})}(\model{L})
    \]
    for the appropriate $\L \in \{\Lsepac,\Lseclift\}$.
\end{proposition}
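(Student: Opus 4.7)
The plan is to treat the two implications separately, with the bulk of the work lying in the forward direction. The converse $(\Leftarrow)$ is formal: any existential $\Lsorts$-formula with parameters in $\RVsort(\model{A}) \cup \valuesort(\model{A})$ is, via the natural inclusion of reducts, already an existential $\Lsep$-formula with parameters in $\model{A}$, so containment in the full existential theory restricts to containment in the reduct. The same observation handles the $\Lsepac$ and $\Lseclift$ settings.

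For $(\Rightarrow)$, I will apply the standard equivalence that $\exists$-theory containment over a parameter set $C$ corresponds to the existence of an embedding fixing $C$ into a suitable elementary extension, twice: once at the $\Lsorts$-level to produce embeddings of the $\RVsort$ and $\valuesort$ sorts, then at the $\Lsep$-level to lift them. Let $\phi(\bar y) = \exists \bar x\, \psi(\bar x, \bar y)$ be an existential $\Lsep$-formula with $\bar a \in A$ and $\model{K} \vDash \phi(\bar a)$, fix a witness $\bar c \in K$, replace $\model{K}$ by an $\aleph_1$-saturated elementary extension (this leaves the existential theory over $\model{A}$ unchanged), and pick a countable $\Lsep$-substructure $\model{A}_0 \leq \model{A}$ with $\bar a \in A_0$. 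The hypothesis restricted to parameters in $\RVsort(\model{A}_0) \cup \valuesort(\model{A}_0)$, together with the standard equivalence, yields an $\Lsorts$-embedding $(\RV_K, \valuegroup{K}) \hookrightarrow (N_\RV, N_\Gamma)$ over these parameters, for some $(N_\RV, N_\Gamma) \succeq (\RV_L, \valuegroup{L})$. By the stable embeddedness of $\RVsort \cup \valuesort$ in models of $\WTHzero$ (a formal consequence of \cref{relative-qe}), this extension can be realized inside a sufficiently saturated $\Lsep$-elementary extension $\model{L}^* \succeq \model{L}$, producing compatible $\theta_\RV \colon \RV_K \hookrightarrow \RV_{L^*}$ and $\theta_\Gamma \colon \valuegroup{K} \hookrightarrow \valuegroup{L^*}$ extending the natural embedding $\model{A}_0 \leq \model{L} \preceq \model{L}^*$; I take $\model{L}^*$ also $\aleph_1$-saturated.

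I then apply the embedding lemma \cref{embedding-theorem} once per main-sort coordinate of $\bar c$ (the $\RVsort$- and $\valuesort$-coordinates are already handled by $\theta_\RV, \theta_\Gamma$), obtaining a countable $\Lsep$-substructure $\model{A}' \leq \model{K}$ with $\bar c \in A'$ and an $\Lsep$-embedding $g \colon \model{A}' \hookrightarrow \model{L}^*$ that extends the inclusion of $\model{A}_0$ and whose $\RVsort$- and $\valuesort$-components are restrictions of $\theta_\RV$ and $\theta_\Gamma$. Since $g$ preserves the quantifier-free formula $\psi$ and fixes $\bar a$, we conclude $\model{L}^* \vDash \psi(g(\bar c), \bar a)$, hence $\model{L}^* \vDash \phi(\bar a)$, and by elementarity $\model{L} \vDash \phi(\bar a)$. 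The $\WTHzeroac$ and $\WTHzerosl$ variants follow by the same template, using \cref{relative-qe-ac} or \cref{relative-qe-sl} respectively together with the corresponding stable embeddedness of $\residuesort \cup \valuesort$. The main obstacle is the intermediate stable embeddedness step: one needs the $\Lsorts$-level elementary extension to arise from an $\Lsep$-elementary extension of $\model{L}$ with enough saturation to iterate the embedding lemma, which is precisely what stable embeddedness of $\RVsort \cup \valuesort$ delivers.
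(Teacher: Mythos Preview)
Your proof is correct and follows essentially the same strategy as the paper's: reduce to countable $\model{A}$, pass to saturated models, produce compatible $\theta_\RV,\theta_\Gamma$ from the $\Lsorts$-level hypothesis, and invoke \cref{embedding-theorem}. The paper is slightly more direct in that it simply replaces $\model{L}$ by a sufficiently saturated elementary extension and reads off $\theta_\RV,\theta_\Gamma$ immediately from saturation plus the existential inclusion, bypassing your intermediate $(N_\RV,N_\Gamma)$ and the appeal to stable embeddedness (which is not wrong, just unnecessary once $\model{L}^*$ is taken saturated relative to $\model{K}$).
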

\begin{proof}
    We give the argument for $\Lsep$.
    One direction is clear. For the other, without loss of generality we may assume that $\model{K}$ and $\model{L}$ are $\aleph_1$-saturated and $\model{A}$ is countable. Suppose that $\Th^\exists_{\Lsep(\model{A})}(\model{K}) \not\subseteq \Th^\exists_{\Lsep(\model{A})}(\model{L})$. Let this be witnessed by some quantifier-free formula $\varphi(X)$ with parameters in $A$, and let $b \in K$ be such that $K \vDash \varphi(b)$, but $L \not\vDash \exists X \varphi(X)$. By assumption, we have compatible embeddings $\theta_\RV \colon \RV_K \to \RV_L$ and $\theta_\Gamma \colon \valuegroup{K} \to \valuegroup{L}$. Then, \cref{embedding-theorem} yields an embedding $g\colon \model{C} \to \model{L}$ of some countable substructure $A \leq \model{C} \leq \model{L}$ containing $b$, so $L \vDash \varphi(g(b))$. This is a contradiction, since then in particular $L \vDash \exists X \varphi(X)$.
\end{proof}

This implies two more recognizable results.

\begin{theorem}\label{existential-ake}
    Let $\model{K}$ and $\model{L}$ be two models of $\WTHzero$, with $\model{K} \leq \model{L}$. Then, $\model{K} \preceq_\exists \model{L}$ if and only if $(\RV_K,\Rv{\sigma}) \preceq_\exists (\RV_L,\Rv{\sigma})$. Analogously, let $\model{K}$ and $\model{L}$ be two models of $\WTHzeroac$ or $\WTHzerosl$, with $\model{K} \leq \model{L}$. Then, $\model{K} \preceq_\exists \model{L}$ if and only if $(\residue{K},\Res{\sigma}) \preceq_\exists (\residue{L},\Res{\sigma})$ and $(\valuegroup{K},\Val{\sigma}) \preceq_\exists (\valuegroup{L},\Val{\sigma})$.
\end{theorem}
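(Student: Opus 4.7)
The plan is to derive \cref{existential-ake} as a direct application of \cref{general-existential} taken with $\model{A} = \model{K}$ as the common substructure of $\model{K}$ and $\model{L}$.

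The forward direction in each of the three variants is essentially tautological: any existential $\L_{\RVsort}$- (resp.\ $\Lringsigma$-, $\Loagsigma$-) formula over parameters in the corresponding sort of $\model{K}$ is automatically an existential $\Lsep$- (resp.\ $\Lsepac$-, $\Lseclift$-) formula over parameters in $\model{K}$, whose satisfaction in $\model{L}$ pulls back to $\model{K}$ by the hypothesis $\model{K} \preceq_\exists \model{L}$ and determines the satisfaction in the respective reduct.

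For the backward direction in the $\WTHzero$-case, the hypothesis of \cref{general-existential} to verify is
\[
    \Th^\exists_{\Lsorts(\RVsort(\model{K})\cup\valuesort(\model{K}))}(\RV_K,\valuegroup{K}) \subseteq \Th^\exists_{\Lsorts(\RVsort(\model{K})\cup\valuesort(\model{K}))}(\RV_L,\valuegroup{L}).
\]
The key observation is that $\val_{\rv}\colon \RVsort \to \valuesort$ is surjective (on the nose, since $\val_{\rv}(0) = \infty$ by convention). Thus any existential $\Lsorts$-formula over parameters from $\RVsort(\model{K}) \cup \valuesort(\model{K})$ is equivalent, in both $(\RV_K,\valuegroup{K})$ and $(\RV_L,\valuegroup{L})$, to an existential $\L_{\RVsort}$-formula over parameters from $\RVsort(\model{K})$: each $\valuesort$-parameter $\gamma \in \valuegroup{K}$ is replaced by $\val_{\rv}(\alpha)$ for some chosen $\alpha \in \RV_K$ with $\val_{\rv}(\alpha) = \gamma$, and each $\valuesort$-quantifier is pulled back through $\val_{\rv}$. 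The required inclusion then follows immediately from the assumption $(\RV_K,\Rv{\sigma}) \preceq_\exists (\RV_L,\Rv{\sigma})$.

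For the backward direction in the $\WTHzeroac$- and $\WTHzerosl$-cases, the reduct language $\Lsorts$ (now with sorts $\residuesort$ and $\valuesort$) has no function symbols connecting $\residuesort$ and $\valuesort$, so every atomic $\Lsorts$-formula lies in exactly one of these two sorts. Hence any existential $\Lsorts$-formula over parameters from $\residuesort(\model{K}) \cup \valuesort(\model{K})$ decomposes logically as the conjunction of an existential $\Lringsigma$-formula over $\residuesort(\model{K})$ and an existential $\Loagsigma$-formula over $\valuesort(\model{K})$; each transfers from $(\residue{L},\valuegroup{L})$ to $(\residue{K},\valuegroup{K})$ by its respective hypothesis, and \cref{general-existential} then concludes. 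I anticipate no serious obstacle: the only mildly delicate step is the translation between $\Lsorts$ and $\L_{\RVsort}$ in the $\WTHzero$-case, and this is entirely routine.
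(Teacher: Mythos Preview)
Your proposal is correct and follows the paper's approach exactly: the paper's proof reads ``One direction is clear. For the other, apply \cref{general-existential} with $A = K$.'' You have simply unpacked the one step the paper leaves implicit, namely why the one-sorted hypothesis $(\RV_K,\Rv{\sigma}) \preceq_\exists (\RV_L,\Rv{\sigma})$ yields the two-sorted $\Lsorts$-hypothesis of \cref{general-existential}, via surjectivity of $\val_{\rv}$ and the quantifier-free interpretability of $\valuesort$ in $\RVsort$ (cf.\ \cref{interpret-value}); likewise you make explicit the orthogonality of $\residuesort$ and $\valuesort$ in the $\WTHzeroac$ and $\WTHzerosl$ cases.
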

\begin{proof}
    One direction is clear. For the other, apply \cref{general-existential} with $A = K$.
\end{proof}

\begin{theorem}\label{existential-ake-2}
    Let $\model{K}$ and $\model{L}$ be two models of $\WTHzero$. Then, $\operatorname{Th}^\exists_{\Lsep}(\model{K}) \subseteq \operatorname{Th}^\exists_{\Lsep}(\model{L})$ if and only if $\operatorname{Th}^\exists(\RV_K,\Rv{\sigma}) \subseteq \operatorname{Th}^\exists(\RV_L,\Rv{\sigma})$. Analogously, let $\model{K}$ and $\model{L}$ be two models of $\WTHzeroac$ or $\WTHzerosl$. Then, $\operatorname{Th}^\exists_{\L}(\model{K}) \subseteq \operatorname{Th}^\exists_{\L}(\model{L})$ if and only if $\operatorname{Th}^\exists(\valuegroup{K},\Val{\sigma}) \subseteq \operatorname{Th}^\exists(\valuegroup{L},\Val{\sigma})$ and $\operatorname{Th}^\exists(\residue{K},\Res{\sigma}) \subseteq \operatorname{Th}^\exists(\residue{L},\Res{\sigma})$, for the appropriate $\L \in \{\Lsepac,\Lseclift\}$.
\end{theorem}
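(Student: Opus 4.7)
The forward direction in all three languages is immediate: the $\RVsort$-language (respectively, the combined $\residuesort$- and $\valuesort$-languages) sits inside $\Lsep$ (respectively, $\Lsepac$ and $\Lseclift$), so every existential sentence on the auxiliary sorts is automatically an existential sentence of the full language, witnessed by tuples drawn from those sorts.

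For the converse, the plan is to apply Proposition~\ref{general-existential} to a common substructure $\model{A}$ chosen small enough that its auxiliary-sort parameters are $\emptyset$-definable, and therefore inessential. The natural choice is $\model{A} = (\mathbb{Q}, v_{\mathrm{triv}}, \mathrm{id}_\mathbb{Q})$: it is inversive and hence trivially a model of $\primary$, and the extensions $\mathbb{Q} \subseteq K$, $\mathbb{Q} \subseteq L$ are $\sigma$-separable because $\sigma(\mathbb{Q}) = \mathbb{Q}$ is linearly disjoint over itself from anything, so $\model{A}$ is an $\Lsep$-substructure of both $\model{K}$ and $\model{L}$. The prime field also carries an angular component (the restriction of $\res$ to units, automatically $\sigma$-equivariant) and the only possible section of the trivial valuation together with the identity residue lift, so $\model{A}$ remains a common substructure in $\Lsepac$ and $\Lseclift$.

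Now every element of $\mathbb{Q}$ is a closed $\{0,1\}$-term in $\mainsort$, hence its images under $\rv$, $\val$, $\ac$ and $\res$ are $\emptyset$-definable in the appropriate auxiliary sorts (and $\valuesort(\model{A}) = \{0,\infty\}$ is manifestly so). For the $\WTHzero$ case one also uses Remark~\ref{interpret-value} to read off the existential theory of $(\valuegroup{K},\Val{\sigma})$ from that of $(\RV_K,\Rv{\sigma})$, so that the hypothesised parameter-free inclusion automatically upgrades to the inclusion of two-sorted existential theories with parameters from $\RVsort(\model{A}) \cup \valuesort(\model{A})$; the $\Lsepac$- and $\Lseclift$-versions are handled analogously with $\residuesort(\model{A}) \cup \valuesort(\model{A})$. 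Proposition~\ref{general-existential} then yields the required containment of existential theories of $\model{K}$ and $\model{L}$. I do not foresee any substantive obstacle: the theorem is a parameter-free reformulation of Proposition~\ref{general-existential}, and the only verification needed is the standard check that $(\mathbb{Q}, v_{\mathrm{triv}}, \mathrm{id}_\mathbb{Q})$ embeds as an appropriate substructure in each of the three languages.
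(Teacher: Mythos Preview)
Your proposal is correct and follows exactly the paper's approach: the paper's proof is the single line ``apply \cref{general-existential} with $A = \mathbb{Q}$'', and you have spelled out precisely the verifications this entails (that $(\mathbb{Q},v_{\mathrm{triv}},\mathrm{id}_{\mathbb{Q}})$ is a common $\Lsep$-, $\Lsepac$-, and $\Lseclift$-substructure, and that its auxiliary-sort parameters are $\emptyset$-definable so the parametrised hypothesis of \cref{general-existential} reduces to the parameter-free one). Your extra care in invoking \cref{interpret-value} to pass from the one-sorted $\RVsort$-hypothesis to the two-sorted $(\RVsort,\valuesort)$-hypothesis required by \cref{general-existential} is a detail the paper leaves implicit.
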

\begin{proof}
    One direction is clear. For the other, apply \cref{general-existential} with $A = \mathbb Q$.
\end{proof}
\subsection{$\mathrm{NTP}_2$}
We follow the strategy developed by Chernikov and Hils in \cite{chernikov2014valued}. We first show that dense pairs of algebraically closed valued fields are $\mathrm{NIP}$, a folklore fact for which we couldn't find a proof in the literature; we include one for the convenience of the reader, using the same strategy deployed by Delon in \cite{delon2types} to show that separably algebraically closed valued fields are $\mathrm{NIP}$.

\begin{definition}
    we let $\LDP$ be the three-sorted Denef-Pas language:
    \begin{enumerate}
        \item $\mainsort$, with language $\{+,\cdot,-,0,1\}$,
        \item $\residuesort$, with language $\{+,\cdot,-,0,1\}$,
        \item $\valuesort$, with language $\{+,\leq,0,\infty\}$,
    \end{enumerate}
    and connecting functions $\ac\colon \mainsort \to \residuesort$ and $\val\colon \mainsort \to \valuesort$. We let $\LDP^*$ be an expansion of $\LDP$ where, on the sort $\mainsort$, we add a unary predicate $P(x)$, and countably many functions $(\lambda_n^i)_{n \in \mathbb N, 1 \leq i \leq n}$, where each $\lambda_n^i$ is $(n+1)$-ary. Denote by $\Lringpairs$ the resulting language given by $\{+,\cdot,-,0,1\} \cup \{P(x)\} \cup \{\lambda_n^i \mid n \in \mathbb{N}, 1 \leq i \leq n\}$.
\end{definition}

\begin{definition}
    We let $\mathrm{PPairs}$ be the $\LDPlambda$-theory of proper pairs $(K,P(K))$ of valued fields, where $P(K)$ is relatively algebraically closed and the $\lambda$-functions are interpreted as giving the coefficients for $P$-linear independence. We let $\denseacvf$ be the $\LDPlambda$-theory of proper dense pairs of algebraically closed valued fields.
\end{definition}

\begin{remark}
    Note that, when we interpret a valued difference field $(K,\val,\sigma)$ as an $\LDPlambda$-structure with $P(K) = \sigma(K)$, the $\lambda$-functions from $\LDPlambda$ do not coincide with the $\lambda$-functions introduced in \cref{lambda-sigma}. Namely, the $\lambda$-functions from $\LDPlambda$ output the coefficient of a $P(K)$-linear combination, an element of $P(K) = \sigma(K)$, whereas the $\lambda$-functions in \cref{lambda-sigma} output $\sigma^{-1}$ of such a coefficient, so an element of $K$. The functions are however interdefinable (and in fact, both $0$-definable in the respective reducts).
\end{remark}
    
\begin{proposition}\label{acvfpairisnip}
    $\denseacvf$ is $\mathrm{NIP}$.
\end{proposition}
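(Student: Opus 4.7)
The plan is to adapt Delon's strategy from \cite{delon2types} for separably closed valued fields to the setting of dense pairs, thus reducing $\mathrm{NIP}$-ness of $\denseacvf$ to that of $\mathrm{ACVF}$.

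The starting point is the algebraic observation that if $(K,P(K)) \vDash \denseacvf$, then $P(K) \preceq K$ as algebraically closed valued fields. Indeed, density of $P(K)$ in $K$ forces $\valuegroup{P(K)} = \valuegroup{K}$ and $\residue{P(K)} = \residue{K}$ by a standard lifting argument using the ultrametric inequality, so $P(K) \subseteq K$ is an immediate extension of algebraically closed valued fields; by quantifier elimination in $\mathrm{ACVF}$, such an extension is elementary. In particular, $P(K)$ carries no induced structure beyond its pure $\mathrm{ACVF}$ structure.

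Next, I would analyse types in $\denseacvf$ over a model $(M,P(M))$. The $\lambda$-functions realise $K$ as a $P(K)$-vector space: every element $a \in K$ is canonically recovered from a tuple of $P(K)$-coordinates against a $P(K)$-basis. Correspondingly, the complete type $\tp_{\LDPlambda}(a/(M,P(M)))$ should be determined by (i) the $\mathrm{ACVF}$-type of $a$ over $M$, together with (ii) the $\mathrm{ACVF}$-type over $P(M)$ of the tuple of $\lambda$-coordinates attached to $a$. Both pieces of data live purely inside $\mathrm{ACVF}$. Making this precise is the hardest step: it amounts either to a relative quantifier elimination for $\denseacvf$ in $\LDPlambda$ down to boolean combinations of $\mathrm{ACVF}$-formulas on $\mainsort$ and on the predicate $P$, or to a direct back-and-forth along the lines of Delon's argument for $\mathrm{SCVF}$, in which one exploits the density of $P(K)$ in $K$ to approximate an element of $K$ by $P(K)$-terms up to arbitrary precision, and then invokes the elementary substructure property established in the previous paragraph to match the coordinates on the two sides.

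Once the type-reduction above is in place, $\mathrm{NIP}$-ness of $\denseacvf$ follows from $\mathrm{NIP}$-ness of $\mathrm{ACVF}$ applied both to the ambient model and to the substructure $P(K)$: any alleged $\mathrm{IP}$-pattern witnessed by a formula $\varphi(x;y) \in \LDPlambda$ would pull back, through the reduction of (i)--(ii), to an $\mathrm{IP}$-pattern for some $\mathrm{ACVF}$-formula over a mild enlargement of the parameter set, contradicting that $\mathrm{ACVF}$ is $\mathrm{NIP}$. The main technical obstacle is thus the type-reduction step, and specifically the uniformity with which the $\lambda$-coordinates of $a$ can be controlled, which is where the density hypothesis (as opposed to mere elementarity) is essential.
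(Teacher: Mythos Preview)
Your proposal leaves the decisive step open: you acknowledge that the type-reduction (i)+(ii) is ``the hardest step'' but do not carry it out, and the sketch you give (relative QE, or back-and-forth using density) is not a proof. Moreover, the decomposition you suggest---ACVF-type of $a$ over $M$ together with ACVF-type of the $\lambda$-coordinates over $P(M)$---is not obviously well-posed: the $\lambda$-coordinates depend on a choice of basis, and it is unclear over what parameter set and in what uniform way an alleged IP-pattern for an $\LDPlambda$-formula would pull back to an IP-pattern for a fixed $\mathrm{ACVF}$-formula.

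The paper's route avoids this entirely and is in fact much closer to what Delon actually does for $\mathrm{SCVF}$. One uses the coheir-counting criterion for $\mathrm{NIP}$: it suffices to show that every type over a model $M$ has at most $2^{|M|}$ global coheirs. Given $p \in S(M)$, pass to a small elementary extension $N \ni a \vDash p$ and bound the coheirs of $\operatorname{tp}(N/M)$. Now split the structure, not as ``ACVF-type plus coordinates'', but as ``pure pair-of-fields part plus valuation part''. The key input is that the theory of proper pairs of algebraically closed fields (in $\Lringpairs$, i.e.\ ring language with predicate and $\lambda$-functions, no valuation) is \emph{stable}; hence $\operatorname{tp}_{\Lringpairs}(N/M)$ has a unique global coheir $r$, and any $N' \vDash r$ is linearly disjoint from $\mathcal{U}$ over $M$, which pins down the $\lambda$-functions on $N'\mathcal{U}$. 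The remaining freedom is exactly the choice of the $\mathrm{ACVF}$-type of $N$ over $M$, and since $\mathrm{ACVF}$ is $\mathrm{NIP}$ this contributes at most $2^{|M|}$ coheirs. Your observation that $P(K) \preceq K$ in $\mathrm{ACVF}$ is correct but plays no role; what is used instead is stability of the underlying pair-of-fields reduct.
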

\begin{proof}
    Let $(M,P(M)) \vDash \denseacvf$ and let $(M,P(M)) \preceq (\mathcal{U}, P(\mathcal{U}))$ be a monster model. We show that for any $p \in S(M)$ there are at most $2^{\vert M \vert}$ global coheirs of $p$, i.e. $p \subseteq q \in S(\mathcal{U})$ with $q$ finitely satisfiable in $M$.
    \par Indeed, take any $a \vDash p$ and let $N \succeq M$ be such that $a \in N$ and $\vert N \vert = \vert M \vert$. It is enough to show that $p' = \operatorname{tp}(N/M)$ has at most $2^{\vert M \vert}$ global coheirs.
    \par  Since pairs of algebraically closed fields are stable, $\operatorname{tp}_{\Lringpairs}(N/M)$ has a unique global coheir, call it $r$. If $N' \vDash r$, then $N'$ is linearly disjoint from $\mathcal{U}$ over $M$, and the $\lambda$-functions are uniquely determined on the compositum $N'\mathcal{U}$. In other words, the $\Lringpairs$-structure on $N'\mathcal{U}$ is uniquely determined, and the number of global coheirs of $\operatorname{tp}(N/M)$ is bounded by the number of global coheirs of the $\mathrm{ACVF}$-type of $N$ over $M$. Since $\mathrm{ACVF}$ is $\mathrm{NIP}$ by \cite{delon2types}, there are at most $2^{\vert N \vert} = 2^{\vert M \vert}$ many such.
\end{proof}

\begin{lemma}\label{embed-in-acvf}
    Any model of $\mathrm{PPairs}$ embeds in a model of $\mathrm{ACVF}_{\mathrm{dense}}$.
\end{lemma}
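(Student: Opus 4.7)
The plan is to embed $K$ into a sufficiently saturated model of $\mathrm{ACVF}$ and then build inside it a dense proper algebraically closed subfield $E'$ containing $E$ and linearly disjoint from $K$ over $E$; linear disjointness is exactly what guarantees the inclusion is an $\LDPlambda$-embedding, since the $\lambda$-functions record $P$-linear combinations. First, fix an embedding $K \hookrightarrow K^{\mathrm{alg}}$ and let $\tilde E \coloneqq E^{\mathrm{alg}} \cap K^{\mathrm{alg}}$. Because $E$ is relatively algebraically closed in $K$ with $E \subsetneq K$, a transcendental element of $K/E$ remains transcendental over $\tilde E$, so $\tilde E \subsetneq K^{\mathrm{alg}}$; moreover, in characteristic zero, $E$ being relatively algebraically closed in $K$ is equivalent to $K$ and $E^{\mathrm{alg}}$ being linearly disjoint over $E$.

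Now pass to a $\kappa$-saturated elementary extension $\mathcal U \succeq K^{\mathrm{alg}}$ of $\mathrm{ACVF}$, with $\kappa$ regular and strictly larger than $|K|$, and fix a transcendence basis $T \subseteq K$ of $K/E$. Enumerate $\{(a_\alpha,\gamma_\alpha) : \alpha<\kappa\} = \mathcal U \times \valuegroup{\mathcal U}$ and recursively build a chain of algebraically closed subfields $\tilde E = E_0 \subseteq E_1 \subseteq \cdots$ of $\mathcal U$, each of cardinality $<\kappa$. At a successor stage $\alpha+1$, pick $b_\alpha \in B_{\gamma_\alpha}(a_\alpha)$ transcendental over $E_\alpha \cdot K$ --- this is possible because the open ball $B_{\gamma_\alpha}(a_\alpha)$ is definable and of size $\kappa$ in $\mathcal U$, while $E_\alpha \cdot K$ has cardinality $<\kappa$ --- and set $E_{\alpha+1} \coloneqq E_\alpha(b_\alpha)^{\mathrm{alg}}$ inside $\mathcal U$. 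Take unions at limits, and let $E' \coloneqq \bigcup_{\alpha<\kappa} E_\alpha$.

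By construction $E'$ is algebraically closed and dense in $\mathcal U$. A transfinite induction shows that $T$ remains algebraically independent over every $E_\alpha$: at a successor, $b_\alpha$ was chosen transcendental over $E_\alpha(T)$, so a transcendence degree count gives $T$ algebraically independent over $E_\alpha(b_\alpha)$, and hence over its algebraic closure $E_{\alpha+1}$; the limit case is immediate. Thus $K$ and $E'$ are algebraically free over $E$, which, because $E$ is relatively algebraically closed in $K$, is equivalent to being linearly disjoint over $E$. In particular $E' \subsetneq E' \cdot K \subseteq \mathcal U$, so $(\mathcal U, E')$ is a proper dense pair of algebraically closed valued fields, i.e.\ a model of $\denseacvf$, and the inclusion $K \hookrightarrow \mathcal U$ defines an $\LDPlambda$-embedding $(K,E) \hookrightarrow (\mathcal U, E')$ after extending the angular component from $K$ to $\mathcal U$ and restricting to $E'$.

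The main technical obstacle is the successor step of the induction, namely checking that taking algebraic closure after adjoining a generic transcendental preserves the algebraic independence of $T$ over the new subfield --- so that linear disjointness survives all the way to $E'$. The rest is routine cardinality bookkeeping guaranteeing that saturation provides enough transcendentals at every stage.
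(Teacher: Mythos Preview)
Your argument is correct in its core and parallel to the paper's, though the execution differs: the paper first replaces $(K,P(K))$ by $(K^{\mathrm{alg}}, P(K)^{\mathrm{alg}})$ and then iteratively adjoins a generic transcendental $t$ of each ball missing the predicate, extending \emph{both} the ambient field and the predicate to $(K(t), P(K)(t))$ with the Gauss valuation before taking algebraic closures again, whereas you fix a saturated ambient $\mathcal U$ in advance and grow only the predicate inside it. Both rest on the same observation---adjoining to the predicate an element transcendental over its compositum with $K$ preserves linear disjointness---so the difference is organizational; the paper's version has the advantage of avoiding saturated models entirely. On that note, there is a bookkeeping wrinkle in your write-up: enumerating $\mathcal U \times \valuegroup{\mathcal U}$ in order type $\kappa$ while also requiring $\mathcal U$ to be $\kappa$-saturated forces $\mathcal U$ to be saturated in its own cardinality, which is not available in $\mathrm{ZFC}$ without extra hypotheses. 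This does not affect the idea, and the cleanest repair is simply to iterate---alternately enlarging the predicate to meet the balls of the current ambient and then enlarging the ambient---which is exactly the paper's construction.
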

\begin{proof}
    Let $(K,P(K))$ be a model of $\mathrm{PPairs}$. Without loss of generality, we may assume that $K$ is algebraically closed, and thus so is $P(K)$. We can now make $P(K)$ dense in $K$ by a chain construction: for every ball $B \subseteq K$ with $B \cap P(K) = \emptyset$, let $t$ be a generic of $B$ in the predicate. In particular, $K \subseteq K(t)$ and $P(K) \subseteq P(K)(t)$ are Gauss extensions. Then, $(K,P(K)) \subseteq (K(t),P(K)(t))$ is an extension in $\LDPlambda$. We then once again move to the algebraic closures. By iterating this construction, the limit is a model of $\mathrm{ACVF}_{\mathrm{dense}}$.
\end{proof}

\begin{lemma}\label{lemma-nip}
    Let $\varphi(x,y)$ be a quantifier-free $\Lsepac$-formula. Then, modulo $\Tvdfzero$, it is $\mathrm{NIP}$.
\end{lemma}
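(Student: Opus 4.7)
The plan is to reduce the NIP of a quantifier-free $\Lsepac$-formula $\varphi(x,y)$ to the NIP of $\denseacvf$ (Proposition \ref{acvfpairisnip}), following the Chernikov-Hils strategy. The essential observation is that any $(K,\val,\sigma) \vDash \Tvdfzero$, viewed as the pair $(K, P(K) := \sigma(K))$, is a model of $\mathrm{PPairs}$: $\sigma(K)$ is relatively algebraically closed in $K$ since $(K,\sigma) \vDash \primary$, and the pair is proper since the model is non-inversive. By \cref{embed-in-acvf}, $(K, P(K))$ embeds in a model of $\denseacvf$, and NIP transfers from a theory to substructures of its models, so every quantifier-free $\LDPlambda$-formula is NIP in such a pair.

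The main technical step is to rewrite $\varphi(x,y)$ as a quantifier-free $\LDPlambda$-formula, at the cost of adding auxiliary variables. I would recursively flatten every $\sigma$-iterate appearing in $\varphi$: for every main-sort subterm $t$ and every exponent $j$ needed, introduce a new variable for $\sigma^j(t)$. Using the identities $\Res{\sigma}(\ac(t)) = \ac(\sigma(t))$ and $\Val{\sigma}(\val(t)) = \val(\sigma(t))$, one pushes all applications of $\Res{\sigma}$ and $\Val{\sigma}$ down to $\sigma$ on the main sort, eliminating them from the formula. Every occurrence of an $\Lsepac$-$\lambda$-function value $\ell = \lambda_n^i(\bar s, t)$ is then replaced by a fresh variable $\ell$ together with the constraint that $\sigma(\ell)$ equals the corresponding $\LDPlambda$-$\lambda$-function value at $(\bar s, t)$; this is legitimate since $P(K) = \sigma(K)$ makes the two $\lambda$-systems related by a single application of $\sigma$. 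The result is a quantifier-free $\LDPlambda$-formula $\psi(\tilde x, \tilde y)$, where $\tilde x, \tilde y$ are obtained from $(x,y)$ by iterated $\sigma$-applications and $\Lsepac$-$\lambda$-function computations.

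Finally, any shattering configuration $(a_I, b_j)_{I \subseteq \omega, j \in \omega}$ for $\varphi$ in a model of $\Tvdfzero$ lifts, by expanding the parameters to the corresponding $\tilde a_I, \tilde b_j$, to a shattering configuration for $\psi$ in the pair $(K, P(K))$. By the first paragraph this contradicts the NIP of $\psi$, so $\varphi$ is NIP modulo $\Tvdfzero$. The main obstacle is the flattening step, which requires careful bookkeeping to ensure that every $\sigma$-iterate and every $\Lsepac$-$\lambda$-function call is accounted for by an auxiliary variable together with an $\LDPlambda$-expressible constraint, so that the resulting $\psi$ lives entirely in $\LDPlambda$ while faithfully encoding $\varphi$.
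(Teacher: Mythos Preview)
Your overall strategy matches the paper's: both reduce to the NIP of quantifier-free $\LDPlambda$-formulas via \cref{embed-in-acvf} and \cref{acvfpairisnip}, after rewriting $\varphi$ so that the difference structure is encoded only through $\sigma$-iterates of the variables. Your treatment of $\Res{\sigma}$ and $\Val{\sigma}$ is fine and is implicit in the paper's argument.

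The divergence, and the place where your write-up has a genuine gap, is in the handling of the $\lambda$-functions. The paper does \emph{not} flatten them: it writes $\varphi(x,y)$ as $\psi(x,\sigma(x),\dots,\sigma^n(x),y,\sigma(y),\dots,\sigma^n(y))$ with $\psi$ a quantifier-free $\LDPlambda$-formula, so that the $\LDPlambda$-$\lambda$'s remain inside $\psi$ (possibly applied to arguments mixing the $x_i$'s and $y_j$'s). Crucially, the only new variables are $\sigma$-iterates, and these split cleanly: $\tilde a_i=(a_i,\sigma(a_i),\dots)$ depends only on $a_i$, and $\tilde b_I=(b_I,\sigma(b_I),\dots)$ only on $b_I$, so a shattering configuration for $\varphi$ lifts directly to one for $\psi$. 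Your proposal instead replaces each $\lambda^{\Lsepac}_n(\bar s,t)$ by a fresh variable $\ell$. But if $\bar s,t$ genuinely depend on both $x$ and $y$, then $\ell(a_i,b_I)$ depends on both indices, and there is no way to assign $\ell$ to $\tilde x$ or to $\tilde y$; the expansion ``$\tilde a_I,\tilde b_j$'' is then ill-defined, and the lift of the shattering configuration fails as stated. The ``constraint $\sigma(\ell)=\lambda^{\LDPlambda}_n(\bar s,t)$'' does not help: it is automatically satisfied on the intended substitution and so does not restore the bipartite structure needed for IP.

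The fix is simply to follow the paper: keep the $\lambda$'s in $\psi$ by using the identity $\sigma(\lambda^{\Lsepac}_n(\bar s,t))=\lambda^{\LDPlambda}_n(\bar s,t)$ to convert (rather than flatten) them, and only introduce new variables for $\sigma$-iterates of $x$ and $y$. Then $\psi$ lives in $\LDPlambda$, its variables split as $(x_0,\dots,x_n;y_0,\dots,y_n)$, and your final paragraph goes through.
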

\begin{proof}
    We proceed as in \cite[Lemma 4.2]{chernikov2014valued}. We can write $\varphi(x,y)$ as 
    \[
        \psi(x,\sigma(x),\dots \sigma^n(x), y, \sigma(y), \dots \sigma^n(y)),
    \]
    where $\psi(x_0,x_1,\dots x_n, y_0,y_1, \dots y_n)$ is a quantifier-free $\LDPlambda$-formula.
    \par Without loss of generality, we may assume that we are working with a non-surjective endomorphism; otherwise, the result is \cite[Lemma 4.2]{chernikov2014valued}.
    Now, by \cref{embed-in-acvf}, to check that $\psi(\overline{x},\overline{y})$ is $\mathrm{NIP}$ in $\mathrm{PPairs}$, it is enough to check if $\psi(\overline{x},\overline{y})$ is $\mathrm{NIP}$ in $\denseacvf$. We conclude by \cref{acvfpairisnip}.
\end{proof}

\begin{theorem}\label{ntp2transfer}
    Suppose $\model{K}$ is a model of $\WTHzeroac$. Then, $\model{K}$ is $\mathrm{NTP}_2$ in $\Lsepac$ if and only if $(\residue{K},\Res{\sigma})$ is $\mathrm{NTP}_2$ in $\Lringsigma$ and $(\valuegroup{K},\Val{\sigma})$ is $\mathrm{NTP}_2$ in $\Loagsigma$.
\end{theorem}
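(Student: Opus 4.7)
The plan is to follow the Chernikov--Hils strategy \cite{chernikov2014valued}, leveraging all the machinery that has just been set up: relative quantifier elimination (\cref{relative-qe-ac}), stable embeddedness of the residue and value sorts (the theorem following \cref{relative-qe-ac}), and the NIP bound for quantifier-free $\Lsepac$-formulas in $\Tvdfzero$ (\cref{lemma-nip}).

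The forward direction is immediate. Indeed, $(\residue{K},\Res\sigma)$ and $(\valuegroup{K},\Val\sigma)$ are interpretable in $\model{K}$, and by the remark following \cref{relative-qe-ac} they are stably embedded with full induced structure exactly $\Lringsigma$ and $\Loagsigma$ respectively. Since $\mathrm{NTP}_2$ passes to stably embedded definable (or interpretable) reducts, assuming $\model K$ is $\mathrm{NTP}_2$ yields that both $(\residue{K},\Res\sigma)$ and $(\valuegroup{K},\Val\sigma)$ are $\mathrm{NTP}_2$.

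For the converse, assume both reducts are $\mathrm{NTP}_2$. Fix a formula $\varphi(x,y)$ over $\model{K}$. By \cref{relative-qe-ac}, $\varphi(x,y)$ is equivalent modulo $\WTHzeroac$ to some $\psi(\bar{\alpha}(x,y),\bar{\gamma}(x,y))$, where $\psi$ is an $\L_{\residuesort,\valuesort}$-formula, and each component of $\bar\alpha$ has the form $\ac(t(x,y))$ for an $\Lsepac$-term $t$, and each component of $\bar\gamma$ has the form $\val(s(x,y))$ for an $\Lsepac$-term $s$. Because both $(\residue{K},\Res\sigma)$ and $(\valuegroup{K},\Val\sigma)$ are $\mathrm{NTP}_2$ and they are orthogonal sorts (with no further interaction in $\L_{\residuesort,\valuesort}$), the pure $\L_{\residuesort,\valuesort}$-formula $\psi$ is itself $\mathrm{NTP}_2$; this is a direct consequence of the general fact that a product/orthogonal sum of $\mathrm{NTP}_2$ structures is $\mathrm{NTP}_2$.

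The heart of the argument is now to glue: rewrite $\varphi(x,y)$ as $\psi(u,w)\wedge\chi(x,y;u,w)$ where $\chi$ collects the conjunctions $u_i=\ac(t_i(x,y))$ and $w_j=\val(s_j(x,y))$, and $u,w$ are fresh variables ranging over $\residuesort,\valuesort$. The formula $\chi(x,y;u,w)$ is (equivalent to) a quantifier-free $\Lsepac$-formula in the main-sort variables with parameters from the residue/value sorts, hence $\mathrm{NIP}$ by \cref{lemma-nip}. The Chernikov--Hils combinatorial argument (the proof of \cite[Theorem~4.4]{chernikov2014valued}) then applies verbatim: from a putative $\mathrm{TP}_2$-pattern for $\varphi$, extract via Ramsey an indiscernible array, use the $\mathrm{NIP}$ bound on $\chi$ to force the $\ac$-and-$\val$ data along each row to be eventually constant on an infinite sub-array, and conclude that the remaining $\mathrm{TP}_2$ pattern lives in $\psi$, contradicting the $\mathrm{NTP}_2$-ness of $\psi$. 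The main obstacle is simply to verify that this extraction step goes through in our setting with a non-surjective endomorphism; but since the $\mathrm{NIP}$ statement of \cref{lemma-nip} was already tailored for $\Tvdfzero$ (not requiring surjectivity), and relative quantifier elimination also holds without surjectivity in $\WTHzeroac$, no further adjustment is needed.
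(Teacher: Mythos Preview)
Your proposal is correct and follows essentially the same approach as the paper: invoke \cref{relative-qe-ac} and \cref{lemma-nip}, then run the Chernikov--Hils argument verbatim. The paper's proof is in fact terser than yours, simply stating that \cite[Theorem~4.1]{chernikov2014valued} goes through once their Lemma~4.2 is replaced by \cref{lemma-nip}; your write-up unpacks the combinatorial extraction step, but the content is identical (modulo the minor discrepancy that you cite Theorem~4.4 rather than Theorem~4.1 of \cite{chernikov2014valued}).
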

\begin{proof}
    The proof of \cite[Theorem 4.1]{chernikov2014valued} can be followed verbatim, once one replaces their Lemma 4.2 with \cref{lemma-nip}. Note that while they work in the multiplicative case, this is really not necessary, as the same results hold in the general case by \cite{onay2013quantifier}.
\end{proof}
    
    One can prove the same result in $\Lsep$ and $\Lseclift$:
    
\begin{theorem}\label{ntp2-rv}
    Suppose $\model{K}$ is a model of $\WTHzero$. Then, $\model{K}$ is $\mathrm{NTP}_2$ in $\Lsep$ if and only if $(\RV_K,\Rv{\sigma})$ is $\mathrm{NTP}_2$ in $\L_{\RVsort}$.
\end{theorem}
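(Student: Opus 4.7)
The plan is to follow the strategy of \cref{ntp2transfer} essentially verbatim, but replacing the role of the pair of stably embedded sorts $(\residuesort,\valuesort)$ by the single stably embedded sort $\RVsort$ with its induced language $\L_\RVsort$. For the easy direction, $(\RV_K,\Rv{\sigma})$ is stably embedded in $\model{K}$ with induced structure $\L_\RVsort$, as a consequence of \cref{relative-qe}, hence NTP$_2$ descends to this stably embedded definable set.

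For the nontrivial direction, I would recycle the Chernikov--Hils argument from \cite[Theorem 4.1]{chernikov2014valued}, combining three inputs. First, the resplendent relative quantifier elimination of \cref{relative-qe} allows us to reduce any $\Lsep$-formula modulo $\WTHzero$ to one in which all quantifiers range over $\RVsort$, so that by the assumed NTP$_2$ of $(\RV_K,\Rv{\sigma})$ no array witnessing TP$_2$ can be assembled from the $\RV$-part alone. Second, one needs a NIP lemma for quantifier-free $\Lsep$-formulas modulo $\Tvdfzero$, the $\Lsep$-analogue of \cref{lemma-nip}: a quantifier-free $\Lsep$-formula $\varphi(x,y)$ on $\mainsort$-variables can be rewritten as
\[
\psi(x,\sigma(x),\dots,\sigma^n(x),y,\sigma(y),\dots,\sigma^n(y))
\]
where $\psi$ is a quantifier-free formula in the reduct of $\LDPlambda$ without the $\ac$ symbol, with $P$ interpreted as $\sigma(K)$. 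By \cref{embed-in-acvf} we embed into a model of $\denseacvf$, whose NIP is \cref{acvfpairisnip}; since $\Lsep$ sees less than $\Lsepac$ at the level of the valued field (no angular component), this reduction is strictly easier than that of \cref{lemma-nip}. Third, the stable embeddedness of $\RV$ in the induced language $\L_\RVsort$ is what lets one cleanly separate the $\mainsort$-part (handled by the NIP lemma) from the $\RVsort$-part (handled by the NTP$_2$ hypothesis) in the Chernikov--Hils array-manipulation.

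The main obstacle is not really conceptual but organizational: one must check that the Chernikov--Hils argument, which is phrased as a transfer from the product $(\residuesort,\valuesort)$, goes through with a single sort $\RVsort$ that sits in a nontrivial extension $1 \to \residue{K}^\times \to \RV_K^\times \to \valuegroup{K} \to 0$. This is handled by the same stable embeddedness and resplendent relative QE used in \cref{ntp2transfer}, with $\RV$ playing the role of the pair $(k,\Gamma)$ throughout; no new combinatorial input on TP$_2$ patterns is required. The same template also gives the corresponding statement in $\Lseclift$ once one replaces \cref{relative-qe} by \cref{relative-qe-sl}.
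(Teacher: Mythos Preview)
Your proposal is essentially correct and follows the same route the paper indicates (the paper gives no explicit proof for \cref{ntp2-rv}, merely asserting that the argument of \cref{ntp2transfer} goes through). One small imprecision: when you say that a quantifier-free $\Lsep$-formula on $\mainsort$-variables can be rewritten as a quantifier-free formula ``in the reduct of $\LDPlambda$ without the $\ac$ symbol'', this is not literally true, since such a formula may involve $\rv$-terms landing in $\RVsort$, which is not a sort of $\LDPlambda$. The fix is the obvious one you are implicitly using: $\RV$ is an interpretable (imaginary) sort in any valued field, so adding it to $\denseacvf$ preserves NIP, and the analogue of \cref{lemma-nip} follows exactly as before via \cref{embed-in-acvf} and \cref{acvfpairisnip}. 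With that adjustment your outline is complete.
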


\begin{theorem}\label{ntp2-sl}
    Suppose $\model{K}$ is a model of $\WTHzerosl$. Then, $\model{K}$ is $\mathrm{NTP}_2$ in $\Lseclift$ if and only if $(\residue{K},\Res{\sigma})$ is $\mathrm{NTP}_2$ in $\Lringsigma$ and $(\valuegroup{K},\Val{\sigma})$ is $\mathrm{NTP}_2$ in $\Loagsigma$.
\end{theorem}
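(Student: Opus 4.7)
The plan is to adapt the proof of \cref{ntp2transfer} (equivalently, to repeat the proof of \cref{ntp2-rv}) to the language $\Lseclift$. The forward direction is immediate: $(\residue{K},\Res{\sigma})$ and $(\valuegroup{K},\Val{\sigma})$ are interpretable in $\model{K}$, so if $\model{K}$ is $\mathrm{NTP}_2$ then so are they. For the converse, one aims to mirror the argument of \cite[Theorem 4.1]{chernikov2014valued}, which needs two ingredients: resplendent relative quantifier elimination down to the residue field and value group sorts (this is \cref{relative-qe-sl}), and the statement that every quantifier-free $\Lseclift$-formula is $\mathrm{NIP}$ modulo $\Tvdfzero$ augmented by axioms making $s$ a section of the valuation and $\iota$ a lift of the residue field.

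Next, I would prove this quantifier-free $\mathrm{NIP}$ lemma by adapting the proof of \cref{lemma-nip}. Given a quantifier-free $\Lseclift$-formula $\varphi(x,y)$, I would first rewrite it, as in the proof of \cref{lemma-nip}, as a quantifier-free formula $\psi$ in the variables $(x,\sigma(x),\dots,\sigma^n(x), y, \sigma(y),\dots,\sigma^n(y))$ together with applications of $s$ and $\iota$, and interpret the underlying structure as a dense pair $(K,\sigma(K))$ of algebraically closed valued fields enriched by a section of the valuation and a lift of the residue field. The task then reduces to showing that this enriched theory, call it $\denseacvf^{s,\iota}$, is $\mathrm{NIP}$; for then I embed models of the relevant $\Lseclift^\lambda$-pair theory in models of $\denseacvf^{s,\iota}$ exactly as in \cref{embed-in-acvf} (choosing generics in each empty ball of the predicate, with the section and lift extended by density), and conclude.

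To show $\denseacvf^{s,\iota}$ is $\mathrm{NIP}$, I would repeat the coheir-counting argument of \cref{acvfpairisnip}. Over a model $M$, a type of an extension $N$ is determined, via the stable embeddedness of the residue field and value group in $\denseacvf$, by its $\LDPlambda$-reduct plus the images of the section on $\valuegroup{N}$ and of the lift on $\residue{N}$; but these images are themselves controlled by the $\L_{\mathrm{oag}}$- and $\L_{\mathrm{ring}}$-types of $\valuegroup{N}/\valuegroup{M}$ and $\residue{N}/\residue{M}$, each of which is $\mathrm{NIP}$ and hence admits at most $2^{|M|}$ global coheirs. Combining with the uniqueness of global coheirs for the underlying stable pair theory (as in \cref{acvfpairisnip}), one gets at most $2^{|M|}$ global coheirs overall, which is the bound characterizing $\mathrm{NIP}$.

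The main obstacle is this last coheir count in $\denseacvf^{s,\iota}$: one must genuinely verify that the section and the lift do not create new independence patterns beyond what is already detected in the residue and value sorts. I expect this to go through cleanly because $s$ and $\iota$ are unary functions whose image lies in sorts that are stably embedded and $\mathrm{NIP}$ in $\denseacvf$; alternatively, one can expand directly to a model of $\WTHzeroac$ by setting $\ac(x) = \res(x/s(\val(x)))$, reducing to \cref{ntp2transfer}, and then observe that knowing the full $\Lsepac$-structure plus $s$ and $\iota$ only adds definable data relative to the residue and value group sorts, so the $\mathrm{NTP}_2$ transfer from those sorts to $\model{K}$ is preserved. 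Either route concludes the proof once combined with \cref{relative-qe-sl}.
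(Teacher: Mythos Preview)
Your proposal is correct and matches the paper's intended approach—the paper itself offers no proof beyond the remark preceding \cref{ntp2-rv} and \cref{ntp2-sl} that ``one can prove the same result'' as in \cref{ntp2transfer}. Your primary route (relative quantifier elimination from \cref{relative-qe-sl} together with a quantifier-free $\mathrm{NIP}$ lemma established via $\denseacvf^{s,\iota}$) is exactly the right adaptation, and your coheir-counting sketch is sound once one knows that $\mathrm{ACVF}$ in equicharacteristic zero with a section and a lift is $\mathrm{NIP}$ (which follows from the section--lift analogue of Pas' relative quantifier elimination, reducing to the $\mathrm{NIP}$ sorts $\residuesort$ and $\valuesort$). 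The embedding step analogous to \cref{embed-in-acvf} also goes through: adding a generic of a ball is a Gauss extension, so neither $\residue{K}$ nor $\valuegroup{K}$ changes and $s,\iota$ extend trivially; passing to algebraic closures only requires extending $s$ to the divisible hull and $\iota$ to the algebraic closure of the residue field, both unproblematic in characteristic zero.

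One caveat on your alternative route: defining $\ac$ from $s$ and invoking \cref{ntp2transfer} does not suffice on its own, because $s$ and $\iota$ are maps \emph{into} the main sort and genuinely enrich it (the image of $s$ is a new definable subgroup of $K^\times$, and the image of $\iota$ a new definable subfield). This is not an enrichment of the auxiliary sorts alone, so resplendency of \cref{relative-qe-ac} does not apply directly. You therefore still need quantifier-free $\mathrm{NIP}$ in the richer language—which is precisely what your first route establishes. Stick with that route.
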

\bibliographystyle{alpha}
\bibliography{biblio}
\end{document}